\tikzstyle{vertex}=[circle,draw=black,fill=black,inner sep=0,minimum size=5pt,text=white,font=\footnotesize]
\renewenvironment{proof}[1][\proofname] {\par\pushQED{\qed}\normalfont\topsep6\p@\@plus6\p@\relax\trivlist\item[\hskip\labelsep\bfseries#1\@addpunct{.}]\ignorespaces}{\popQED\endtrivlist\@endpefalse}
\newtheorem{theorem}{\bf Theorem}[section]
\newtheorem{lemma}[theorem]{\bf Lemma}
\newtheorem{claim}[theorem]{\bf Claim}
\newtheorem{corollary}[theorem]{\bf Corollary}
\newtheorem{proposition}[theorem]{\bf Proposition}
\newtheorem*{theorem*}{\bf Theorem}
\theoremstyle{definition}
\def\eps{\varepsilon}
\def\cC{\mathcal{C}}
\def\cG{\mathcal{G}}
\def\cH{\mathcal{H}}
\def\cI{\mathcal{I}}
\def\cP{\mathcal{P}}
\def\ex{\mathrm{ex}}
\def\bE{\mathbb{E}}
\def\fB{\mathfrak{B}}
\newcommand{\hide}[1]{}
\newcommand{\ind}{\text{-ind}}
\title{\vspace{-0.9cm} K\H{o}v\'ari-S\'os-Tur\'an theorem for hereditary families}
\author{Zach Hunter\thanks{ETH Zurich, e-mail: \textbf{\{zach.hunter, aleksa.milojevic, benjamin.sudakov\}@math.ethz.ch}. Research supported in part by SNSF grant 200021-228014.}, Aleksa Milojevi\'c\footnotemark[1], Benny Sudakov\footnotemark[1], Istv\'an Tomon\thanks{Ume\r{a} University, \emph{e-mail}: \textbf{istvan.tomon@umu.se}. Research supported in part by the Swedish Research Council grant VR 2023-03375.}}
\date{}
\begin{document}

\maketitle

\begin{abstract}
    The celebrated K\H{o}v\'ari-S\'os-Tur\'an theorem states that any $n$-vertex graph containing no copy of the complete bipartite graph $K_{s,s}$ has at most $O_s(n^{2-1/s})$ edges. In the past two decades, motivated by the applications in discrete geometry and structural graph theory, a number of results demonstrated that this bound can be greatly improved if the graph satisfies certain structural restrictions. We propose the systematic study of this phenomenon, and state the conjecture that if $H$ is a bipartite graph, then an induced $H$-free and $K_{s,s}$-free graph cannot have much more edges than an $H$-free graph. We provide evidence for this conjecture by considering trees, cycles, the cube graph, and bipartite graphs with degrees bounded by $k$ on one side, obtaining in all the cases similar bounds as in the non-induced setting. Our results also have applications to the Erd\H{o}s-Hajnal conjecture, the problem of finding induced $C_4$-free subgraphs with large degree and bounding the average degree of $K_{s, s}$-free graphs which do not contain induced subdivisions of a fixed graph.
\end{abstract}

\section{Introduction}

The goal of this paper is to combine two classical areas of graph theory: Tur\'an problems and the study of graphs with forbidden induced subgraphs. The \emph{extremal number} or \emph{Tur\'an number} of a graph $H$ is the maximum number of edges in an $n$-vertex graph containing no copy of $H$ as a subgraph, and it  is denoted by  $\ex(n,H)$. The study of extremal numbers goes back more than a 100 years to Mantel \cite{Mantel}, who determined the extremal number of the triangle. This was extended by Tur\'an \cite{T41}, who found the extremal number of every clique. By the Erd\H{o}s-Stone-Simonovits theorem \cite{ES66,ES46}, we know the extremal number of every non-bipartite graph $H$ up to lower order terms.

The extremal numbers of bipartite graphs are much more mysterious, with a plethora of results addressing specific instances, and with just as many open problems. One of the most notorious problems is to determine the extremal number of $K_{s,t}$, i.e. the complete bipartite graph with vertex classes of size $s$ and $t$. The celebrated K\H{o}v\'ari-S\'os-Tur\'an  theorem \cite{KST54} states that if $s\leq t$, then $\ex(n,K_{s,t})=O_t(n^{2-1/s})$. This is only known to be tight if $s\in \{2,3\}$, or $t$ is sufficiently large with respect to $s$ \cite{ARS99,Bukh}. On the other hand, the random deletion method shows that $\ex(n,K_{s,s})=\Omega_s(n^{2-2/(s+1)})$.

Another classical topic of graph theory is the study of graphs avoiding a fixed graph $H$ as an induced subgraph. One such problem closely related to the topic of this paper is the Gy\'arf\'as-Sumner conjecture \cite{Gyarfas,Sumner}. A family of graphs $\mathcal{G}$ is \emph{$\chi$-bounded} if there exists a function $f$ such that $\chi(G)\leq f(\omega(G))$ for every $G\in \mathcal{G}$, where $\chi(G)$ denotes the chromatic number, and $\omega(G)$ the clique number. In this case, we say that $f$ is a \emph{$\chi$-bounding function} for $\mathcal{G}$. The Gy\'arf\'as-Sumner conjecture states that if $T$ is a tree, then the family of graphs avoiding $T$ as an induced subgraph is $\chi$-bounded. An additional classical problem of interest about graphs avoiding a fixed induced subgraph is the Erd\H{o}s-Hajnal conjecture \cite{EH}. This conjecture states that if $H$ is a graph, then any induced $H$-free $n$-vertex graph contains either a clique or an independent set of size at least $n^{c}$ for some $c=c(H)>0$.

Here, we consider the problem of finding the maximum number of edges in a $K_{s,s}$-free graph, assuming the host graph satisfies certain further structural restrictions. To this end, given a family of graphs $\mathcal{G}$, let $\ex_{\mathcal{G}}(n,s)$ denote the maximum number of edges of an $n$-vertex member of $\mathcal{G}$ which contains no copy of $K_{s,s}$.  In the past two decades, the function $\ex_{\mathcal{G}}(n,s)$ has been extensively studied for various natural (typically hereditary) families $\mathcal{G}$. In each case, it has been observed that the trivial bound $\ex_{\mathcal{G}}(n,s)\leq \ex(n,K_{s,s})=O_s(n^{2-1/s})$ can be significantly improved. The study of these problems developed both in structural graph theory and combinatorial geometry, seemingly independently from each other. One goal of our manuscript is to provide a systematic study of  $\ex_{\mathcal{G}}(n,s)$, and to unite these two areas. Let us survey the known results.

\medskip

\noindent
\textbf{Structural graph theory.} Given a graph $H$, or a family of graphs $\mathcal{H}$, we are interested in the family $\mathcal{G}$ defined as the family of all graphs containing no induced copy of a member of $\mathcal{H}$. In this case, let us write $\ex^{*}(n,\mathcal{H},s):=\ex_{\mathcal{G}}(n,s)$, and simply $\ex^{*}(n,H,s)$ if $\mathcal{H}=\{H\}$. It was proved by K\"uhn and Osthus \cite{KO04} that if $\mathcal{H}$ is the family of  all subdivisions of a fixed graph $H$, then $\ex^*(n,\mathcal{H},s)=O_s(n)$. The constant hidden in the $O_s(.)$ notation was recently improved by Du, Gir\~ao, Hunter, McCarty and Scott \cite{DGHMS23}. Bonamy et al. \cite{BBPRTW} showed that $\ex^{*}(n,P_t,s)=s^{O_t(1)}n$ and $\ex^{*}(n,\mathcal{C}_{\geq t},s)=s^{O_t(1)}n$, where $P_t$ is the path of length $t$ and $\mathcal{C}_{\geq t}$ is the family of cycles of length at least $t$. A common strengthening of the previous results is proved independently by  Gir\~ao and Hunter \cite{GH23} and Bourneuf, Buci\'c, Cook and Davies \cite{BBCD23}: if $\mathcal{H}$ is the family of subdivisions of a graph $H$, then $\ex^*(n,\mathcal{H},s)\leq s^{O_{H}(1)}n$. As another generalization of the result on paths, Scott, Seymour and Spirkl \cite{SSS}, improving a previous unpublished result of R\"odl, proved that for every tree $T$, one has  $\ex^{*}(n,T,s)=s^{O_T(1)}n$, where the exponent of $s$  has the order of magnitude $|T|^{\Omega(|T|)}$.

These results are partially motivated by the above described Gy\'arf\'as-Sumner conjecture, which says that the family of graphs avoiding an induced copy of a fixed tree is $\chi$-bounded.
Gy\'arf\'as \cite{Gyarfas} proved that the conjecture is true for every path, while Kierstead and Penrice \cite{KP94} proved it if $T$ is a tree of radius two. In general, Scott \cite{Scott97} showed that if $T$ is a tree, then the family of graphs avoiding all induced subdivisions of $T$ is $\chi$-bounded. Scott \cite{Scott97} also proposed the conjecture that this holds if we forbid all subdivisions of a given graph $H$, however, this is disproved by Pawlik et al. \cite{PaKK14}. The \emph{polynomial Gy\'arf\'as-Sumner conjecture} states that one can also find a polynomial $\chi$-bounding function for families avoiding induced copies of a tree $T$. This conjecture is open even for paths with at least 5 vertices.

Observe that forbidding $K_{s,s}$ can be thought of as a relaxation of the condition that $\omega(G)\leq s$, which is equivalent to forbidding the complete graph $K_{s+1}$. Thus, considering $K_{s, s}$-free graphs with no induced copies of $H$ is an natural intermediate step in proving the Gy\'arf\'as-Sumner conjecture for some specific trees $H$, see e.g. \cite{KP94, KZ04}. If $\mathcal{G}$ is a hereditary family of graphs (i.e. a family of graphs closed under taking induced subgraphs), then the inequality $\ex_{\mathcal{G}}(n,s)\leq c(s) n$ implies that the $K_{s,s}$-free members of $\mathcal{G}$ are $2c(s)$-degenerate (i.e. every subgraph has a vertex of degree at most $2c(s)$). This implies that $K_{s,s}$-free members of $\mathcal{G}$ have chromatic number at most $2c(s)+1$. In particular, the results described above show that the family of graphs containing no $K_{s,s}$ and no induced subdivision of a given graph $H$ have chromatic number at most $s^{O_H(1)}$. 

 Gir\~ao and Hunter \cite{GH23} and Bourneuf, Buci\'c, Cook and Davies \cite{BBCD23} also consider the family of graphs avoiding an induced copy of a bipartite graph $H$. Namely, they show that for every $H$, there exists  $\eps_H>0$ such that $\ex^*(n, H, s)\leq O_{s, H}(n^{2-\eps_H})$. Bourneuf, Buci\'c, Cook and Davies \cite{BBCD23} find $\eps_H$ that is exponential in the size of $H$, while Gir\~ao and Hunter \cite{GH23} show that $\eps_H$ can be taken to be $\frac{1}{100\Delta(H)}$, where $\Delta(H)$ is the maximum degree of $H$. 

\medskip

\noindent
\textbf{Combinatorial geometry.} In this area we are interested in the following types of graphs. The \emph{intersection graph} of a family $\mathcal{F}$ is the graph on vertex set $\mathcal{F}$, where two sets are joined by an edge if they have a nonempty intersection. The \emph{bipartite intersection graph} of two families $\mathcal{A}$ and $\mathcal{B}$ is the bipartite graph with vertex classes $\mathcal{A}$ and $\mathcal{B}$, with edges joining $A\in \mathcal{A}$ and $B\in\mathcal{B}$ if $A\cap B\neq \emptyset$. The \emph{incidence graph} of a set $X$ and a family of sets $\mathcal{F}$ is the bipartite graph with vertex classes $X$ and $\mathcal{F}$, where $x\in X$ is joined to $A\in\mathcal{F}$ if $x\in A$.

A \emph{curve} in the plane is the image of a continuous function $\phi:[0,1]\rightarrow \mathbb{R}^2$, and a \emph{string graph} is an intersection graph of a collection of curves in the plane. These graphs are extensively studied both in computational and combinatorial geometry. It is well known that string graphs avoid induced proper subdivisions of $K_5$, where a subdivision is proper if every edge is subdivided at least once. This immediately implies that $n$-vertex $K_{s,s}$-free string graphs have at most $O_s(n)$ edges by the results mentioned above \cite{BBCD23,DGHMS23,GH23,KO04}. The optimal bound $O(s(\log s)n)$ is obtained by Lee \cite{Lee}, building on ideas of Fox and Pach \cite{FP08}.

Chan and Har-Peled \cite{CH23} considered incidence graphs of $n$ points and $n$ pseudo-discs in the plane. A family of simple closed Jordan regions is a family of \emph{pseudo-discs} if the boundary of any two intersect in at most two points. They proved that if such an incidence graph is $K_{s,s}$-free, then it has $O_s(n\log\log n)$ edges. This was strengthened by Keller and Smorodinsky \cite{KS23} who proved that if $\mathcal{A}$ and $\mathcal{B}$ are families of $n$ pseudo-disks, then the bipartite intersection graph of $\mathcal{A}$ and $\mathcal{B}$ has at most $O(s^6 n)$ edges, assuming it is $K_{s,s}$-free. We highlight that by a result Keszegh \cite{Keszegh}, such graphs contain no induced proper subdivision of a non-planar graph, in particular no induced proper subdivision of $K_5$. Hence, one immediately gets the bound $s^{O(1)}n$ by \cite{BBCD23,GH23}. The study of the family $\mathcal{G}_{\text{box}}$ of incidence graphs of points and axis-parallel boxes in $\mathbb{R}^d$ was initiated by Basit, Chernikov, Starchenko, Tao and Tran \cite{BCSTT}. Chan and Har-Peled \cite{CH23} determined the optimal bound $\ex_{\mathcal{G}_{\text{box}}}(n,s)=O_s(n(\frac{\log n}{\log\log n})^{d-1})$. Furthermore, Tomon and Zakharov \cite{TZ} study the closely related family of intersection graphs of axis-parallel boxes in $\mathbb{R}^d$, see also \cite{KS23}. 

As a high-dimensional generalization of the Szemer\'edi-Trotter theorem \cite{SzT}, Chazelle \cite{Ch93} proposed to study $K_{s,s}$-free incidence graphs of points and hyperplanes in $\mathbb{R}^d$. In this context, forbidding $K_{s, s}$ in the incidence graph is a natural nondegeneracy condition, since otherwise all points may lie on a line which is contained in all hyperplanes, in which case no nontrivial upper bound on the number of incidences can be given. For $d\geq 3$, the currently best known upper bound on the number of edges in a $K_{s,s}$-free incidence graph of $n$ points and $n$ hyperplanes is $O_{d,s}(n^{2-2/(d+1)})$, proved by Apfelbaum and Sharir \cite{AS07}, while the best known lower bound is due to Sudakov and Tomon \cite{ST23}. In \cite{MST}, Milojevi\'c, Sudakov and Tomon show that over any field $\mathbb{F}$, a $K_{s,s}$-free incidence graph of $n$ points and $n$ hyperplanes has at most  $O_{d,s}(n^{2-\frac{1}{\lceil (d+1)/2\rceil}})$ edges, and this bound is the best possible. The proof of this result proceeds by considering $\ex^*(n,\mathcal{H},s)$ for a simple finite family of bipartite graphs $\mathcal{H}$ avoided by point-hyperplane incidence graphs. In another direction, Fox, Pach, Scheffer, Suk and Zahl \cite{FPSSZ} proved that the bound of Apfelbaum and Sharir \cite{AS07} also holds (up to an $o(1)$ error term in the exponent) for $K_{s,s}$-free semialgebraic graphs in dimension $d$ as well (we refer the interested reader to \cite{FPSSZ} for precise definitions). One of their key tools is a bound on the number of edges in a $K_{s,s}$-free graph of VC-dimension at most $d$. Here, a graph has VC-dimension at most $d$ if it contains no set $A$ of $d+1$ vertices such that for every $X\subset A$ there is a vertex joined to every vertex in $X$, but no vertex in $A\setminus X$. In \cite{FPSSZ}, it is proved that a $K_{s,s}$-free graph of VC-dimension at most $d$ has at most $O_{d,s}(n^{2-1/d})$ edges, see also \cite{KS23} for an alternative proof. For $d\geq 3$, this was improved to $o(n^{2-1/d})$ by Janzer and Pohoata~\cite{JP}. For a generalization of this result to an even wider class of graphs by Axenovich and Zimmermann, see \cite{AZ}. 

\medskip

\noindent
\textbf{Further related work.} Loh, Tait, Timmons and Zhou \cite{LTTZ18} have defined the notion of \emph{induced Tur\'an number} for graphs $H$ and $F$. This is the maximum number of edges in an $n$ vertex graph $G$ containing no induced copy of $F$ and no (not necessarily induced) copy of $H$, and is denoted by $\ex(n, \{H, F\ind\})$. In \cite{LTTZ18}, they considered  $H=K_r$ and $F=K_{s, t}$, obtaining the bound $e(G)\leq O_{r, s, t}(n^{2-1/s})$. They have also considered the cases when $F=K_{2, t}$ and $H$ is either an odd cycle or a complete graph. Their work was later extended by Illingworth (\cite{Illingworth21}, \cite{Illingworth21a}) and Ergemlidze, Gy\H{o}ri and Methuku \cite{EGM19}. These results are quite different from ours, since we consider hereditary families in which induced copies of an arbitrary bipartite graph $F$ are forbidden, instead of just considering cases when $F$ is a complete bipartite graph.

\subsection{Main results}
In this section, we present our results. Recall that $\ex^{*}(n,H,s)$ denotes the maximum number of edges in an $n$-vertex graph containing no $K_{s,s}$ and no induced copy of $H$. We consider $\ex^{*}(n,H,s)$ for various bipartite graphs $H$. Note that the case when $H$ is not bipartite is not very interesting: as every graph contains a bipartite subgraph with at least half of the edges, we have $\ex^{*}(n,H,s)=\Theta(\ex(n,K_{s,s}))$. In the case where 
$H$ is bipartite, we believe that for sufficiently large $s$, $\ex^*(n,H,s)$ is close to the extremal number of $H$. In particular, we propose the following conjecture.

\begin{restatable}{conjecture}{mainconjecture}\label{conj:mainconjecture}
For every connected bipartite graph $H$, 
$$\ex^{*}(n,H,s)\leq C_H(s)\cdot \ex(n,H)$$
for some $C_H(s)$ depending only on $H$ and $s$.    
\end{restatable}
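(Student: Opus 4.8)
The plan is to prove the contrapositive in strong quantitative form: every $n$-vertex $K_{s,s}$-free graph $G$ with $e(G) > C_H(s)\cdot \ex(n,H)$ contains an induced copy of $H$. Since the non-bipartite case is immediate from the remark above, assume $H$ is bipartite with parts $A$ and $B$. First I would regularize: repeatedly deleting vertices of small degree yields an \emph{induced} subgraph $G'$ whose minimum degree is still of order $e(G)/n$, and an induced copy of $H$ in $G'$ is induced in $G$, so after enlarging $C_H(s)$ we may assume $G$ itself has large minimum degree. Then supersaturation provides a positive-density family of (non-induced) subgraph copies of $H$ in $G$. General-purpose theorems already give that such $G$ has only $O_{s,H}(n^{2-\eps_H})$ edges, but the exponent there is far from the truth; the entire content of the conjecture is to match the exponent of $\ex(n,H)$ exactly, which forces one to exploit the fine structure of $H$ rather than merely its bipartiteness.

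The mechanism I would use to turn a copy of $H$ into an induced one is a local cleaning and greedy embedding argument in which $K_{s,s}$-freeness is read as a bound on common neighbourhoods. One builds the embedding of $H$ vertex by vertex, or one block of a natural decomposition of $H$ at a time; when placing the image of a vertex $v$, the admissible set must avoid every already-used vertex and every vertex adjacent to the placed image of a non-neighbour of $v$ in $H$. The use of $K_{s,s}$-freeness is that, once at least $s$ images have been placed, no pool of $n^{o(1)}$ candidates can all be joined to $s$ of them, so the "forbidden by a chord" sets stay small as long as the candidate pool stays large. Dependent random choice is the standard device for producing a large set $U$ in which every $O_H(1)$-subset has many common neighbours, into which one side of $H$ can be embedded while all cross-chords remain avoidable; the multiplicative loss incurred in these steps is what yields the factor $C_H(s)$, which one expects to be polynomial in $s$ with exponent governed by the degree structure of $H$.

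The main obstacle is precisely the control of chords between the two parts $A$ and $B$ when $H$ has unbounded degree: to embed a vertex $a\in A$ of large degree one must simultaneously avoid chords to many already-placed vertices of $B$, and the union of their neighbourhoods need not be small — $K_{s,s}$-freeness limits $s$-fold overlaps but not arbitrary ones — so the dependent-random-choice step no longer suffices. This is exactly why the conjecture is established here only for special $H$, each of which sidesteps the issue through extra structure: trees can be embedded ``freely'' branch by branch, so that no single vertex is ever constrained by many others at once; for an even cycle a chord produces a strictly shorter cycle, enabling an extremal-number bootstrap; the cube is handled through its recursive product structure; and bipartite $H$ with bounded degree on one side is the regime where the dependent-random-choice approach can be made to work. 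A proof of the full conjecture would need a way to schedule the embedding of an arbitrary bipartite $H$ so that cross-chord constraints are always spread thinly enough to be absorbed, and I expect this to require genuinely new structural insight into $K_{s,s}$-free graphs rather than a refinement of the above template.
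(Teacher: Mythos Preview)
The statement you were asked to prove is a \emph{conjecture}: the paper explicitly proposes it as open and offers no proof, only evidence via the special cases in Theorems~\ref{thm:bounded degree}--\ref{thm:cycles} and Proposition~\ref{prop:cube}. Your write-up is not a proof either, and you are candid about this in the final paragraph; what you have given is an informed essay on why the dependent-random-choice template succeeds for bounded-degree-on-one-side $H$ and breaks down in general. That assessment is accurate and in line with the paper's own discussion.

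A few of your one-line summaries of the paper's special-case arguments are off. Trees are \emph{not} handled by a direct branch-by-branch greedy embedding; the paper instead invokes a dichotomy (Proposition~\ref{prop:GHblackbox}) that produces either a $C_4$-free induced subgraph of large average degree, where the tree embeds easily (Proposition~\ref{prop:trees base bound}), or a nearly-quadratic-density induced subgraph, where the bounded-degree result (Theorem~\ref{thm:bounded deg variant}) finishes. The even-cycle case does not proceed by ``a chord gives a shorter cycle and bootstrap on extremal numbers''; it passes to an almost-regular subgraph, uses Janzer's count of degenerate homomorphic cycles, and then argues that if no $2k$-cycle is induced then many pairs of internally disjoint $k$-paths have a chord between them, which forces a dense piece violating K\H{o}v\'ari--S\'os--Tur\'an. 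The cube is not handled via any recursive product structure; the paper fixes two non-adjacent vertices with many induced length-$3$ paths between them and then applies the cycle result with $k=3$ to the two neighbour-sets. Your description of the bounded-degree case is the one that matches the paper most closely, though the paper's actual argument adds a nontrivial ``superspread hypergraph'' cleaning step (Section~\ref{sub:good independent}) on top of dependent random choice to get the exponent of $s$ linear in $|V(H)|$.
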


 This conjecture is so far consistent with all known results. We provide further evidence by considering several families of bipartite graphs for which the extremal numbers are extensively studied. First, we consider bipartite graphs $H=(A, B; E)$ in which all vertices in $B$ have degree at most $k$. Such bipartite graphs are of interest due to a celebrated result of F\"uredi \cite{Furedi} and Alon, Krivelevich, Sudakov \cite{AKS} which shows that $\ex(n,H)=O_{H}(n^{2-1/k})$. In general, this bound is also the best possible by taking $H=K_{k,t}$ for sufficiently large $t$. We show that a similar bound holds for $\ex^{*}(n,H,s)$ as well.

\begin{restatable}{theorem}{boundeddeg}\label{thm:bounded degree}
Let $H=(A, B; E)$ be a bipartite graph such that every vertex in $B$ has degree at most~$k$. Then, \[\ex^*(n, H, s)\leq (C_H s)^{4|V(H)|+10} n^{2-1/k},\] with $C_H=4|A||B|$.
\end{restatable}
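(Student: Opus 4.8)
I would prove the contrapositive: if $G$ is an $n$-vertex $K_{s,s}$-free graph with more than $(C_H s)^{4|V(H)|+10}n^{2-1/k}$ edges, then $G$ contains an induced copy of $H$. Write $p=|A|$, $q=|B|$, $h=|V(H)|$, and let $\mathrm{cn}(S)$ denote the set of common neighbours of a vertex set $S$. First I would pass to an induced subgraph of minimum degree at least $e(G)/|V(G)|$; being an induced subgraph it is still $K_{s,s}$-free and induced-$H$-free, and after renaming $G$ satisfies $\delta(G)\ge (C_H s)^{4|V(H)|+10}n^{1-1/k}$. Note also that $\omega(G)<2s$, since $K_{2s}\supseteq K_{s,s}$. (It is worth recording that the case $k\ge s$ is immediate: then by the K\H{o}v\'ari--S\'os--Tur\'an theorem $e(G)\le\ex(n,K_{s,s})=O_s(n^{2-1/s})\le O_s(n^{2-1/k})$, already contradicting the edge count; so one may assume $k\le s-1$.)

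Next I would build an independent, codegree-rich palette for the $A$-side. Using the minimum-degree bound, an application of dependent random choice with parameter $a=k$ produces a set $U$ in which every subset of size at most $k$ has at least $(C_Hs)^{4|V(H)|+10}$ common neighbours, and $|U|$ is a large polynomial in $s$. Passing to a subset of size $R(2s,h)$ preserves this codegree property while making all the common neighbourhoods in question much larger than the palette, and then Ramsey's theorem (using $\omega(G)<2s$) extracts an independent set $I\subseteq U$ with $|I|=h$. Take $p$ of its vertices, $\{v_a:a\in A\}$, to be the images of the $A$-vertices: they are pairwise non-adjacent, and for each $b\in B$ the target pattern $S_b:=\{v_a:a\in N_H(b)\}$ has size $\deg_H(b)\le k$ and at least $(C_Hs)^{4|V(H)|+10}$ common neighbours.

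It then remains to choose pairwise non-adjacent distinct vertices $w_b\in\mathrm{cn}(S_b)$, all outside $I$, with $N(w_b)\cap\{v_a:a\in A\}=S_b$ \emph{exactly} --- this, together with $\{v_a\}$, is an induced copy of $H$, and the exactness is the heart of the matter. The plan is to guarantee that each ``pattern class'' $T_b:=\mathrm{cn}(S_b)\setminus\bigcup_{a:\,v_a\notin S_b}N(v_a)$ is large, say $|T_b|\ge(C_Hs)^{2|V(H)|+5}$. When $k=s-1$ this drops out of the codegree lower bounds together with the $K_{s,s}$-free bound that any $s$ vertices with $s$ common neighbours span a $K_{s,s}$ (so the common neighbourhood of any $(k{+}1)$-subset of $I$ has fewer than $s$ vertices, whence $T_b$ loses at most $ps$ vertices compared with $\mathrm{cn}(S_b)$). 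For general $k\le s-1$ one has to choose the palette even more carefully, so that already enlarging a $k'$-subset with $k'<k$ shrinks its common neighbourhood by a large factor; arranging this refined ``codegree decay'' --- which must exploit $K_{s,s}$-freeness (and perhaps the induced-$H$-freeness) in a subtler way --- is the step I expect to be genuinely difficult. Once the classes $T_b$ are large, they are pairwise disjoint (they realise distinct neighbourhoods inside $I$), so, using $\omega(G)<2s$ once more, one greedily builds an independent set meeting each $T_b$ in at least $q$ vertices: at each step all but fewer than $s$ vertices of the partial set miss a $1/s$-fraction of the next $T_b$, so the total cost is only a factor $s^{O(h)}$. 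Distinct vertices of this independent set serve as the $w_b$, completing the induced copy of $H$.

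The exponent $4|V(H)|+10$ and the constant $C_H=4|A||B|$ are produced by the bookkeeping: regularisation, dependent random choice, the palette sparsification, and the two Ramsey/greedy extractions each cost a factor $(C_Hs)^{O(1)}$, and passing through the $|V(H)|$ vertices of $H$ multiplies these together. The one point that is not routine is the palette sparsification ensuring large pattern classes when $k\le s-1$; the rest of the argument is a combination of dependent random choice, the clique bound $\omega(G)<2s$, Ramsey's theorem, and the elementary observation that in a $K_{s,s}$-free bipartite graph all but fewer than $s$ vertices of one side avoid a positive fraction of the other side.
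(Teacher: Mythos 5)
Your high-level plan---dependent random choice to get a codegree-rich set, extraction of an independent palette $I$ of size $|A|$ for the $A$-vertices, and a greedy embedding of the $B$-vertices using the bound $\omega(G)<2s$ together with the observation that in a $K_{s,s}$-free graph only fewer than $s$ vertices cover more than a $(1-\tfrac{1}{2s})$-fraction of a given large set---matches the paper's three-step proof, and your closing greedy step is essentially Lemma~\ref{lemma:greedy}. However, the decisive middle step, namely producing an independent set $I$ whose \emph{pattern classes} $T_b=N(S_b)\setminus\bigcup_{a:\,v_a\notin S_b}N(v_a)$ are all large, is the technical heart of the theorem, and you leave it unresolved. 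Ramsey-extracting $I$ from the DRC set cannot work on its own: the DRC guarantee is only a lower bound on the size of the common neighbourhood of each $k$-subset, and gives no control whatsoever over how much of that common neighbourhood is covered by $N(v_a)$ for a vertex $v_a\in I\setminus S_b$. Your $k=s-1$ sketch is also incomplete even for that value of $k$: the $K_{s,s}$-free bound on the common neighbourhood of $(k{+}1)$-subsets of $I$ controls $T_b$ only when $|S_b|=k$ exactly, but vertices $b\in B$ with $\deg_H(b)<k$ produce smaller $S_b$, for which that argument says nothing.

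The paper bridges precisely this gap in Section~\ref{sub:good independent} with the superspread hypergraph machinery: rather than extract a single independent set and hope it has the decay property, one considers the whole hypergraph of independent $2|A|$-sets inside the DRC set, cleans it to an $(\eps,\delta)$-superspread hypergraph (Proposition~\ref{prop:hypergraph cleaning}), defines ``bad'' ordered tuples that witness a failure of codegree decay, and uses Claim~\ref{claim:simple} together with the superspread counting bound (Proposition~\ref{prop:spread counting}) to show that some independent set in the family avoids all bad tuples simultaneously. This averaging over a spread family of independent sets, rather than selecting one by Ramsey, is the idea you correctly flag as ``genuinely difficult'' and do not supply---and it is also what keeps the exponent of $s$ linear in $|V(H)|$ rather than quadratic.
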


\noindent
This result improves the above mentioned bounds of Gir\~ao and Hunter \cite{GH23} and Bourneuf, Buci\'c, Cook and Davies \cite{BBCD23}. The exponent $2-1/k$
is optimal, which can be seen by 
considering $H=K_{k,t}$ for sufficiently large $t$.

Next, we consider trees. It is an easy exercise to show that if $T$ is a tree, then $\ex(n,T)\leq |T|n$. As proved by Scott, Seymour and Spirkl \cite{SSS}, $\ex^*(n,T,s)$ also grows linearly as a function of $n$, and furthermore as a polynomial function of $s$. The exponent of $s$ in their result is super-exponential  in $|T|$, it is of the order $|T|^{\Omega(|T|)}$. One of our main results improves this to linear, i.e., $\ex^{*}(n,T,s)=s^{O(|T|)}n$. This is optimal up to the constant term hidden by $O(.)$ for every tree $T$.

\begin{theorem}\label{thm:trees}
There exists an absolute constant $C$ such that for every tree $T$ on $t$ vertices, and for every $s$ sufficiently large with respect to $t$, we have $$\ex^*(n, T, s)\leq s^{Ct} n.$$
\end{theorem}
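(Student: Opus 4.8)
The plan is to prove Theorem~\ref{thm:trees} by induction on the number of vertices $t$ of the tree $T$, using a rooted/layered decomposition of $T$ and a standard neighbourhood-blowup argument to force an induced copy. Suppose $G$ is an $n$-vertex graph with no $K_{s,s}$ and average degree at least $d = s^{Ct}$; passing to a subgraph, we may assume $G$ has minimum degree at least $d/2$ and (by a further density/regularization step) that all degrees lie within a constant factor. Fix a leaf $v$ of $T$ with neighbour $u$, so $T' = T - v$ is a tree on $t-1$ vertices, and by induction $G$ (or a large induced subgraph of it) contains many induced copies of $T'$. The difficulty is the \emph{induced} constraint: an arbitrary vertex $w$ adjacent to the image of $u$ may send edges back to the other $t-2$ vertices of the copy of $T'$, destroying inducedness. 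The key observation is that each of those $t-2$ ``forbidden'' vertices has degree at most $2d$ (we trimmed max degree), so they block at most $(t-2)\cdot 2d$ candidate extensions $w$, whereas the image of $u$ has at least $d/2$ neighbours; choosing the exponent $C$ large enough compared to $t$, there remain plenty of legal choices of $w$. To make ``many induced copies of $T'$'' survive the extra $K_{s,s}$-freeness bookkeeping, I would carry a quantitative version through the induction: $G$ contains at least $n^{t} / s^{C't \cdot t}$ (equivalently $\Omega_t(d^{t-1} n)$) induced copies of $T$.

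The main engine is the following greedy embedding lemma, proved by the usual ``dependent random choice''-free direct counting. Process the vertices of $T$ in a BFS order $w_1 = \text{root}, w_2, \dots, w_t$, so that each $w_i$ ($i\ge 2$) has a unique already-embedded neighbour $p(i)$. Having embedded $w_1,\dots,w_{i-1}$ to distinct vertices $x_1,\dots,x_{i-1}$ with the right adjacencies and \emph{no other adjacencies among them}, we must pick $x_i \in N(x_{p(i)})$ that is nonadjacent to all $x_j$ with $j\ne p(i)$ that are not supposed to be neighbours — but since $T$ is a tree, $w_i$ is supposed to be adjacent to \emph{only} $w_{p(i)}$ among $w_1,\dots,w_{i-1}$, so we need $x_i$ in $N(x_{p(i)})$ and outside $N(x_j)$ for all other $j<i$. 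The set $N(x_{p(i)})$ has size $\ge d/2$; each $N(x_j)$ we must avoid has size $\le 2d$, but — and this is the crucial point where $K_{s,s}$-freeness is used — $|N(x_{p(i)}) \cap N(x_j)| \le $ (something controlled): two vertices with $\ge s$ common neighbours is fine, but if \emph{many} previously chosen $x_j$ had large common neighbourhood with $x_{p(i)}$ we would build a $K_{s,s}$. Quantitatively, by Kővári–Sós–Turán applied inside the ball around $x_{p(i)}$, the number of vertices $y$ with $|N(y)\cap N(x_{p(i)})| \ge (d/2)/(2t)$ is at most $O_{s,t}(d^{s-1})$, which is negligible once $d = s^{Ct}$ with $C$ large; discarding those, every relevant $N(x_j)$ meets $N(x_{p(i)})$ in at most $d/(4t)$ vertices, so after removing the $\le t$ forbidden neighbourhoods at least $d/2 - t\cdot d/(4t) = d/4$ choices for $x_i$ remain. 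Multiplying the number of choices over $i=1,\dots,t$ gives $\ge (d/4)^{t-1}\cdot n / (\text{number of overlaps subtracted})$ embeddings, hence the claimed count of induced copies, and in particular at least one, which contradicts $T$-freeness; this establishes $\ex^*(n,T,s)\le s^{Ct} n$.

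The step I expect to be the genuine obstacle is the ``cleaning'' that justifies, at every stage of the greedy process, that only boundedly many previously embedded vertices can have large common neighbourhood with the current anchor $x_{p(i)}$ — i.e. controlling the overlap $|N(x_j)\cap N(x_{p(i)})|$ uniformly. A single application of Kővári–Sós–Turán bounds the number of high-codegree vertices \emph{globally}, but the greedy embedding chooses $x_{p(i)}$ adaptively, so one needs this to hold simultaneously for \emph{all} potential anchors. The fix is to do the cleaning up front: repeatedly delete any vertex that is a ``high-codegree centre'' for too many others, using that in a $K_{s,s}$-free graph the number of such centres is small, until the host graph is ``codegree-bounded'' — every pair of vertices at distance $\le 2$ has codegree at most $d/(4t)$ except for a sparse exceptional set one also deletes. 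One must check this deletion process removes only $o(nd)$ edges (so average degree stays $\ge d/2$), which is where the precise exponent $s^{Ct}$ — as opposed to merely $s^{\text{poly}(t)}$ — gets consumed: each of the $O(t)$ levels of cleaning costs a factor $s^{O(1)}$ in the degree, and preserving positivity of the final count through $t$ embedding steps costs another $s^{O(t)}$. Assembling these, one gets an absolute constant $C$ with $\ex^*(n,T,s)\le s^{Ct}n$ for all $s$ large in terms of $t$, matching the lower bound $\ex^*(n,T,s)=\Omega(s^{ct}n)$ coming from incidence-type constructions and so optimal up to the constant in the exponent.
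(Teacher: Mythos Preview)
Your approach is quite different from the paper's and has a genuine gap at precisely the point you yourself flag as the obstacle. The paper does \emph{not} attempt a direct greedy embedding of $T$. Instead it invokes a black-box dichotomy (Proposition~\ref{prop:GHblackbox}, taken from \cite{GH23}): any induced-$T$-free graph of large average degree contains an induced subgraph $G'$ that is either $C_4$-free with average degree $\ge 4t+1$, or has $n'\ge d^{1/10}$ vertices and $e(G')\ge (n')^{2-1/(4t)}$. The first alternative is disposed of by the trivial greedy argument (Proposition~\ref{prop:trees base bound}), and the second by Theorem~\ref{thm:bounded deg variant}(ii) applied with $k\le t-1$. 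The linear-in-$t$ exponent of $s$ is inherited entirely from Theorem~\ref{thm:bounded deg variant}, whose proof in turn relies on the superspread-hypergraph machinery of Section~\ref{sub:good independent} to produce a \emph{rich} independent set before any embedding is attempted.

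Your greedy plan runs into exactly the difficulty that this machinery is built to overcome, and your proposed fix does not work. First, the assertion that the number of vertices $y$ with $|N(y)\cap N(x_{p(i)})|\ge d/(4t)$ is ``at most $O_{s,t}(d^{s-1})$, which is negligible once $d=s^{Ct}$'' is doubly wrong: a correct application of K\H{o}v\'ari--S\'os--Tur\'an gives a bound of order $O_s(t)$, not $d^{s-1}$, and in any case $d^{s-1}$ \emph{grows} with $d$, so enlarging $d$ cannot make it negligible. More importantly, for $s\ge 3$ nothing in $K_{s,s}$-freeness prevents a \emph{single} pair of vertices from having codegree $d$, so one previously embedded $x_j$ may satisfy $N(x_j)\supseteq N(x_{p(i)})$ and kill the greedy step outright. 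Your ``delete high-codegree centres'' cleaning cannot repair this: the bad pairs number $\Theta(n)$ rather than $o(n)$, and the relevant bad configurations depend adaptively on the partial embedding (the set $W$ to which Claim~\ref{claim:simple} must be applied changes at every step). What is actually needed is that the specific tuple $(x_1,\dots,x_{i-1})$ avoids every such bad configuration simultaneously for every future anchor; arranging this is precisely the content of Propositions~\ref{prop:hypergraph cleaning}--\ref{prop:spread counting} and Proposition~\ref{prop:good independent sets}, which first build a large spread family of candidate independent sets and then show most of them dodge all bad-tuple families at once. Without that (or an equivalent device) the induction does not close.
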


Let us argue why this theorem is optimal by showing that $\ex^*(n, T, s)\geq s^{\Omega(t)}n $ for $s\gg t^2$. Consider the random graph on $N=s^{t/10}$ vertices, in which each edge is included with probability $1-s^{-1/2}$. It is a standard union bound argument to show that this graph contains no $K_{s,s}$, no independent set of size at least $t/2$ (which ensures that it is also induced $T$-free), and has at least $N^2/4$ edges with high probability. Taking $n/N$ disjoint copies of such a graph proves our lower bound. 
 
Next, we consider the cycle of length $2k$, denoted by $C_{2k}$. The classical result of Bondy and Simonovits \cite{BS74} states that $\ex(n,C_{2k})=O_k(n^{1+1/k})$, and this bound is known to be tight for $k\in \{2,3,5\}$ \cite{LUW}. We achieve a similar upper bound for $\ex^{*}(n,C_{2k},s)$ as well.

\begin{restatable}{theorem}{cycles}\label{thm:cycles}
Let $k,s\geq 2$ be integers, then there exists $C_{s,k}$ such that $$\ex^*(n, C_{2k}, s)\leq C_{s, k} n^{1+1/k}.$$ 
\end{restatable}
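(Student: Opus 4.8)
The plan is to follow the standard strategy for bounding $\ex(n,C_{2k})$ — namely, show that a $C_{2k}$-free graph cannot have too many edges by analyzing the growth of balls in a BFS tree — but adapted to the induced setting. The key point is that we are not forbidding $C_{2k}$ as a subgraph; we only forbid it as an \emph{induced} subgraph and also forbid $K_{s,s}$. So a long even cycle may exist, but it must always come with a chord. The main idea is that in a $K_{s,s}$-free graph, if short cycles are ``unavoidable'' in a BFS layering, then one can find a cycle of length exactly $2k$ that is \emph{induced}: chords of a shortest cycle through given edges/vertices can be controlled, and one repeatedly shortcuts across chords, using $K_{s,s}$-freeness to prevent the shortcutting from collapsing the cycle too far or creating too many alternative paths.

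Concretely, I would proceed as follows. First, pass to a subgraph of minimum degree $d = e(G)/n$ (standard), so it suffices to show that a $K_{s,s}$-free, induced-$C_{2k}$-free graph of minimum degree $d$ satisfies $d \le C_{s,k}$ for an appropriate constant; then $e(G) \le \tfrac12 d\, n \le C_{s,k} n^{1+1/k}$ follows by choosing the threshold $d \sim n^{1/k}$ as in Bondy–Simonovits. Second, fix a vertex $v$ and run BFS; let $L_0, L_1, \dots$ be the layers. In the ordinary Bondy–Simonovits argument one shows that consecutive layers cannot all grow by a factor close to $d$, because otherwise a ``theta graph'' (two internally disjoint paths of the same length between two vertices) appears, yielding a $C_{2k}$. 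Here the theta graph gives an even closed walk, hence an even cycle, but perhaps not of length exactly $2k$ and perhaps not induced. So the third and central step is a \emph{cleaning/cleaning lemma}: given a short even cycle (or theta subgraph) in a $K_{s,s}$-free graph, together with enough ``room'' around it (many vertices in nearby layers with many neighbors), one can locate an induced $C_{2k}$. The mechanism: take a shortest cycle $Z$ through a fixed pair of far-apart BFS-tree paths; any chord of $Z$ splits it into two shorter cycles, and by minimality and parity one controls the chord structure. Using $K_{s,s}$-freeness, the number of vertices outside $Z$ with many neighbors \emph{on} $Z$ is bounded, so after deleting a bounded number of vertices we obtain a large ``clean'' region in which the only edges among the relevant path are the path edges themselves — giving an induced path, which we then close up into an induced cycle of controlled length, and finally adjust the length to exactly $2k$ by rerouting through the BFS tree (whose branches, being tree paths, are induced and can be chosen of the needed parity and length because the layers are large).

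The hard part, as usual in these ``Turán-for-hereditary-families'' results, will be the cleaning step: controlling chords. In the purely extremal world one never worries about chords — any cycle of the right length is a win — but here a chord can both \emph{shorten} a cycle (potentially below $2k$) and, worse, \emph{destroy inducedness}. The $K_{s,s}$-freeness is exactly the tool that prevents a dense ``web'' of chords: it bounds the codegree of pairs and, more globally, the number of vertices that can see many points of a fixed small set. So I expect the core lemma to read something like: \emph{in a $K_{s,s}$-free graph, if there are two vertices joined by $t \ge t_0(s,k)$ internally disjoint paths of length $\le \ell$, then there is an induced cycle of length $2k$} — proved by a greedy/pigeonhole selection of $2k$ of the paths that avoid each other's neighborhoods, iterated $O(s)$ times to kill all cross-chords. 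Plugging the contrapositive back into the BFS growth argument caps the layer-growth ratio, hence $d$, hence $e(G)$, at $C_{s,k} n^{1+1/k}$. Tracking the dependence of $C_{s,k}$ on $s$ and $k$ through the pigeonhole iterations will be somewhat delicate but routine; the statement only claims existence of \emph{some} $C_{s,k}$, so I would not optimize it.
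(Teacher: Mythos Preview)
Your outline has the right endgame in sight --- many internally disjoint $k$-paths between two fixed vertices, then argue two of them give an induced $C_{2k}$ --- but several steps are genuine gaps rather than routine details.

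\textbf{Length control.} The BFS/Bondy--Simonovits layer argument does not hand you internally disjoint paths of length \emph{exactly} $k$ between two vertices; it gives a theta-graph whose two arms can have various lengths, and your ``adjust the length to exactly $2k$ by rerouting through the BFS tree'' is precisely the hard part of Bondy--Simonovits, not a postprocessing step. Your stated core lemma even allows paths of length $\le \ell$, from which two paths yield a cycle of some length $\le 2\ell$, not $C_{2k}$. (There is also a slip earlier: you want $d \le C_{s,k} n^{1/k}$, not $d \le C_{s,k}$.)

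\textbf{The cleaning step.} The proposed mechanism --- ``greedy/pigeonhole selection of $2k$ of the paths that avoid each other's neighbourhoods'' --- is not right. You need to select \emph{two} $k$-paths, not $2k$ of them, and $K_{s,s}$-freeness does not bound the number of neighbours a single vertex has on other paths, so a greedy avoidance argument does not get off the ground. What actually works is a \emph{density} argument: if no pair of disjoint $k$-paths between $u$ and $v$ is induced, then every such pair carries a chord, so the union of $r$ disjoint paths spans $\Theta(rk)$ vertices and $\Omega(r^2/k)$ edges, and for $r$ large enough this violates K\H{o}v\'ari--S\'os--Tur\'an. You also need $u,v$ non-adjacent and each path induced, which must be arranged separately.

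For comparison, the paper avoids BFS entirely. It passes to an almost-regular bipartite subgraph, counts homomorphic $2k$-cycles via Sidorenko, and uses a lemma of Janzer to show that almost all of them are non-degenerate. If no $C_{2k}$ is induced, every non-degenerate $2k$-cycle has a chord, and an averaging argument then locates vertices $u,v$ for which the family $\mathcal P_{u,v}$ of alternating $k$-paths is large, has few internally-intersecting pairs, and many chorded pairs. A short random-sampling lemma on an auxiliary graph (paths as vertices, ``intersect'' and ``have a chord'' as two edge colours) extracts $r \ge (64k)^{3s}$ internally disjoint paths with $\Omega(r^2/k)$ cross-chords, and K\H{o}v\'ari--S\'os--Tur\'an on the union gives the contradiction. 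The homomorphism-counting route is what delivers the exact length $k$ and the disjointness simultaneously; that is the piece your BFS sketch is missing.
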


Finally, we consider $Q_8$, the graph of the cube. That is, the vertices of $Q_{8}$ are $\{0,1\}^3$, and two vertices are connected by an edge if they differ in exactly one coordinate. Determining the order of $\ex(n,Q_8)$ is an old open problem of Erd\H{o}s \cite{E64}, and the best known upper bound $\ex(n,Q_8)=O(n^{8/5})$ is due Erd\H{o}s and Simonovits \cite{ES}. We show the same upper bound for $\ex(n,Q_8,s)$ as well.

\begin{proposition}\label{prop:cube}
For every integer $s\geq 2$, there exists $C_s$ such that $$\ex^*(n, Q_8, s)\leq C_{s} n^{8/5}.$$
\end{proposition}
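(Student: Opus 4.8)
The plan is to adapt the argument of Erd\H{o}s and Simonovits \cite{ES} for $\ex(n,Q_8)=O(n^{8/5})$ to the induced setting, using the $K_{s,s}$-freeness of $G$ both as the engine of the counting and as the mechanism that forces the copy we produce to be induced. It helps to recall that $Q_8$ is bipartite and admits three useful descriptions: it is $K_{4,4}$ with a perfect matching deleted; it is the bipartite double cover of $K_4$; and it is the prism $C_4\,\square\,K_2$ over a $4$-cycle. Concretely, an induced copy of $Q_8$ is exactly a choice of vertices $a_1,\dots,a_4,b_1,\dots,b_4$ with $a_i\sim b_j$ if and only if $i\ne j$, and with no edges inside $\{a_1,\dots,a_4\}$ or inside $\{b_1,\dots,b_4\}$. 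Since being induced-$Q_8$-free is \emph{not} inherited by arbitrary subgraphs, throughout the argument I would work inside $G$ itself and keep track of all edges among the vertices under consideration, rather than first passing to a bipartite subgraph as one does in the classical proof.

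Assume for contradiction that $e(G)>C_s n^{8/5}$ for a large constant $C_s$. The first step is a cleaning step. Deleting vertices of degree below $c_s n^{3/5}$ one at a time removes only a negligible fraction of the edges; then, using the K\H{o}v\'ari--S\'os--Tur\'an inequality to control how the codegrees $|N(u)\cap N(v)|$ are distributed, I would restrict to a large induced subgraph $G'$ on which the degrees and the relevant small-set codegrees are as regular as the $K_{s,s}$-freeness allows. This is where the hypothesis first enters: it rules out the degenerate possibility that a bounded number of very high-degree vertices or very high-codegree pairs carry essentially all of the structure. On $G'$ a standard convexity computation already yields $\Omega_s(n^{12/5})$ labelled $4$-cycles; note that the exponent $8/5$ lies well above $\ex^*(n,C_4,s)=O_s(n^{3/2})$ and $\ex^*(n,C_6,s)=O_s(n^{4/3})$ coming from Theorem~\ref{thm:cycles}, so that both induced $4$- and $6$-cycles are plentiful.

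The core of the proof is to convert this abundance of short cycles into an induced $Q_8$. Observe that $Q_8$ equals an (induced) $C_6$ on vertices $a_1,b_3,a_2,b_1,a_3,b_2$ together with a vertex $a_4$ adjacent to exactly $\{b_1,b_2,b_3\}$ and a vertex $b_4$ adjacent to exactly $\{a_1,a_2,a_3\}$, with $a_4\not\sim b_4$; equivalently, it is a pair of vertex-disjoint $4$-cycles joined by a matching in the pattern of the prism. Following Erd\H{o}s and Simonovits, I would count these configurations with all $12$ edges required but no non-edges yet imposed --- call them \emph{$Q_8$-frames} --- and use the regularity of $G'$ to show that the number of $Q_8$-frames is $\Omega_s(n^{\alpha})$ for a suitable $\alpha>0$. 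To upgrade a frame to a genuine induced copy, I would then run a deletion/supersaturation argument: a frame fails to be an induced $Q_8$ only if one of its at most sixteen forbidden vertex pairs is in fact an edge of $G'$, so every ``spoiled'' frame is really a copy of a fixed graph with one more edge than $Q_8$; since $G'$ has edge density $n^{-2/5+o(1)}$, its regularity makes the number of such copies of strictly smaller order than the number of frames. Hence some frame has none of its forbidden pairs present, i.e.\ is an induced $Q_8$ --- the desired contradiction.

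I expect the main obstacle to be making this last step quantitatively honest. One must produce an \emph{honest lower bound} on the number of $Q_8$-frames --- not merely the existence of one, which is all the classical (non-induced) argument needs --- and simultaneously an upper bound on the number of spoiled frames of strictly smaller order, and these two estimates force the cleaning thresholds and the strength of the K\H{o}v\'ari--S\'os--Tur\'an bound invoked at each codegree to be chosen in a coordinated way. The delicate point is the range of $s$ in which the $K_{s,s}$-freeness controls pair-codegrees but not triple-codegrees: there the ``regularity'' of $G'$ is only partial, and extracting a usable count of frames (and of their $+1$-edge spoilers) at the $8$-vertex level, possibly after a further restriction that tames the residual irregularity, is where most of the work --- and the dependence of $C_s$ on $s$ --- will lie.
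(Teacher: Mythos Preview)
Your proposal has a genuine gap at precisely the point you yourself flag. The entire content of the proof is the pair of inequalities ``number of $Q_8$-frames $\gg$ number of spoiled frames,'' and neither is established. The heuristic ``edge density $n^{-2/5}$, so one extra edge costs a factor $n^{-2/5}$'' is not a proof: $K_{s,s}$-freeness and degree regularity do not by themselves force the number of homomorphic copies of $Q_8+e$ to be of strictly smaller order than the number of homomorphic copies of $Q_8$. For instance, the forbidden edges within one colour class create triangles in $Q_8+e$, and a $K_{s,s}$-free graph can be locally very cliquey (think of blow-up constructions), so the extra edge can be ``free'' on a positive fraction of frames unless something further is done. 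You acknowledge in your final paragraph that the actual lower bound on frames and upper bound on spoilers are ``where most of the work will lie,'' but you give no mechanism for carrying them out; as written, the proposal is a plan rather than a proof. (Also, the classical Erd\H{o}s--Simonovits argument does not proceed by counting $Q_8$-frames and subtracting spoilers; it is already a structural argument, so ``following Erd\H{o}s and Simonovits'' does not license this step.)

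The paper's proof avoids the counting entirely by exploiting a structural decomposition of $Q_8$ you mention but do not use: $Q_8$ is two non-adjacent antipodal vertices $u,v$ together with an induced $C_6$ alternating between $N(u)$ and $N(v)$. After passing to an almost-regular bipartite part (Lemma~\ref{lemma:passing to an almost regular subgraph}), the paper shows that most alternating paths of length $3$ are \emph{induced} (Lemma~\ref{lemma:many induced paths}, which already needs $K_{s,s}$-freeness), so by pigeonhole some non-adjacent pair $u,v$ has $\Omega(d^3/m)$ induced $3$-paths between them. This places $\Omega(d^3/m)$ edges between sets $U\subset N(u)$ and $V\subset N(v)$ of size at most $Kd$, with $U$ anticomplete to $v$ and $V$ anticomplete to $u$. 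A short computation gives $e(U,V)\ge C_0(|U|+|V|)^{4/3}$, and then Theorem~\ref{thm:alternating cycles} with $k=3$ produces an \emph{induced} alternating $C_6$ inside $U\cup V$. Together with $u$ and $v$ this is an induced $Q_8$. The key point is that the hard ``induced'' work was already packaged into Theorem~\ref{thm:alternating cycles}; the cube proof is then a one-page reduction, with no supersaturation or frame-counting needed.
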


\subsection{Applications}

Our results and techniques have several additional applications which we discuss in this subsection.

K\"uhn and Osthus \cite{KO} proved that for every $k$, every graph of sufficiently large average degree contains a $C_4$-free subgraph of average degree at least $k$ (see also \cite{MPS} for improved bounds). McCarty \cite{McCarty} established a natural induced variant of this result: for every $k$ and sufficiently large $s$, there exists a smallest number $g(k,s)$ such that if a $K_{s,s}$-free graph $G$ has average degree at least $g(k,s)$, then $G$ contains a $C_4$-free \emph{induced} subgraph of average degree at least $k$. Quantitative bounds are first proved by Du, Girão, Hunter, McCarty and Scott \cite{DGHMS23}, who show that $g(k,s)\le k^{O(s^3)}$, while Girão and Hunter \cite{GH23} prove that $g(k,s)\le  s^{O(k^4)}$. In \cite{GH23}, the lower bound $g(k,s) \ge s^{\Omega(k^2)}$ is also established for $s$ sufficiently large with respect to $k$. Our results can be used to show that this lower bound is tight and $g(k, s)\leq s^{O(k^2)}$.

\begin{restatable}{theorem}{Cfourfree}\label{thm:induced C_4-free}
Let $k\ge 1$ be an integer and let $s$ be sufficiently large compared to $k$. Then every $K_{s, s}$-free graph $G$ with average degree at least $s^{10^3 k^2}$ contains an induced subgraph with no  $C_4$ and average degree at least $k$.
\end{restatable}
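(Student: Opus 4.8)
The plan is to reduce the problem to Theorem~\ref{thm:cycles} in the case $k=2$ (i.e., the bound $\ex^*(n,C_4,s)\le C_{s}n^{3/2}$), combined with a standard probabilistic sampling / random deletion argument to boost a low-quality induced subgraph to one of average degree at least $k$. First I would observe that if $G$ is $K_{s,s}$-free with average degree at least $d:=s^{10^3k^2}$, then one cannot yet apply Theorem~\ref{thm:cycles} directly, because $G$ may well contain induced $C_4$'s; the point is to find a large \emph{induced} subgraph that is $C_4$-free while retaining large average degree. The natural approach is iterative: repeatedly pass to a well-chosen induced subgraph that either is already $C_4$-free (and we are done) or has strictly fewer vertices while its average degree drops only by a bounded multiplicative factor, so that after few steps we must terminate.

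The key step is the dichotomy at each stage. Suppose $G'$ is our current $K_{s,s}$-free induced subgraph on $m$ vertices with average degree $d'$. If $G'$ has no induced $C_4$, we stop. Otherwise, by Theorem~\ref{thm:cycles} applied with $k=2$, the number of edges of $G'$, if it were induced $C_4$-free, could be at most $C_s m^{3/2}$; since $G'$ has $\ge d'm/2$ edges, once $d'm/2 > C_s m^{3/2}$, i.e. $d' > 2C_s m^{1/2}$, we would get a contradiction — so we may assume $m > (d'/(2C_s))^2$, giving a genuine lower bound on the number of vertices in terms of the average degree. This is not quite what produces the subgraph, though. The correct mechanism, following the McCarty / Girão--Hunter strategy, is: an induced $C_4$-free subgraph of large average degree can be extracted by taking a random subset of the vertices where each vertex is kept with an appropriate probability $p$; the expected number of retained vertices is $pm$, the expected number of retained edges is $p^2 e(G')\ge p^2 d'm/2$, and the expected number of retained induced $C_4$'s is $p^4$ times the count of induced $C_4$'s in $G'$. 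Since $G'$ is $K_{s,s}$-free (hence in particular $K_{2,t}$-free for $t$ around $s$), one can bound the number of induced $C_4$'s, delete one vertex from each surviving one, and check that for $p$ chosen roughly as $d'^{-1/2}$ up to the $s$-dependent factors, the resulting induced subgraph still has average degree at least $d'^{\,1/2-o(1)}$ divided by a power of $s$. Iterating this square-root-type step $O(\log\log d)$ times drives the average degree down to $k$ while keeping it above $k$ at every intermediate stage, provided the initial $d=s^{10^3k^2}$ is large enough to absorb the accumulated $\operatorname{poly}(s)$ losses.

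The main obstacle I anticipate is controlling the interaction between the $C_4$-deletion and the average-degree bookkeeping across the iteration: each round we lose a factor that is polynomial in $s$ (coming from the constant $C_{s}$ in Theorem~\ref{thm:cycles}, which is itself some fixed power of $s$), and we also take a square root of the average degree, so we must verify that the number of rounds times $\log(\operatorname{poly}(s))$ stays below $\log d$, which is why the exponent $10^3k^2$ — rather than something linear in $k$ — is needed: after about $2\log_2 k$ halvings-in-the-exponent of $\log d$ we reach average degree $\approx k$, and each round costs an additive $O(\log s)$ in the exponent, so $\log d \gtrsim (\log k)\cdot O(\log s)$ must be arranged, and being generous gives $s^{O(k^2)}$. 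I would also need to be careful that the $K_{s,s}$-freeness is inherited by all induced subgraphs (it is, trivially) so that Theorem~\ref{thm:cycles} applies at every stage, and that the sampling preserves enough edges with positive probability simultaneously with destroying all induced $C_4$'s — a routine first-moment deletion argument, but one must track the constants to land the clean exponent claimed in the statement.
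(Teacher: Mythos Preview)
Your approach diverges from the paper's and, as written, contains gaps that do not seem repairable along the lines you suggest.

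First, the claim that a $K_{s,s}$-free graph is ``in particular $K_{2,t}$-free for $t$ around $s$'' is false: two vertices in a $K_{s,s}$-free graph can have an arbitrarily large common neighbourhood (e.g.\ $K_{2,M}$ is $K_{3,3}$-free for every $M$). Consequently you have no bound on the number of induced $C_4$'s, and in fact in extremal $K_{s,s}$-free graphs the $C_4$-count is of order $n^{4-4/s}$, much larger than the edge count. A one-shot ``sample with probability $p$ and delete a vertex from each surviving $C_4$'' argument then faces the difficulty that each vertex-deletion may kill many edges (this is an \emph{induced} subgraph, so you cannot delete single edges), and a back-of-the-envelope calculation with these parameters does not yield average degree $\ge k$ from the hypothesis $d\ge s^{O(k^2)}$. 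Second, your invocation of Theorem~\ref{thm:cycles} is circular: that theorem upper-bounds the edges of a graph that is \emph{already} induced-$C_4$-free, so it cannot be used to produce such a subgraph. Third, the iteration you describe is internally inconsistent---if a single round of sampling-plus-deletion already yields a $C_4$-free induced subgraph, there is nothing to iterate; if it does not, you have not said what the round actually outputs. Finally, your heuristic for why the exponent should be $k^2$ (square-root decay over $O(\log k)$ rounds) does not match the numerics and is not where the $k^2$ actually comes from.

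The paper's argument is structurally different and explains the $k^2$ directly. One fixes a single $C_4$-free bipartite graph $H_k$ of average degree $\ge k$ on at most $8k^2$ vertices, namely the point-line incidence graph of a projective plane of order $q\approx k$. If $G$ contained an induced copy of $H_k$ we would be done, so assume not. Then Proposition~\ref{prop:GHblackbox} (the Gir\~ao--Hunter dichotomy), applied with $H=H_k$ and $\varepsilon=1/(8k)$, forces---since the first alternative is exactly what we are assuming fails---an induced subgraph $G'$ on $n'\ge d(G)^{1/10}\ge s^{100k^2}$ vertices with $e(G')\ge (n')^{2-1/(8k)}$. Now part~(ii) of Theorem~\ref{thm:bounded deg variant} applies to $G'$ with the target $H_k$ (which has all degrees at most $2k$), because the required threshold $n'\ge (C_{H_k}s)^{8|V(H_k)|+20}$ is met precisely since $|V(H_k)|=O(k^2)$; this produces an induced $H_k$ in $G'$, a contradiction. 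Thus the exponent $O(k^2)$ comes from the vertex count of the projective-plane graph feeding into the linear-in-$|V(H)|$ exponent of Theorem~\ref{thm:bounded deg variant}, not from any iterative loss.
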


Note that quantitative bounds for $g(k,s)$ are quite useful, since there are interesting hereditary families $\cG$ which do not contain $C_4$-free graphs of large average degree $k$. For such families, we know that the average degree of any $K_{s, s}$-free graph $G\in \cG$ is at most $g(k, s)$ and therefore $\ex_{\cG}(n, s)\leq g(k, s)n$.
For example, the family $\cG$ of graphs which do not contain an induced subdivision of a fixed graph $H$ has this property, by a result of K\"uhn and Osthus \cite{KO04}. Moreover, the proof of Theorem~\ref{thm:induced C_4-free} can be used to show a tight bound on the Tur\'an number of induced subdivisions in $K_{s, s}$-free graphs, improving the bounds from \cite{KO04, DGHMS23, GH23, BBCD23}. 

\begin{theorem}\label{thm:induced subdivisions turan}
Let $H$ be a fixed bipartite graph, $s$ sufficiently large integer and let $G$ be a $K_{s, s}$-free graph which does not contain an induced subdivision of $H$. Then, the average degree of $G$ is at most $s^{O(|V(H)|)}$.
\end{theorem}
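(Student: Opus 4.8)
The plan is to reduce the statement to the bounded-degree result, Theorem~\ref{thm:bounded degree}, via the standard observation that any subdivision of $H$ in which each edge is subdivided a bounded number of times is itself a bipartite graph with one side of bounded degree. More precisely, let $H$ be a fixed bipartite graph with $m=|E(H)|$ edges, and for a vector $\mathbf{r}=(r_e)_{e\in E(H)}$ of nonnegative integers let $H^{\mathbf{r}}$ denote the subdivision of $H$ obtained by replacing each edge $e$ with a path of length $r_e+1$. Then $H^{\mathbf{r}}$ is bipartite, and in a suitable bipartition the side containing the newly created subdivision vertices together with the ``low side'' of $H$ consists only of vertices of degree $2$ and vertices of degree $\le \Delta(H)$; in any case $H^{\mathbf{r}}=(A,B;E)$ with all vertices of $B$ having degree at most $\max(2,\Delta(H))=:k_0$. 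Hence, if $G$ contains no induced copy of \emph{any} $H^{\mathbf{r}}$, it in particular contains no induced copy of $H^{\mathbf{r}}$ for a single well-chosen $\mathbf{r}$, and Theorem~\ref{thm:bounded degree} applies.

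The subtlety is that ``induced subdivision of $H$'' allows each edge to be subdivided an \emph{arbitrary} (unbounded) number of times, so a priori we must forbid infinitely many graphs $H^{\mathbf{r}}$, and we cannot simply invoke Theorem~\ref{thm:bounded degree} for one of them. The first step is therefore a \emph{bounded-length reduction}: I claim it suffices to forbid induced copies of $H^{\mathbf{r}}$ for all $\mathbf{r}$ with $\|\mathbf{r}\|_\infty \le N$, where $N=N(s,H)$ is a suitable threshold, to force linear-in-$n$ (up to the $s^{O(|V(H)|)}$ factor) average degree. This is exactly the phenomenon exploited in \cite{KO04,DGHMS23,GH23,BBCD23}: a $K_{s,s}$-free graph of large average degree not only contains a subdivision of $H$, but contains an \emph{induced} one in which the subdivision paths are short. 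Concretely, I would first pass to a bipartite-like, almost-regular subgraph of average degree $d=s^{\Theta(|V(H)|)}$ (via the usual cleaning: discard low-degree vertices iteratively), then use a greedy/BFS argument — picking the branch vertices one at a time and connecting consecutive ones by shortest paths inside the already-built structure's complement of closed neighborhoods — where the $K_{s,s}$-freeness guarantees that neighborhoods are small (each vertex ``blocks'' at most $s$ choices along any path), so that as long as $d$ exceeds a threshold polynomial in $s$ with exponent $O(|V(H)|)$ one can complete an induced subdivision with every path of length at most some $\ell=\ell(s)$. The key quantitative input making the exponent $O(|V(H)|)$ rather than worse is that we only need to connect $|V(H)|$ branch vertices and route $|E(H)|$ paths, and at each of the $O(|V(H)|)$ steps the ``loss'' in average degree is a factor $s^{O(1)}$.

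Granting the bounded-length reduction, the finishing argument is clean. Fix $\ell=\ell(s,H)$ from the reduction and let $\mathcal{H}=\{H^{\mathbf{r}}:\|\mathbf{r}\|_\infty\le \ell\}$, a finite family; each member is a bipartite graph with at most $|V(H)|+\ell\cdot|E(H)| = O_{H,s}(1)$ vertices and with one side of degree bounded by $k_0=\max(2,\Delta(H))$. If $G$ is $K_{s,s}$-free and contains no induced subdivision of $H$, then $G$ contains no induced copy of any $H^{\mathbf{r}}\in\mathcal{H}$, so by the bounded-length reduction $G$ has average degree below the claimed threshold — and conversely, applying Theorem~\ref{thm:bounded degree} to $H^{\mathbf{r}_0}$ for one $\mathbf{r}_0\in\mathcal{H}$ bounds $e(G)\le (C_{H^{\mathbf{r}_0}}s)^{4|V(H^{\mathbf{r}_0})|+10}\,n^{2-1/k_0}$. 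But this only gives $n^{2-1/k_0}$, not $s^{O(|V(H)|)}n$; to get the \emph{linear} bound we genuinely need the reduction argument itself to yield average degree $\le s^{O(|V(H)|)}$, using Theorem~\ref{thm:bounded degree} merely as a black box to handle the ``dense'' regime. So the logic is: either $G$ has average degree $\le s^{c|V(H)|}$ and we are done, or $G$ has average degree $> s^{c|V(H)|}$, in which case the cleaning-plus-greedy-routing argument produces an induced short subdivision of $H$, a contradiction.

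I expect the main obstacle to be the \emph{bounded-length routing} step: ensuring that when we connect branch vertices by shortest paths, the resulting paths are induced, pairwise nonadjacent except at shared endpoints, and — crucially — of bounded length, all while only losing a factor $s^{O(1)}$ per branch vertex so the total exponent stays $O(|V(H)|)$. The $K_{s,s}$-freeness is what controls path lengths (a shortest path of length $>2t$ between two high-degree vertices would, by a neighborhood-expansion argument, force a $K_{t,t}$), but making the bookkeeping tight enough to land at the optimal exponent $s^{O(|V(H)|)}$ — matching the lower bound construction implicit in the trees discussion — requires care; this is presumably where the paper reuses the machinery developed for Theorem~\ref{thm:induced C_4-free} and Theorem~\ref{thm:trees}.
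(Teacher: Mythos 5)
Your proposal heads toward re-deriving a routing/subdivision-finding argument from scratch, and you correctly flag that the entire difficulty lives in the ``bounded-length routing'' step --- but you never close that gap, and indeed closing it \emph{is} the theorem. The logic ``either average degree is small and we are done, or it is large and the cleaning-plus-greedy-routing produces an induced short subdivision'' is circular until that routing step is actually proved, and your sketch of it (losing only a factor $s^{O(1)}$ per branch vertex, shortest paths staying induced and pairwise non-adjacent, total exponent $O(|V(H)|)$) is exactly the kind of delicate bookkeeping that the paper deliberately avoids.

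The paper's argument sidesteps routing entirely via three observations you did not combine. First, $H$ is itself a subdivision of $H$ (with zero subdivision vertices), so a graph with no induced subdivision of $H$ in particular has no induced copy of $H$ --- you never need to build long subdivision paths if you can just find $H$. Second, the Girão--Hunter dichotomy (Proposition~\ref{prop:GHblackbox}) applied to $G$ with $k=d_0$ and $\eps=\frac{1}{4|V(H)|}$ gives either a $C_4$-free induced subgraph of average degree at least $d_0$, or a dense induced subgraph $G'$ with $e(G')\ge (n')^{2-1/4|V(H)|}$ on $n'\ge d(G)^{1/10}$ vertices. Third, the $C_4$-free branch is killed not by any routing of your own but by citing Kühn--Osthus \cite{KO04}: a $C_4$-free graph of average degree at least $d_0=d_0(H)$ (independent of $s$) already contains an induced subdivision of $H$, contradicting the hypothesis. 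The dense branch is then handled directly by Theorem~\ref{thm:bounded deg variant}(ii) applied to $G'$ with the original bipartite graph $H$ (not $H^{\mathbf r}$, so no blow-up in $|V|$ and no spurious $n^{2-1/k_0}$ term), giving an induced copy of $H$ and hence a contradiction. This is why the exponent stays $O(|V(H)|)$: the only $s$-dependent step is Theorem~\ref{thm:bounded deg variant}(ii) for $H$ itself, and $d_0$ enters only as the constant $k$ in Proposition~\ref{prop:GHblackbox}. Your $H^{\mathbf r}$ reduction and the shortest-path BFS argument are unnecessary, and trying to carry them out would blow up the exponent by a factor involving $\ell(s)$, as you yourself noted.
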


\noindent
The fact that this bound is tight up to the constant in the exponent of $s$ follows from the same random graph construction discussed after the statement of Theorem~\ref{thm:trees}.

A well known lopsided weakening of the Erd\H{o}s-Hajnal conjecture, proved by Fox and Sudakov \cite{FS09}, says that every induced $H$-free graph on $n$ vertices contains either a complete bipartite graph with vertex classes of size $n^{c}$, or an independent set of size at least $n^{c}$, for some $c=c(H)>0$. Their proof implies that one can take $c(H)=\Omega(1/|V(H)|^3)$. We improve this in case $H$ is bipartite to $c(H)=\Omega(1/|V(H)|)$, which is optimal as can be shown by considering appropriate random graphs. We also remark that a recent result of Nguyen, Scott and Seymour \cite{NSS} proves the Erd\H{o}s-Hajnal conjecture in the case one forbids \emph{bi-induced copies} of a bipartite graph $H$, i.e.\ we forbid every induced copy of those graphs we get by possibly adding edges to the two vertex classes of $H$.

\begin{restatable}{theorem}{ErdosHajnal}\label{thm:half Erdos Hajnal}
Let $H$ be a bipartite graph on $h$ vertices. Then, for every sufficiently large $n$, every graph $G$ on $n$ vertices with no induced copy of $H$ contains either an independent set of size $n^{\Omega(1/h)}$ or a complete bipartite graph with parts of size at least $n^{\Omega(1/h)}$.
\end{restatable}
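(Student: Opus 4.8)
The plan is to reduce to the case that $G$ has no large independent set and then to force a large complete bipartite subgraph by attempting to build an induced copy of $H$, using the hypothesis only to convert every obstruction into the desired structure.

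First I would fix a bipartition $H=(A,B;E)$ with $|A|\ge|B|$ and set $s:=n^{c/h}$ for a small absolute constant $c$ to be chosen last. If $\alpha(G)\ge s$ we are done, so assume $\alpha(G)<s$; then $G$ is dense, with $e(G)\ge \tfrac{n^2}{4s}$. Two degenerate sub-cases are immediate: if $H$ is edgeless then ``$G$ has no induced $H$'' just means $\alpha(G)<h$; and if $\alpha(G)<h$ at all, then the Ramsey bound $R(h,m)\le(h+m)^{h}$ gives a clique, hence a complete bipartite graph, of size $\ge\tfrac14 n^{1/h}=n^{\Omega(1/h)}$. So from now on I may assume $h\le\alpha(G)<s$.

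Next I would try to embed $H$ in $G$ as an induced subgraph by a dependent-random-choice type greedy process, which would contradict the hypothesis. Embed the larger side $A=\{x_1,\dots,x_a\}$ one vertex at a time as $u_1,\dots,u_a$, maintaining, for each distinct neighbourhood $N_H(y)\subseteq A$ of a vertex $y\in B$, a candidate set that will host the relevant $B$-vertices. At step $i$, the new vertex $u_i$ should be non-adjacent to $u_1,\dots,u_{i-1}$ (keeping the image of $A$ independent) and should cut every relevant candidate set roughly in half, so that each candidate set shrinks by at most a factor $s$ per step and hence, after $a\le h$ steps, still has $\ge n/s^{h}=n^{1-c}\gg1$ vertices. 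If no admissible $u_i$ exists at some step, then either a constant fraction of the current pool is adjacent to almost all of the pool, or of some candidate set --- producing a clique or a $K_{m,m}$ with $m=n^{\Omega(1/h)}$ --- or else some candidate set $C$ with $|C|\ge n^{1-c}$ is very sparse towards the pool, a case that has to be shown to produce the required clique/independent set as well (and any candidate set with independence number $<h$ yields, by Ramsey, a clique of size $n^{\Omega(1/h)}$). Finally, once the image of $A$ is placed and all candidate sets are large, the independent set of size $\ge h$ guaranteed above together with a greedy selection inside the candidate sets realises the image of $B$ with all prescribed adjacencies and non-adjacencies, giving an induced $H$ and the desired contradiction. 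Tracking the constants fixes an admissible $c=\Omega(1)$ and yields the exponent $\Omega(1/h)$.

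I expect the main difficulty to be the induced nature of the embedding: classical dependent random choice yields large common neighbourhoods, which suffices to embed complete bipartite patterns but not to realise the non-edges of a general bipartite $H$, nor to keep both sides of the embedding independent, and the ``sparse candidate set'' case above is the technically hardest point. A second, more quantitative, subtlety is the budgeting of the exponent: since $|V(H)|=h$ the embedding runs for $\Theta(h)$ rounds, each of which can afford only a bounded power of $s$, so the losses must accumulate additively rather than multiplicatively in the exponent of $s$ --- in particular, simply invoking Theorem~\ref{thm:bounded degree} with the target biclique size in place of $s$ loses $s^{\Theta(h)}$ against an $n^{2-1/k}$ bound with $k$ as large as $h/2$, and so only gives the weaker exponent $\Omega(1/h^2)$.
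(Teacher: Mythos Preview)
Your concluding calculation is correct for Theorem~\ref{thm:bounded degree} as stated in the introduction, but it overlooks the variant the paper actually uses. The proof in the paper is a three-line application of part~(ii) of Theorem~\ref{thm:bounded deg variant}: setting $s=h^{-2}n^{1/(8h+20)}$ one has $n\ge (C_H s)^{8h+20}$, so any $K_{s,s}$-free $G$ with no induced $H$ satisfies $e(G)\le n^{2-1/(4k)}\le n^{2-1/(4h)}$, and then Tur\'an's bound gives $\alpha(G)\ge n^{1/(4h)}$. The point of part~(ii) is precisely the decoupling you flagged as the main quantitative subtlety: by absorbing the $(C_Hs)^{\Theta(h)}$ prefactor into a hypothesis on $n$ rather than into the density bound, one gets $s=n^{\Theta(1/h)}$ instead of $s=n^{\Theta(1/h^2)}$. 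So the ``direct invocation'' you dismissed does in fact work, once one uses the right formulation.

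As for your proposed alternative, it is essentially an outline of how one might reprove (the relevant part of) Theorem~\ref{thm:bounded deg variant} from scratch, and it has a genuine gap. You yourself single out the ``sparse candidate set'' case as the technically hardest point and do not indicate how to handle it; this is not a minor omission. In the paper, the corresponding obstruction --- finding an independent image of $A$ whose neighbourhood pattern is rich enough to allow the greedy embedding of $B$ --- is exactly what the machinery of Section~\ref{sub:good independent} (superspread hypergraphs and Proposition~\ref{prop:good independent sets}) is built to overcome, and the argument there is substantially more delicate than a failure-to-$K_{m,m}$ dichotomy. Your final step, embedding $B$ as an independent set with the prescribed adjacencies to the image of $A$, also needs Lemma~\ref{lemma:greedy} or an equivalent, which again relies on the candidate sets being large in the strong ``rich'' sense, not merely nonempty. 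In short: either invoke Theorem~\ref{thm:bounded deg variant}(ii), which already delivers the $\Omega(1/h)$ exponent, or reprove its content --- but the latter is not achieved by the sketch as written.
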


\section{Bipartite graphs with bounded degree on one side}\label{sec:bounded}

The main goal of this section is to prove Theorem~\ref{thm:bounded degree}, which we now restate in a slightly extended form. 

\begin{theorem}\label{thm:bounded deg variant}
Let $H=(A,B;E)$ be a bipartite graph such that the degree of every vertex in $B$ is at most $k$, let  $C_H=4|A||B|$ and let $G$ be a $K_{s,s}$-free graph on $n$ vertices.
\begin{itemize}
    \item[(i)] If $e(G)\geq (C_Hs)^{4|V(H)|+10}n^{2-1/k}$, then $G$ contains an induced copy of $H$.

    \item[(ii)] If  $n\geq (C_Hs)^{8|V(H)|+20}$ and $e(G)\geq n^{2-1/4k}$, then $G$ contains an induced copy of $H$.
\end{itemize} 
\end{theorem}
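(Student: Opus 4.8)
The plan is to upgrade the dependent random choice proof of the F\"uredi--Alon--Krivelevich--Sudakov bound $\ex(n,H)=O_H(n^{2-1/k})$ so that it produces an \emph{induced} copy of $H$, exploiting $K_{s,s}$-freeness in two ways: a $K_{s,s}$-free graph has no clique on $2s$ vertices, so one can extract small independent sets via Ramsey's theorem, and it also gives codegree control. First, part (ii) reduces to part (i): the hypothesis on $e(G)$ is used only in one dependent random choice step, and running that step with a random sample of size $2k$ rather than $k$ turns the weaker bound $e(G)\ge n^{2-1/4k}$ into the statement that a random common neighbourhood has expected size $\ge n^{1/2}$, which is more than enough once $n\ge(C_Hs)^{O(|V(H)|)}$ --- exactly the range in (ii). So it suffices to prove (i). We may assume $k<s$, since otherwise $\ex(n,K_{s,s})=O_s(n^{2-1/s})$ contradicts the hypothesis on $e(G)$ for large $n$, and small $n$ is trivial.

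\textbf{Core and embedding of $A$.} Applying dependent random choice to $G$ with sample size $k$ gives a set $W\subseteq V(G)$ whose size equals a Ramsey number, $|W|=R(|A|,2s)\le(3s)^{|A|}\le(C_Hs)^{|V(H)|}$, such that every subset of $W$ of size at most $k$ has at least $m$ common neighbours in $G$, where $m=(C_Hs)^{O(|V(H)|)}$ will be fixed below. Crucially, we keep $|W|$ equal to a \emph{Ramsey} number rather than taking an exponentially large set and extracting an independent subset by K\H{o}v\'ari--S\'os--Tur\'an (the naive move): this is what keeps the exponent of $s$ linear in $|V(H)|$ instead of linear in $s$. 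Since $\omega(G[W])\le 2s-1$, Ramsey's theorem yields an independent set $I\subseteq W$ with $|I|=|A|$; fix a bijection $f\colon A\to I$. Embedding an induced copy of $H$ now reduces to producing an independent set $J\subseteq V(G)\setminus I$ such that for each pattern $P\in\{f(N_H(b)):b\in B\}$ at least $|B|$ vertices of $J$ have neighbourhood in $I$ \emph{exactly} equal to $P$ --- one then embeds $B$ greedily into $J$, and the image is induced because $I$ and $J$ are independent and the adjacencies between them are governed exactly by the patterns.

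\textbf{Building the reservoir: the crux.} Here the difficulty is that dependent random choice only delivers \emph{lower} bounds on common neighbourhoods, whereas ``exact trace $P$'' requires \emph{upper} bounds on the common neighbourhoods of the proper supersets of $P$ inside $I$ (whose common neighbours are precisely the vertices that would spoil inducedness). The approach is a dichotomy: either one can choose $I\subseteq W$ so that the common-neighbourhood sizes of its subsets are stratified by size with multiplicative gaps --- adjoining any element to a subset of size $<k$ shrinks its common neighbourhood by a factor at least $2|A|$ --- in which case a Bonferroni-type estimate produces at least $m/4$ vertices of exact trace $P$ for every relevant $P$; or no such $I$ exists, which entails that some independent $Q^\ast\subseteq I$ of size $\le k$ has an unexpectedly large common neighbourhood, and this large common neighbourhood --- being itself $K_{s,s}$-free and supplying a large pool of vertices all adjacent to $Q^\ast$ --- lets one iterate the whole analysis inside it relative to $I\setminus Q^\ast$. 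One then still has to pass from the (pairwise disjoint) exact-trace sets to a \emph{single} independent reservoir meeting each of them in $\ge|B|$ vertices, not one independent set per pattern; this, together with choosing $m$ so that the recursion terminates after a bounded number of rounds without the parameters leaving the range $(C_Hs)^{O(|V(H)|)}$, is the technical heart of the proof.

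\textbf{Main obstacle.} In short, everything away from the reservoir step is a careful adaptation of standard tools; the hard part is the conversion of the lower-bound output of dependent random choice into the upper-bound condition hidden in the word ``induced'', while (a) keeping every auxiliary set of size $(C_Hs)^{O(|V(H)|)}$ --- which rules out all K\H{o}v\'ari--S\'os--Tur\'an extractions of large independent sets and forces the use of Ramsey's theorem on sets of size $s^{O(|V(H)|)}$ with tight bookkeeping in the dependent random choice --- and (b) not sacrificing the independence of the reservoir. The assumption $k<s$ is used both to keep the recursion finite and because the statement is vacuous otherwise.
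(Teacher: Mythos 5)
Your high-level plan is pointed in the right direction: start with dependent random choice with sample size $k$ (or $2k$ for part (ii)) to get a set in which all $k$-subsets have large common neighbourhoods, pull out an independent set of size $|A|$ via Ramsey-type considerations, and then argue that the ``exact trace'' requirement coming from inducedness can be met. The reduction of (ii) to the DRC computation with sample size $2k$ does match what the paper does. But the heart of the proof --- converting DRC's lower bounds on common neighbourhoods into the upper bounds that inducedness demands --- is exactly where your proposal stops being a proof.

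First, the ``reservoir'' you aim for is strictly stronger than what is actually needed. You want a \emph{single} independent set $J$ meeting each exact-trace class in $\ge|B|$ vertices; the paper never produces such a $J$. Instead it defines a set $I$ to be \emph{rich} if every $T\subseteq I$ with $|T|\le k$ has at least $(4|B|s)^{|B|}$ vertices outside $I$ with trace exactly $T$, and then embeds the vertices of $B$ one at a time (Lemma~\ref{lemma:greedy}), using only the $K_{s,s}$-freeness of $G$ (via Claim~\ref{claim:simple}) to guarantee that each newly placed vertex destroys at most a $1/2s$-fraction of the remaining candidates for every other vertex. No independence of the reservoir is needed, so the issue you flag --- passing from pairwise-disjoint trace classes to a single independent reservoir --- simply does not arise.

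Second, and more seriously, the dichotomy you describe for producing the stratified independent set $I$ is not actually an argument. ``Either $I$ can be chosen so that adjoining any element to a subset of size $<k$ shrinks its common neighbourhood by a factor $\ge 2|A|$, or some $Q^{\ast}$ has an unexpectedly large common neighbourhood and we iterate inside it relative to $I\setminus Q^{\ast}$'' gives no mechanism for why either horn must hold, what the iteration invariant is, why the recursion terminates after a bounded number of rounds, or why the parameters stay in the range $(C_Hs)^{O(|V(H)|)}$ through the recursion. You explicitly label this ``the technical heart of the proof'' and then defer it. In the paper this step is carried out by a genuinely different and fully explicit device: one forms the $2a$-uniform hypergraph of independent $2a$-sets inside $X$ (a supersaturation argument using the Erd\H{o}s--Szekeres bound on $R(K_{2s},K_{2a})$ shows there are enough of them), then \emph{cleans} this hypergraph down to an $(\eps,\delta)$-superspread hypergraph by a density-increment procedure (Proposition~\ref{prop:hypergraph cleaning} and Lemma~\ref{lemma:density increment}), and finally shows (Proposition~\ref{prop:spread counting}) that in a superspread hypergraph only a small fraction of edges contain any ``bad'' tuple, where ``bad'' is precisely a tuple whose trace-set would be too small. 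A surviving edge is a rich independent set. This superspread machinery is the concrete replacement for your unproven dichotomy, and it is not visible anywhere in your proposal.

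Finally, a smaller point: your claim that the exponent of $s$ would otherwise be ``linear in $s$'' is off; the naive route gives an exponent quadratic in $|V(H)|$ (as the paper notes, citing~\cite{GH23}), and keeping $|W|$ at a Ramsey number is not what buys the linear dependence --- the superspread cleaning and the careful bad-tuple accounting are. The added assumption $k<s$ is harmless but unnecessary; the paper's argument does not use it.
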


Although the second statement of Theorem~\ref{thm:bounded deg variant} is weaker than the first one for large $n$, it will be used in the proof of some of our other results. We develop new ideas and machinery in order to guarantee that the exponent of $s$ is linear in $|V(H)|$. Indeed, there are shorter proofs available that achieve an exponent of $s$ that is quadratic in $|V(H)|$ (see \cite{GH23} for similar arguments).  However, the methods used for obtaining this linear dependence are crucial for getting optimal results in our applications.

The proof of Theorem \ref{thm:bounded deg variant} has three main steps, which we outline now. Throughout this section, we fix a host graph $G$ with $n$ vertices and at least $C n^{2-1/k}$ edges which does not contain $K_{s, s}$ as a subgraph, and we fix the bipartite graph $H=(A,B;E)$. Furthermore, we denote the size of $A$ and $B$ by $a$ and $b$, and write  $|V(H)|=h=a+b$. 

\begin{description}
\item[Step 1.] Find a large set $X\subset V(G)$ in which every $k$-tuple of vertices has a large common neighbourhood. This step is a standard application of the dependent random choice technique \cite{FS}, which is a technique used in the proof that the usual Tur\'an number of $H$ is $O(n^{2-1/k})$.

\item[Step 2.] Construct \emph{rich} independent sets in $X$. Here, we call a set of vertices $S\subset V(G)$ \textit{rich} if for all subsets $T\subseteq S$ of size at most $k$, there are at least $(4bs)^{b}$ vertices $v\in V(G)\backslash S$ which are adjacent to all vertices of $T$ and non-adjacent to all vertices of $S\backslash T$. This step is the most  technically involved part of our proof.

\item[Step 3.] Show that if $I$ is a rich independent set of size $a$, then there is a set $U\subset V(G)$ of size $b$ such that $U\cup I$ induces a copy of $H$.
\end{description}

After presenting the proof of Theorem~\ref{thm:bounded deg variant}, we give a simple construction that shows why the upper bounds we obtain must depend polynomially on $s$.  

\subsection{Embedding the vertices of degree at most $k$}

We start with step 3, that is, we show how to find an induced copy of $H$ given a rich independent set. We begin this section by presenting a simple statement that will be used repeatedly in our proofs.

\begin{claim}\label{claim:simple}
Let $G$ be a graph which does not contain $K_{s,s}$. Then for every set $W$ of size at least $2s$, there are at most $s$ vertices $v\in V(G)$ for which $|W\backslash N(v)|\leq |W|/2s$.
\end{claim}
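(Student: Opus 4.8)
The plan is to argue by contradiction using a one-line union bound on the number of non-neighbours in $W$. Suppose, for contradiction, that there exist $s$ distinct vertices $v_1,\dots,v_s\in V(G)$ with $|W\setminus N(v_i)|\le |W|/2s$ for every $i$. A vertex of $W$ fails to be a common neighbour of $v_1,\dots,v_s$ only if it lies in $W\setminus N(v_i)$ for some $i$, so the number of such ``bad'' vertices of $W$ is at most $\sum_{i=1}^s |W\setminus N(v_i)|\le s\cdot\frac{|W|}{2s}=\frac{|W|}{2}$. Hence the set $W':=W\cap\bigcap_{i=1}^s N(v_i)$ of common neighbours of $v_1,\dots,v_s$ inside $W$ has size at least $|W|-|W|/2=|W|/2\ge s$, where the last inequality uses the hypothesis $|W|\ge 2s$.

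Next I would verify that $\{v_1,\dots,v_s\}$ and $W'$ are disjoint, so that these two sets give the two colour classes of a $K_{s,s}$. This is immediate: since $G$ is a simple graph, $v_i\notin N(v_i)$, so $v_i$ is not a common neighbour of all the $v_j$, hence $v_i\notin W'$. Choosing any $s$ vertices of $W'$ thus produces two disjoint sets of $s$ vertices each, with every vertex of the first adjacent to every vertex of the second; this is a copy of $K_{s,s}$ in $G$, contradicting the assumption that $G$ is $K_{s,s}$-free. Therefore the number of vertices $v$ with $|W\setminus N(v)|\le |W|/2s$ is at most $s-1$, which is in fact slightly stronger than the stated bound of $s$.

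The argument is completely elementary, so there is no genuine obstacle; the only point deserving a moment's attention is the disjointness of the two would-be colour classes, which follows at once from $G$ having no loops. It is worth noting that the vertices $v_1,\dots,v_s$ need not belong to $W$, but this causes no difficulty, since exhibiting a $K_{s,s}$ subgraph only requires $2s$ distinct vertices with the appropriate adjacencies. This claim will then be reused throughout the section whenever we need that only few vertices can be ``almost complete'' to a given set.
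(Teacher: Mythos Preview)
Your proof is correct and essentially identical to the paper's: both assume $s$ such vertices exist, apply the same union bound to show their common neighbourhood in $W$ has size at least $|W|/2\ge s$, and derive a $K_{s,s}$. The only difference is that you spell out the disjointness of $\{v_1,\dots,v_s\}$ and $W'$ (via $v_i\notin N(v_i)$), which the paper leaves implicit, and you note that the argument actually yields the slightly stronger bound $s-1$.
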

\begin{proof}
Suppose there are $s$ vertices $v_1, \dots, v_s\in V(G)$ such that $|N(v_i)\cap W|\geq \left(1-\frac{1}{2s}\right)|W|$. Then, the common neighbourhood of $v_1, \dots, v_s$ in $W$ has size at least $|W|-s\cdot \frac{1}{2s}|W|= |W|/2\geq s$. This contradicts that $G$ is $K_{s, s}$-free.
\end{proof}

\noindent
 Recall that a set of vertices $S\subset V(G)$ is \textit{rich} if for each $T\subseteq S$, $|T|\leq k$, we have $$|\{v\in V(G)\backslash S:N(v)\cap S=T\}|\geq (4bs)^{b}.$$

\begin{restatable}{lemma}{embeddingB}\label{lemma:greedy} 
Let $G$ be a graph not containing $K_{s,s}$. If $G$ contains a rich independent set of size $a$, then $G$ contains $H$ as an induced subgraph.
\end{restatable}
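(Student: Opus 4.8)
The plan is to embed the side $A$ of $H$ bijectively onto the rich independent set $I$ and then place the vertices of $B$ one by one into $V(G)\setminus I$. Fix a bijection $\phi\colon A\to I$ (possible since $|I|=a$). As $H$ is bipartite, each $u\in B$ has $N_H(u)\subseteq A$ with $|N_H(u)|\le k$; write $T_u=\phi(N_H(u))\subseteq I$. For $T\subseteq I$ with $|T|\le k$, let $W_T=\{v\in V(G)\setminus I:N(v)\cap I=T\}$; richness of $I$ gives $|W_T|\ge (4bs)^b$. If we can choose distinct vertices $v_u\in W_{T_u}$ ($u\in B$) that are pairwise non-adjacent, then $I\cup\{v_u:u\in B\}$ is an induced copy of $H$: $I$ spans no edges (it is independent, like $A$), $\{v_u:u\in B\}$ spans no edges (like $B$), each $v_u$ is joined to exactly $T_u=\phi(N_H(u))$ inside $I$, and the $v_u$ are disjoint from $I$ since $W_T\subseteq V(G)\setminus I$.

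The only real task is to secure pairwise non-adjacency while staying inside the prescribed reservoirs. A naive greedy choice can fail, because one already-placed vertex $v_{u_\ell}$ might be adjacent to almost all of a reservoir $W_{T_u}$ of some not-yet-placed $u$. To avoid this I would order $B=\{u_1,\dots,u_b\}$ arbitrarily and run a greedy procedure that also maintains \emph{shrinking reservoirs} $Z_j\subseteq W_{T_{u_j}}$, with the invariant that before step $i$ one has $|Z_j|\ge (4bs)^b/(2s)^{i-1}$ for all $j\ge i$ and that $Z_j$ meets no $N(v_{u_\ell})$ with $\ell<i$. At step $i$: for each $j>i$ we still have $|Z_j|\ge 2s$, so by Claim~\ref{claim:simple} all but at most $s$ vertices $v$ satisfy $|Z_j\setminus N(v)|>|Z_j|/(2s)$; discarding these bad vertices over all $j>i$ together with the $i-1$ previously chosen vertices forbids at most $(b-1)(s+1)$ vertices, far fewer than $|Z_i|$. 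Pick any $v_{u_i}\in Z_i$ outside this forbidden set, and update $Z_j\leftarrow Z_j\setminus N(v_{u_i})$ for $j>i$; by the choice of $v_{u_i}$ each new $Z_j$ has size exceeding $|Z_j|/(2s)$, so the invariant persists. Since $v_{u_i}\in Z_i$ and $Z_i$ contains no neighbour of any earlier $v_{u_\ell}$, the final family $\{v_{u_1},\dots,v_{u_b}\}$ is independent, its members are distinct, and they lie in the correct reservoirs $W_{T_{u_i}}$.

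It then only remains to verify the induced-copy claim from the first paragraph, which is immediate from the three properties just secured, and to check the arithmetic behind the invariant: $(4bs)^b/(2s)^{i-1}$ must stay at least $2s$ (so that Claim~\ref{claim:simple} applies to the future reservoirs) and strictly larger than the number $(b-1)(s+1)$ of forbidden vertices, for every $1\le i\le b$. Both reduce, in the worst case $i=b$, to comparing $(4bs)^b/(2s)^{b-1}=2^{b+1}b^bs$ with $2s$ and with $(b-1)(s+1)$, which hold with room to spare; this is precisely why the threshold in the definition of richness is taken to be $(4bs)^b$. I expect this reservoir bookkeeping — ensuring the reservoirs survive all $b$ rounds of shrinking by a factor $2s$ — to be the only delicate point of the argument.
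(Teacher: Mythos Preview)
Your proposal is correct and is essentially the same argument as the paper's own proof: embed $A$ onto the rich independent set, then greedily place the vertices of $B$ one at a time while maintaining, for each not-yet-embedded $u_j$, a shrinking reservoir of admissible targets, using Claim~\ref{claim:simple} at each step to guarantee that only $s$ choices per future reservoir can shrink it by more than a factor $2s$. The only cosmetic differences are that the paper tracks the invariant as $|U_j|\ge(4bs)^{\,b-i}$ rather than your $(4bs)^b/(2s)^{i-1}$, and it removes the newly placed vertex itself from every future candidate set (so distinctness is automatic), whereas you leave it in and forbid the $i-1$ earlier images explicitly; both variants go through with the same slack.
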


\begin{proof}

Let $I$ be a rich independent set of size $|A|$. We embed the vertices of $A$ into $I$ in an arbitrary manner and show that we can extend this embedding to obtain an induced copy of $H$. We index the vertices of $B$ by $b_1, \dots, b_{|B|}$, and we denote the embedding of vertices of $A$ into $I$ by $\phi$.

We embed the vertices $b_1, b_2, \dots, b_{|B|}$ inductively. Assume that we already defined the images $\phi(b_1), \dots, \phi(b_i)$. A vertex  $v\in V(G)\backslash(\{\phi(b_1), \dots, \phi(b_i)\}\cup I)$ is a \textit{candidate} for $b_j$ if it is not adjacent to $\phi(b_1), \dots, \phi(b_i)$, and  satisfies $N(v)\cap I=\phi(N_H(b_j))$. The first condition serves to ensure that the embedding of $B$ forms an independent set, while the second one guarantees that the edges between $\phi(A)$ and $\phi(B)$ correspond precisely to the edges of $H$. We show by induction on $i=0, 1, \dots, |B|$ that one can embed the vertices $b_1, \dots, b_i$ such that for each $j>i$, there are at least $(4|B|s)^{|B|-i}$ candidates for $b_j$. When $i=0$, there are at least $(4|B|s)^{|B|}$ candidates for each $b_j$, since $I$ is a rich independent set and $b_j$ has at most $k$ neighbours. Having defined $\phi(b_1), \dots, \phi(b_{i})$, our goal is to define $\phi(b_{i+1})$ such that the sets of candidates have not decreased too much. More precisely, if we denote the set of candidates for $b_j$ by $U_j$, our goal is to find a vertex $v\in U_{i+1}$ for which $|U_j\backslash N(v)|\geq |U_j|/2s$ for all $j\geq i+2$. If we find such a vertex $v$, then we take $\phi(b_{i+1}):=v$.

The existence of such a vertex $v$ is a simple consequence of Claim~\ref{claim:simple}. Namely, by the induction hypothesis, for $i\leq |B|-1$ we have $|U_j|\geq (4|B|s)^{|B|-i}\geq 2s$ for all $j>i$ and therefore for each $j>i$ there are at most $s$ vertices $v$ in $U_{i+1}$ with $|U_j\backslash N(v)|\leq |U_j|/2s$. Therefore, from $|U_{i+1}|\geq (4|B|s)^{|B|-i}>|B|s$ we conclude there is a vertex $v\in U_{i+1}$ such that for any $j\geq i+2$ one has $|U_j\backslash N(v)|\geq |U_j|/2s$. 

Setting $\phi(b_{i+1})=v$ is sufficient to show the induction step. The set of candidates for $b_j$ is decreased from $U_j$ to $U_j\backslash (N(v)\cup \{v\})$ and we have
\[|U_j\backslash (N(v)\cup \{v\})|\geq \frac{1}{2s}|U_j|-1\geq \frac{1}{2s}(4|B|s)^{|B|-i}-1\geq (4|B|s)^{|B|-(i+1)}.\]

Thus, by iterating this procedure until $i=|B|$, we conclude that there is an embedding $\phi:V(H)\rightarrow V(G)$ which induces a copy of $H$, thus completing the proof.
\end{proof}

\hide{Clearly, Proposition~\ref{phase1} with Lemma~\ref{greedy} will give Proposition~\ref{from DRC to victory}, so we are now left to find this set $I$. We conclude this subsection with a few comments about why this phase is more complicated than the analysis above. 

Things are complicated by the fact that $H$ has some edges, meaning that $T_b^{(\phi_0)}$ will lie in the common neighborhood of certain vertices (this is why we need dependent random choice). This ruins everything from being revealed in a global order. Instead, for each $b\in B$, we will fix some ordering $a_1,\dots,a_{|A|}$ of $A$, and make sure that $|T_b|$ grows appropriately. This introduces various technical complications that we handle in the next subsection.}

\subsection{Finding a rich independent set}\label{sub:good independent}

In this section, we prove the following proposition, which serves as the second step in the proof of Theorem~\ref{thm:bounded deg variant}. Recall that $C_H=4ab$.

\begin{restatable}{proposition}{goodindependentsets}\label{prop:good independent sets}
Let $G$ be a $K_{s,s}$-free graph and $X\subseteq V(G)$ a set of at least $(C_H s)^{4a+10}$ vertices in which every $k$-tuple of vertices has at least $(C_H s)^{2 h}$ common neighbours. Then $X$ contains a rich independent set of size~$a$. 
\end{restatable}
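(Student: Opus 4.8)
The plan is to build the rich independent set $I$ greedily, one vertex at a time, always choosing the new vertex inside $X$. Write $I_i=\{v_1,\dots,v_i\}$ for the set built after $i$ steps, and for a subset $T\subseteq I_i$ with $|T|\le k$ let $W^{(i)}_T=\{v\in V(G)\setminus I_i: N(v)\cap I_i=T\}$ be the set of vertices whose trace on $I_i$ is exactly $T$. I will maintain three invariants: $I_i$ is independent; for every such $T$ one has $|W^{(i)}_T|\ge D_i$, where $D_0\ge D_1\ge\cdots\ge D_a$ is a fixed geometrically decreasing sequence of thresholds with $D_a\ge (4bs)^b$; and the ``free bucket'' $W^{(i)}_\emptyset\cap X$ still has size at least some $E_i\ge 1$, so that a next vertex is available. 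When $i=a$ the first two invariants say precisely that $I=I_a$ is a rich independent set, which is the conclusion.

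To pass from $I_i$ to $I_{i+1}$ I must pick $v_{i+1}\in W^{(i)}_\emptyset\cap X$ (which keeps $I_{i+1}$ independent) so that the bucket invariant survives. Adding $v_{i+1}$ splits each bucket $W^{(i)}_T$ into its non-neighbours $W^{(i+1)}_T$ and, when $|T|\le k-1$, its neighbours, which form the brand-new bucket $W^{(i+1)}_{T\cup\{v_{i+1}\}}$. Keeping the non-neighbour parts large is the easy direction: by Claim~\ref{claim:simple}, for each of the at most $\binom{i}{\leq k}$ buckets at most $s$ choices of $v_{i+1}$ shrink it below a $\frac{1}{2s}$-fraction of its size, and since every bucket has size $\ge D_i$, which is enormous compared with $s\binom{i}{\leq k}$, we can avoid all of these choices inside the huge pool $W^{(i)}_\emptyset\cap X$; here the geometric ratio of the $D_i$'s should be taken to be about $2s$, so that the total loss over $a$ rounds is absorbed by the slack between the hypothesis value $(C_Hs)^{2h}$ and the target $(4bs)^b$. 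The delicate direction is keeping the newly created buckets large, i.e.\ ensuring $v_{i+1}$ has at least $D_{i+1}$ neighbours in each $W^{(i)}_T$ with $|T|\le k-1$; this fails for a general vertex. The main leverage here is the common-neighbourhood hypothesis: since any $T\subseteq X$ with $|T|\le k-1$ can be completed to a $k$-tuple inside $X$, every vertex $x\in X\setminus T$ has at least $(C_Hs)^{2h}$ neighbours in the common neighbourhood of $T$ in $G$, so as long as the bucket $W^{(i)}_T$ still contains a large share of that common neighbourhood, every admissible choice of $v_{i+1}$ automatically has many neighbours in $W^{(i)}_T$. Making this quantitative — tracking how much of each common neighbourhood still survives inside the corresponding bucket, and choosing the thresholds $D_i$ and the admissible pool at each step (with a careful, possibly dependent-random-choice-flavoured, selection) so that ``enough survives'' throughout — is the technical core.

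The step I expect to be the main obstacle is exactly this bookkeeping: simultaneously controlling the downward drift of every bucket (from the non-neighbour splits and from the later vertices $v_j$) and the lower bound on every freshly created bucket, over all $\binom{a}{\leq k}$ subsets and all $a$ rounds, while keeping the required size of $X$ at only $(C_Hs)^{O(a)}$ rather than $(C_Hs)^{O(a^2)}$. Everything else should be routine: independence of $I$ is immediate from choosing $v_{i+1}$ in the free bucket; the survival of the free bucket itself is another application of Claim~\ref{claim:simple}; the number of forbidden choices of $v_{i+1}$ at each step is only $O_s\big(\binom{a}{\leq k}\big)$, negligible against $|W^{(i)}_\emptyset\cap X|$; and richness of $I$ is read off directly from the invariants at $i=a$.
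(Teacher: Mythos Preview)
Your greedy scheme has a real gap at precisely the step you call the hard direction. You want $W^{(i+1)}_{T\cup\{v_{i+1}\}}=W^{(i)}_T\cap N(v_{i+1})$ to be large, and your proposed leverage is that any $x\in X$ has at least $(C_Hs)^{2h}$ neighbours inside $N(T)$, so that if $W^{(i)}_T$ ``still contains a large share'' of $N(T)$ then $x$ automatically has many neighbours in $W^{(i)}_T$. For this to give anything you would need $|N(T)\setminus W^{(i)}_T|\ll (C_Hs)^{2h}$. But that is impossible as soon as $I_i\setminus T\ne\emptyset$: for any $v_j\in I_i\setminus T$ with $|T|\le k-1$ one has $N(T)\cap N(v_j)=N(T\cup\{v_j\})$, which by the very hypothesis you are invoking has size at least $(C_Hs)^{2h}$, so $|N(T)\setminus W^{(i)}_T|\ge(C_Hs)^{2h}$ always. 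Thus the ``large share'' heuristic yields no lower bound on $|W^{(i)}_T\cap N(v_{i+1})|$ whatsoever. The underlying obstruction is that the constraint ``$v_j$ does not cover too much of $N(T\cup\{v_{i+1}\})$'' involves a vertex $v_{i+1}$ chosen \emph{after} $v_j$, and a one-vertex-at-a-time scheme has no mechanism to enforce it at the moment $v_j$ is selected.

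The paper's proof abandons greed entirely. It first passes to an $(\eps,\delta)$-superspread hypergraph $\mathcal I$ of independent $t$-sets in $X$ (with $t\in[a,2a]$) via a density-increment cleaning of the hypergraph of independent sets. Then for each pair $(i,\ell)$ it declares an ordered $\ell$-tuple $(v_1,\dots,v_\ell)$ bad if $v_\ell$ shrinks $N(v_1,\dots,v_i)\setminus\bigcup_{j=i+1}^{\ell-1}N(v_j)$ below a $\tfrac{1}{2s}$-fraction; by Claim~\ref{claim:simple} each such family has branching at most $s$ in the last coordinate. The superspread condition then bounds the fraction of members of $\mathcal I$ containing any bad tuple in any order, and since the total is below $1$ some $I\in\mathcal I$ avoids them all; any such $I$ is rich. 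The point is that the superspread machinery handles all orderings of all $(\le k)$-subsets simultaneously at a cost that is only linear in $a$ in the exponent of $s$---exactly the quantitative hurdle you correctly identify but do not resolve.
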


The techniques discussed in this section may be of independent interest and therefore we state them in a slightly more general setting than needed. We begin by introducing some terminology. Given a hypergraph $\cH$ and $S\subset V(\cH)$, we define the \textit{degree} of the set $S$ as the number of edges of $\cH$ containing $S$, i.e. $\deg_{\cH}(S) := |\{e\in E(\cH): S\subseteq e\}|$. Furthermore, we say that an edge $e\in E(\cH)$ is \textit{$\delta$-heavy} if it contains a set $S\subset e$ and a vertex $v\in e \setminus S$ for which $\deg_\cH(S\cup \{v\})\ge \delta \deg_\cH(S)$. Otherwise, we say that $e$ is \textit{$\delta$-light}. Finally, we say a hypergraph $\cH$ is \textit{$(\eps,\delta)$-superspread} if at most $\eps e(\cH)$ edges of $\cH$ are $\delta$-heavy. The reason we call such hypergraphs ``superspread" is that a $(0,\delta)$-superspread hypergraph must satisfy $\deg_\cH(S)\le \delta^{|S|}e(\cH)$ for all sets $S\subset V(\cH)$, implying that $\cH$ is ``$\delta$-spread'' in the terminology of the recent breakthrough work \cite{ALWZ21} on the Sunflower conjecture.

The definition of $(\eps, \delta)$-superspread hypergraphs is designed with the following goal in mind. Suppose $\fB$ is a collection of ordered $\ell$-tuples of distinct vertices of $\cH$ with the property that for any $v_1, \dots, v_{\ell-1}\in V(\cH)$, there are at most $s$ vertices $v_\ell$ for which $(v_1, \dots, v_\ell)\in \fB$. Then, we can show that many edges of an $(\eps, \delta)$-superspread hypergraph $\cH$ do not contain any $\ell$-tuples of $\fB$. 

Let us now give an imprecise explanation of how these notions will come into play in our proof. We consider the hypergraph $\cH$ which consists of many independent sets of a given size inside the set $X\subseteq V(G)$. The first step is to turn this hypergraph into a $(\eps, \delta)$-superspread hypergraph by cleaning it. Then, we define a collection of ``bad" tuples $\fB$ by saying that $(v_1, \dots, v_\ell)$ is bad if $N(v_\ell)$ contains too many vertices from the set $S=N(v_1, \dots, v_i)\backslash \bigcup_{j=i+1}^{\ell-1} N(v_j)$, for some fixed index $i$. Thus, for any fixed $v_1, \dots, v_{\ell-1}$ for which the set $S$ is not too small, Claim~\ref{claim:simple} shows that there are at most $s$ vertices $v_\ell$ making the $\ell$-tuple $(v_1, \dots, v_\ell)$ bad. Then, we show that there exists an independent set avoiding all bad tuples, which turns out to be sufficient for the independent set to be rich. 

Now, we start presenting the precise statements and proofs. First, we show that every $r$-uniform hypergraph with sufficiently many edges can be turned into a $(\eps, \delta)$-superspread hypergraph through a cleaning procedure. 

\begin{proposition}\label{prop:hypergraph cleaning}
Let $r \ge a\ge 1$ be fixed integers and let $\eps, \delta\in (0, 1)$ be real numbers. Suppose that $\cH$ is an $r$-uniform hypergraph with $n$ vertices and at least $C_r(\eps\delta)^{-r}n^{a-1}$ edges, where $C_r=r^r2^{r^2}$. Then, there exists an integer $t\geq a$ and a nonempty $t$-uniform $(\eps, \delta)$-superspread hypergraph $\cH'$ on the vertex set $V(\cH)$ such that every edge $e'\in E(\cH')$ is contained in some edge $e\in E(\cH)$.
\end{proposition}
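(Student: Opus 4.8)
The plan is to prove Proposition~\ref{prop:hypergraph cleaning} by an iterative cleaning procedure that terminates exactly when the hypergraph becomes superspread. Starting from $\cH_0 = \cH$, as long as the current hypergraph $\cH_i$ (say $r_i$-uniform with $m_i$ edges) is \emph{not} $(\eps,\delta)$-superspread, more than $\eps m_i$ of its edges are $\delta$-heavy, so one of finitely many ``witness types'' is common: there is a fixed pair $(S, v)$-\emph{shape}, i.e.\ a choice of which coordinate plays the role of $v$ and which coordinates form $S$, such that a $1/2^{r_i}$-fraction of the heavy edges use that shape. For those edges, heaviness says $\deg_{\cH_i}(S \cup \{v\}) \ge \delta\deg_{\cH_i}(S)$. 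I would then pass to the link/shadow: fix such a set $S$ of size $j = |S|$ together with the vertex $v$ achieving the maximum codegree, and replace $\cH_i$ by the $(r_i - j - 1)$-uniform hypergraph $\cH_{i+1}$ whose edges are $e \setminus (S \cup \{v\})$ over all $e \in E(\cH_i)$ containing $S \cup \{v\}$ with the prescribed shape. (Equivalently: first restrict to edges containing $S$, then among those pick the most popular extra vertex $v$, then delete $S \cup \{v\}$ from every surviving edge.) This keeps uniformity $\ge a$ only if we are careful, so the procedure must also stop if deleting would bring uniformity below $a$; I will need to check that the edge count stays positive before that happens, which is where the hypothesis $e(\cH) \ge C_r(\eps\delta)^{-r} n^{a-1}$ is used.

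The key bookkeeping is tracking how $m_i$ and $r_i$ evolve. Each cleaning step from uniformity $r_i$ to $r_{i+1} = r_i - |S| - 1$ loses at most a factor coming from three sources: a factor $2^{r_i}$ from choosing the shape among at most $2^{r_i}$ possibilities, a factor $\eps$ from restricting to heavy edges, and a factor roughly $\delta^{|S|+1}/n^{|S|}$ — more precisely, restricting to edges through $S$ keeps a $\deg(S)/m_i$ fraction, and since among those the best vertex $v$ has relative codegree $\ge \delta$, we keep at least $\delta\deg(S) \ge \delta \cdot (\text{something})$ edges; the crude bound I would actually use is that fixing $S$ costs a factor at most $n^{|S|}$ (there are at most $n^{|S|}$ choices of $S$, so some $S$ lies in $\ge (\text{edges of that shape})/n^{|S|}$ of them) and then the $v$-step costs only the factor $\delta$. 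Multiplying, a step that decreases uniformity by $d = |S|+1 \ge 2$ multiplies the edge count by at least $(\eps/2^{r_i}) \cdot \delta \cdot n^{-(d-1)}$. Iterating and using $\sum (d_i - 1) \le r - a$ (the total drop in uniformity is at most $r - a$ since we stop before going below $a$), the accumulated $n$-power loss is at most $n^{r-a}$, the accumulated $\delta$ loss is at most $\delta^{r}$, the accumulated $\eps$ loss is at most $\eps^r$, and the accumulated $2^{r_i}$ loss is at most $\prod 2^{r} \le 2^{r^2}$. So the final hypergraph has at least
\[
\frac{e(\cH)}{2^{r^2}} \cdot \eps^{r}\delta^{r} \cdot n^{-(r-a)} \ \ge\ \frac{C_r (\eps\delta)^{-r} n^{a-1}}{2^{r^2}} \cdot (\eps\delta)^{r} n^{a-1-r+1}\cdot(\text{adjust})
\]
edges; choosing $C_r = r^r 2^{r^2}$ (the extra $r^r$ absorbing the fact that there are at most $r$ cleaning steps, each of which I bound slightly lossily, and the $n^{a-1}$ versus $n^{a-1-(r-a)}$ comparison) makes this at least $1$, so the process cannot die before uniformity drops to $a$, and when it halts we have a nonempty $t$-uniform $(\eps,\delta)$-superspread hypergraph with $t \ge a$, every edge of which is a subset of an original edge of $\cH$ (each cleaning step only deletes vertices from edges). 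This is the desired $\cH'$.

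The step I expect to be the main obstacle is getting the constant $C_r = r^r 2^{r^2}$ to come out cleanly — in particular, being honest about the $n^{|S|}$ versus $\delta$ split. The subtlety is that I must \emph{not} pay a factor $n$ for the chosen vertex $v$ (only for $S$): the whole point is that after fixing $S$, the vertex $v$ is chosen to maximize codegree, so among the $\deg_{\cH_i}(S)$ edges through $S$ a $\delta$-fraction already survives the $v$-restriction with no $n$-loss, which is exactly what prevents the $n$-powers from overwhelming the $n^{a-1}$ we started with. I would also double-check the edge case where a cleaning step would reduce uniformity below $a$: there the correct move is to stop and observe that a hypergraph of uniformity $\ge a$ with at least one edge (which we have maintained) that fails to be $(\eps,\delta)$-superspread still admits \emph{one more} partial step, but since we only need uniformity $\ge a$ in the conclusion, it is cleanest to redefine the stopping rule as ``stop as soon as $\cH_i$ is $(\eps,\delta)$-superspread \emph{or} $r_i \le a$'', and then argue separately that if $r_i \le a$ and $\cH_i$ is nonempty then either it is already superspread or $r_i = a$ exactly and we can still perform a harmless final reduction (or simply note $r_i \ge a$ suffices and a nonempty $a$-uniform hypergraph that is not superspread can be cleaned once more only if $a \ge 2$, handled by the same bound). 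Making this boundary argument airtight, while keeping the constant at $r^r 2^{r^2}$, is the delicate part.
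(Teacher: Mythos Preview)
Your approach has a genuine gap, and it is precisely the boundary issue you flag at the end. Passing to the \emph{link} of a fixed pair $(S^*,v^*)$ drops the uniformity by $|S^*|+1$ in a single step, and you have no control over $|S^*|$: it is handed to you by whichever witness type is popular among the heavy edges. Concretely, take $r=3$, $a=2$. If the popular witness size is $|S|=1$, one step already lands you at uniformity $1<a$. Your bookkeeping then gives $m_1 \ge \tfrac{\eps\delta}{8n}\,m_0$, which is only a constant (independent of $n$); this $1$-uniform hypergraph is $(\eps,\delta)$-superspread as soon as it has more than $1/\delta$ edges, so the procedure halts with $t=1<a$ and the conclusion fails. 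The proposed patches (``stop when $r_i\le a$ and argue separately'') do not help: if you stop at $r_i=a$ with a non-superspread $\cH_i$, you have not produced what the proposition asks for, and any further step may again overshoot. The underlying reason is that fixing an \emph{actual} set $S^*$ costs you a factor $n^{|S|}$ in edges while buying you $|S|+1$ in uniformity; over the whole run this trades $n^{\sum |S_k|}$ against uniformity, but $\sum |S_k|$ can be as large as $r-1$, so the edge count can drop below $1$ well before the uniformity reaches $a$.

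The paper avoids this entirely by a different reduction. It first passes to an $r$-partite subhypergraph (so that ``the coordinate of $v$'' makes sense), and then, instead of fixing $S$ and $v$ and taking a link, it \emph{projects out the single part containing $v$}: the new edges are $\{e\setminus V_i:\ e\in E(\cH)\}$. The key lemma shows this projection keeps at least an $\tfrac{\eps\delta}{r2^r}$-fraction of the edges, because one can sum the inequality $\deg_{\cH_{[r]\setminus\{i\}}}(S)\ge \max_v \deg_\cH(S\cup\{v\})\ge \delta\deg_\cH(S)$ over \emph{all} heavy $S$'s simultaneously, rather than pigeonholing to a single $S^*$ at cost $n^{|S|}$. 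This way the uniformity drops by exactly $1$ per step and the edge loss is an $n$-free constant factor; since a $t$-uniform hypergraph has fewer than $n^t$ edges, the running lower bound $e(\cH_i)>n^{a-1}$ forces termination at some $t\ge a$. The missing idea in your outline is this global projection (and the $r$-partite preprocessing that makes it meaningful) in place of the local link.
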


\noindent
When proving this proposition, we may assume that $\cH$ is an $r$-partite hypergraph. Indeed, we can find an $r$-partite subhypergraph with at least $\frac{r!}{r^r}$ proportion of the edges, and passing to such a subhypergraph only effects the constant $C_r$. Then, for an $r$-partite $r$-uniform hypergraph $\cH$, with an $r$-partition given by $V(\cH)=V_1\cup \dots \cup V_r$, and for an index set $I\subset [r]$, we define the \textit{restriction} of $\cH$ to $I$, denoted by $\cH_I$, as the hypergraph with the vertex set $V_I=\bigcup_{i\in I} V_i$ and the edge set $E(\cH_I)=\{e\cap V_I:e\in E(\cH)\}$.

The main idea is to show that, as long as $\cH_I$ is not $(\eps, \delta)$-superspread, one can eliminate an index $i$ from $I$ without decreasing the number of edges of $\cH_I$ significantly. Then, the bound on the number of edges of $\cH$ allows us to show that this density increment procedure stops with some uniformity $t\geq a$. We begin by showing this statement in the form of an auxiliary lemma.

\begin{lemma}\label{lemma:density increment}
Let $\cH$ be an $r$-partite $r$-uniform hypergraph with $n$ vertices which is not $(\eps, \delta)$-superspread. Then, there exists an index $i\in [r]$ such that $e(\cH_{[r]\backslash \{i\}})\geq \frac{\eps \delta}{r2^r}e(\cH)$.
\end{lemma}
\begin{proof}
Since $\cH$ is not $(\eps, \delta)$-superspread, at least an $\eps$-fraction of its edges are $\delta$-heavy. In other words, for at least $\eps e(\cH)$ edges $e\in E(\cH)$, there exists a set $S_e\subset e$ and a vertex $v_e\in e\backslash S_e$ such that $\deg_{\cH}(\{v_e\}\cup S_e)\geq \delta \deg_{\cH} (S_e)$. Let us record for each edge, from which parts $V_1, \dots, V_r$ the vertices of $S_e$ and the vertex $v_e$ come from. Formally, for each $\delta$-heavy edge $e$, we can define the index $i_e\in [r]$ such that $v_e=e\cap V_{i_e}$. Similarly, we define the set $J_e\subseteq [r]$ for which $S_e=e\cap \bigcup_{j\in J_e} V_j$. By the pigeonhole principle, there exist an index set $J=\{j_1, \dots, j_{m}\}\subseteq [r]$ and an index $i\in [r]$ such that $S_e=e\cap V_J$ and $\{v_e\}=e\cap V_i$ for at least an $\frac{\eps}{r 2^r}$-fraction of the edges of $\cH$. Let us denote the set of such edges by $E^*$.

Without loss of generality, we may assume that $J=[m]$ and $i=m+1$. We claim  that $\cH_{[r]\backslash \{m+1\}}$ has at least $\frac{\eps \delta}{r 2^r}e(\cH)$ edges, or equivalently, the edges of $\cH$ form at least $\frac{\eps \delta}{r 2^r}e(\cH)$ distinct intersections with $V(\cH_{[r]\backslash \{m+1\}})=\bigcup_{i\neq m+1} V_i$. The number of edges of the hypergraph $\cH_{[r]\backslash \{m+1\}}$ can be computed as the sum of the degrees of all $m$-tuples of vertices from $V_{1}\times\cdots\times V_{m}$. Formally, we have 
\[e(\cH_{[r]\backslash \{m+1\}})=\sum_{v_{1}\in V_{1}, \dots, v_{m}\in V_m} \deg_{\cH_{[r]\backslash \{m+1\}} }(v_{1}, \dots, v_m).\]
Observe that the degrees $\deg_{\cH_{[r]\backslash \{m+1\}} }(v_{1}, \dots, v_m)$ are lower bounded by \[\deg_{\cH_{[r]\backslash \{m+1\}} }(v_{1}, \dots, v_m)\geq \max_{v\in V_{m+1}}\deg_{\cH}(v_{1}, \dots, v_m, v).\]
Furthermore, for each $\delta$-heavy edge $e\in E^*$ given by $e=(v_1, \dots, v_r)$, we have $\delta\deg_{\cH}(v_1, \dots, v_m)\leq  \deg_{\cH}(v_1, \dots, v_{m+1})$. Thus, for every $m$-tuple $(v_1, \dots, v_m)$ contained in some $\delta$-heavy edge $e\in E^*$, one has 
\[\deg_{\cH_{[r]\backslash \{m+1\}} }(v_{1}, \dots, v_m)\geq \deg_{\cH}(v_{1}, \dots, v_{m+1})\geq \delta \deg_{\cH}(v_{1}, \dots, v_m).\]

Thus, we can lower bound the number of edges in $\cH_{[r]\backslash\{m+1\}}$ by restricting the sum over only those tuples $v_1\in V_1, \dots, v_m\in V_m$ for which there is a $\delta$-heavy edge $e\in E^*$ containing them. Then, we get
\begin{align*}
e(\cH_{[r]\backslash \{m+1\}})&\geq \sum_{\substack{v_{1}\in V_{1}, \dots, v_{m}\in V_m \\ \exists e\in E^*:v_1, \dots, v_m\in e}} \deg_{\cH_{[r]\backslash \{m+1\}} }(v_{1}, \dots, v_m)\\
&\geq \sum_{\substack{v_{1}\in V_{1}, \dots, v_{m}\in V_m \\  \exists e\in E^*:v_1, \dots, v_{m}\in e}} \delta\deg_{\cH }(v_{1}, \dots, v_m)\\
&\geq \delta |E^*|\geq \frac{\eps \delta}{r 2^r} e(\cH).
\end{align*}
This finishes the proof.
\end{proof}

Now we are ready to prove Proposition~\ref{prop:hypergraph cleaning}, which shows that we can restrict every hypergraph to an $(\eps, \delta)$-superspread hypergraph.

\begin{proof}[Proof of Proposition~\ref{prop:hypergraph cleaning}.]
As suggested in the short discussion following the statement of Proposition~\ref{prop:hypergraph cleaning}, by randomly partitioning the vertex set of $\cH$ into $r$ parts, we may pass to a hypergraph $\cH_0\subseteq \cH$ which is an $r$-partite $r$-uniform hypergraph with at least $\frac{r!}{r^r} e(\cH)$ edges. Let $V_1,\dots,V_r$ be the parts of $\cH_0$.

Lemma~\ref{lemma:density increment} shows that if $\cH_0$ is not $(\eps, \delta)$-superspread, one can eliminate one of the parts $V_i$ and obtain an $(r-1)$-partite $(r-1)$-uniform hypergraph $\cH_1$ with at least $\frac{\eps \delta}{r2^r}e(\cH_0)$ edges. In general, this procedure can be repeated, thus obtaining a sequence of hypergraphs $\cH_0, \cH_1, \dots, \cH_m$, where $\cH_i$ is an $(r-i)$-partite $(r-i)$-uniform hypergraph with \[e(\cH_i)\geq \frac{(\eps \delta)^i}{r(r-1)\cdots (r-i+1)}2^{-\sum_{k=r-i+1}^r k}\cdot e(\cH_0)> \frac{(\eps\delta)^{r}}{r!} 2^{-r^2} \left(\frac{r!}{r^r}C_r (\eps \delta)^{-r} n^{a-1}\right)\geq n^{a-1}.\]

Since we have $e(\cH_i)>n^{a-1}$ for all $i$, the process must stop at uniformity $t=r-i\geq a$, as a $t$-uniform hypergraph on $n$ vertices has less than $n^{t}$ edges. Thus, we find a $(\eps, \delta)$-superspread hypergraph $\cH_i$ which satisfies all requirements.
\end{proof}

In the next proposition, we show how to use the fact that $\cH$ is superspread to bound the number of edges of $\cH$ which contain ``bad'' tuples from certain structured collections $\mathfrak{B}$.

\begin{proposition}\label{prop:spread counting}
Let $\cH=(V,E)$ be a $t$-uniform $(\eps,\delta)$-superspread hypergraph and let $\mathfrak{B}$ be a collection of ``bad" ordered $\ell$-tuples of distinct elements of $V(\cH)$ satisfying the property that for any $v_1, \dots, v_{\ell-1}\in V(\cH)$, there exist at most $s$ vertices $v_\ell$ for which $(v_1, \dots, v_\ell)\in \fB$. Then, at most $(\eps+t! s \delta)e(\cH)$ edges of $\cH$ contain a tuple of $\fB$.
\end{proposition}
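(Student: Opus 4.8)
The plan is to fix a bad tuple $(v_1,\dots,v_\ell)\in\fB$ contained in an edge $e\in E(\cH)$, and to charge this event either to the $\delta$-heaviness of $e$, or to the last coordinate $v_\ell$ being one of the few ``dangerous'' vertices determined by $v_1,\dots,v_{\ell-1}$. More precisely, I want to count, over all edges $e$, all ordered $\ell$-tuples of distinct vertices of $e$ that lie in $\fB$. Let me call an edge $e$ \emph{bad} if it contains some tuple of $\fB$; the goal is to bound the number of bad edges by $(\eps + t!\,s\,\delta)e(\cH)$. Since at most $\eps e(\cH)$ edges are $\delta$-heavy, it suffices to bound the number of \emph{$\delta$-light} bad edges by $t!\,s\,\delta\,e(\cH)$.

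So suppose $e$ is $\delta$-light and contains a bad tuple $(v_1,\dots,v_\ell)$; note $\ell\le t$ since the $v_i$ are distinct elements of $e$. First I would sum over the choice of the prefix: fix an ordered $(\ell-1)$-tuple $(v_1,\dots,v_{\ell-1})$ of distinct vertices. By the defining property of $\fB$, there are at most $s$ choices of $v_\ell$ with $(v_1,\dots,v_\ell)\in\fB$. For each such completion, I want to count the $\delta$-light edges $e\supseteq\{v_1,\dots,v_\ell\}$. The key point is that $e$ being $\delta$-light forces, for the set $S=\{v_1,\dots,v_{\ell-1}\}$ and the vertex $v_\ell\in e\setminus S$, the inequality $\deg_\cH(\{v_1,\dots,v_\ell\})<\delta\deg_\cH(\{v_1,\dots,v_{\ell-1}\})$; otherwise $e$ would be $\delta$-heavy by taking this very $S$ and this $v$. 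Hence the number of edges through a fixed $\{v_1,\dots,v_\ell\}$ that are $\delta$-light is at most $\delta\deg_\cH(v_1,\dots,v_{\ell-1})$.

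Now I put it together with a double-counting argument. Summing over all ordered $(\ell-1)$-tuples of distinct vertices, over the $\le s$ valid extensions $v_\ell$, and over the $\le \delta\deg_\cH(v_1,\dots,v_{\ell-1})$ light edges containing them, the total count of (ordered $(\ell-1)$-prefix, extension, $\delta$-light bad edge) triples is at most
\[
\sum_{(v_1,\dots,v_{\ell-1})} s\cdot \delta\,\deg_\cH(v_1,\dots,v_{\ell-1}) \;=\; s\,\delta\sum_{(v_1,\dots,v_{\ell-1})}\deg_\cH(v_1,\dots,v_{\ell-1}).
\]
The last sum, over ordered tuples of $\ell-1$ distinct vertices, counts each edge $e$ at most $(\ell-1)!\binom{t}{\ell-1}\le t!$ times (choosing an ordered $(\ell-1)$-subset of its $t$ vertices), so it is at most $t!\,e(\cH)$. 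Therefore the number of (prefix, extension, light bad edge) triples is at most $t!\,s\,\delta\,e(\cH)$; since every $\delta$-light bad edge is counted at least once (it contains at least one bad tuple, whose prefix and extension witness it), the number of $\delta$-light bad edges is at most $t!\,s\,\delta\,e(\cH)$. Adding the at most $\eps e(\cH)$ $\delta$-heavy edges gives the claimed bound $(\eps+t!\,s\,\delta)e(\cH)$.

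I don't anticipate a genuine obstacle here; the only point requiring care is the bookkeeping of which set plays the role of $S$ in the definition of $\delta$-light/heavy — one must make sure that choosing $S=\{v_1,\dots,v_{\ell-1}\}\subset e$ and $v=v_\ell\in e\setminus S$ is a legitimate instance of the heaviness test, which it is precisely because $v_1,\dots,v_\ell$ are distinct and all lie in $e$. A secondary subtlety is ensuring $\ell\le t$ so that such distinct tuples can fit inside an edge; if $\ell>t$ no edge contains a bad tuple and the statement is vacuous. The factor $t!$ (rather than, say, $\ell!$) is a harmless over-count that keeps the statement clean.
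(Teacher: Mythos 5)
Your proof is correct and follows essentially the same route as the paper: split into $\delta$-heavy edges (bounded by $\eps e(\cH)$) and $\delta$-light bad edges, bound the latter by observing that $\delta$-lightness forces $\deg_\cH(v_1,\dots,v_\ell)<\delta\deg_\cH(v_1,\dots,v_{\ell-1})$, and double-count over ordered $(\ell-1)$-tuples to obtain the factor $(\ell-1)!\binom{t}{\ell-1}\le t!$. The paper phrases the count as a sum of bad-edges-per-prefix rather than triples, but this is the same argument.
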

\begin{proof}
Since $\cH$ is a $(\eps, \delta)$-superspread hypergraph, at most $\eps e(\cH)$ edges of $\cH$ are $\delta$-heavy. Thus, it is enough to show that among the $\delta$-light edges of $\cH$, at most $t! s \delta e(\cH)$ contain a tuple of $\fB$. To this end, say that an edge of $\cH$ is bad if it is $\delta$-light and contains some tuple of $\fB$. Observe that the number of $\delta$-light edges of $\cH$ containing a bad $\ell$-tuple $(v_1, \dots, v_\ell)\in \fB$ can be bounded by $\deg_\cH(v_1, \dots, v_\ell)\leq \delta\deg_{\cH}(v_1, \dots, v_{\ell-1})$, where the inequality follows from the definition of $\delta$-lightness. Furthermore, for any fixed $(\ell-1)$-tuple of distinct vertices $(v_1, \dots, v_{\ell-1})$, there exist at most $s$ vertices $v_\ell$ completing it to a bad $\ell$-tuple $(v_1, \dots, v_{\ell-1}, v_\ell)\in \fB$. Thus, we conclude that the number of bad edges of $\cH$ which contain a fixed $(\ell-1)$-tuple $(v_1, \dots, v_{\ell-1})$ is at most $\delta s \deg_{\cH}(v_1, \dots, v_{\ell-1})$.

Hence, the total number of bad edges can be bounded by \[\delta s\sum_{\text{distinct }v_1, \dots,v_{\ell-1}} \deg_{\cH}(v_1, \dots, v_{\ell-1})=\delta s (\ell-1)!\binom{t}{\ell-1}e(\cH).\] The last equality is the consequence of double counting, as the sum $\sum \deg_{\cH}(v_1, \dots,v_{\ell-1})$ counts the number of pairs $(e, (v_1, \dots, v_{\ell-1}))$ for which the vertices $v_1, \dots, v_{\ell-1}$ belong to $e$ and $e\in E(\cH)$. Since every edge $e$ of cardinality $t$ contains exactly $(\ell-1)!\binom{t}{\ell-1}$ ordered $(\ell-1)$-tuples, the equality follows. Since $\delta s (\ell-1)!\binom{t}{\ell-1}e(\cH)<\delta s t! e(\cH)$, we conclude that the number of bad edges is at most $\delta s t! e(\cH)$, finishing the proof.
\end{proof}

\begin{corollary}\label{cor:spread independent sets}
Let $s, a\ge 1$ be integers. Let $G$ be a graph which does not contain $K_{s,s}$ and let $X\subseteq V(G)$ be a subset of at least $|X|\ge (2a)^{4a+10}(4s)^{4a+2}$ vertices. Then, there exists some $t\in [a,2a]$ and a non-empty collection $\mathcal{I}$ of independent sets of size $t$ in $X$ such that $\mathcal{I}$ is an $(\eps,\delta)$-superspread hypergraph, where $\eps=(2a)^{-2}$ and $\delta=(2a)^{-2(a+1)}s^{-1}$.
\end{corollary}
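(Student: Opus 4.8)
The plan is to deduce Corollary~\ref{cor:spread independent sets} from Proposition~\ref{prop:hypergraph cleaning} applied to a suitable hypergraph of independent sets. First I would set $r=2a$ and build an $r$-uniform hypergraph $\cH$ on vertex set $X$ whose edges are the independent sets of size $2a$ inside $X$. To guarantee that $\cH$ has enough edges to feed into Proposition~\ref{prop:hypergraph cleaning}, I would use the Ramsey/Kővári–Sós–Turán-type fact that a $K_{s,s}$-free graph on $|X|$ vertices has many independent sets of any bounded size: since $G$ is $K_{s,s}$-free, $G[X]$ contains no $K_{s,s}$, hence no $K_{s}$ with its full neighbourhood, and a standard counting argument (or a direct greedy/probabilistic estimate) shows that $G[X]$ contains at least $\Omega\big((|X|/(2s))^{2a}\big)$ independent sets of size $2a$. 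I would make this quantitative enough that $e(\cH)\ge C_{2a}(\eps\delta)^{-2a}|X|^{a-1}$ with $\eps=(2a)^{-2}$ and $\delta=(2a)^{-2(a+1)}s^{-1}$; here the hypothesis $|X|\ge (2a)^{4a+10}(4s)^{4a+2}$ is exactly what is needed to absorb the factors $C_{2a}=(2a)^{2a}2^{4a^2}$, $(\eps\delta)^{-2a}$, and the $(2s)^{2a}$ loss from counting independent sets, while keeping one extra power of $|X|$ to beat $|X|^{a-1}$.

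Next I would apply Proposition~\ref{prop:hypergraph cleaning} with $r=2a$ to obtain an integer $t\ge a$ and a nonempty $t$-uniform $(\eps,\delta)$-superspread hypergraph $\cH'$ on $X$ such that every edge of $\cH'$ is contained in an edge of $\cH$. Since the edges of $\cH$ are independent sets of size $2a$ and independence is hereditary, every edge of $\cH'$ is itself an independent set in $G$; and $t\le r=2a$ by uniformity of a hypergraph on a finite vertex set, so $t\in[a,2a]$ as claimed. Taking $\mathcal{I}=E(\cH')$ then gives the desired collection.

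The main obstacle I expect is the bookkeeping in the lower bound on the number of independent $2a$-sets, and making sure the stated bound on $|X|$ suffices. The cleanest route is probably: order $X$ arbitrarily and greedily/probabilistically, or use the simple bound that in a $K_{s,s}$-free (indeed $K_s$-free-in-neighbourhoods) graph the independence number of $X$ is at least $|X|/(2s)$ — more robustly, that one can pick an independent set of size $2a$ by repeatedly deleting a vertex of near-maximum degree, losing a factor $2s$ per step by Claim~\ref{claim:simple}-type reasoning, so that the number of labelled independent $2a$-sets is at least $\prod_{i=0}^{2a-1}(|X|/(2s)-i)\ge (|X|/(4s))^{2a}$ once $|X|\ge 8as$. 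Dividing by $(2a)!$ and comparing with $C_{2a}(\eps\delta)^{-2a}|X|^{a-1}=(2a)^{2a}2^{4a^2}\cdot (2a)^{2}{}^{2a}\big((2a)^{2(a+1)}s\big)^{2a}|X|^{a-1}$, one checks that $|X|\ge (2a)^{4a+10}(4s)^{4a+2}$ guarantees $(|X|/(4s))^{2a}/(2a)!\ge C_{2a}(\eps\delta)^{-2a}|X|^{a-1}$, since the surplus factor $|X|^{a+1}$ dominates all the polynomial-in-$(2a,s)$ factors under this hypothesis. I would double-check the exponents of $2a$ and $s$ in this inequality carefully, as that is where an off-by-a-constant-in-the-exponent error would hide; everything else is a direct invocation of the two preceding results.
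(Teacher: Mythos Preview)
Your overall plan is exactly the paper's: set $r=2a$, let $\cH$ be the $r$-uniform hypergraph of independent $2a$-sets in $X$, check $e(\cH)\ge C_{2a}(\eps\delta)^{-2a}|X|^{a-1}$, and invoke Proposition~\ref{prop:hypergraph cleaning}. The deduction that $t\le r=2a$ (because every edge of $\cH'$ sits inside an edge of $\cH$) and that edges of $\cH'$ are independent sets is also correct.

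The gap is in your count of independent $2a$-sets. The assertion that a $K_{s,s}$-free graph on $X$ has independence number at least $|X|/(2s)$, or that one can greedily build an independent $2a$-set ``losing a factor $2s$ per step'' to get $\prod_{i=0}^{2a-1}(|X|/(2s)-i)$ labelled independent $2a$-tuples, is false. A single vertex in a $K_{s,s}$-free graph can be adjacent to everything (a star is $K_{2,2}$-free), so after one greedy step the non-neighbour set may be empty; Claim~\ref{claim:simple} only says few vertices are bad, and iterating it correctly shrinks the working set by a factor $2s$ \emph{each} step, giving a product like $\prod_i |X|/(2s)^i$, not $\prod_i(|X|/(2s)-i)$. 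More decisively, the paper's own random construction (discussed after Theorem~\ref{thm:trees}) produces $K_{s,s}$-free graphs on $s^{\Theta(a)}$ vertices with \emph{no} independent set of size $2a$, so no bound of the shape $(|X|/(4s))^{2a}$ can hold in general.

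The paper fills this step with a Ramsey supersaturation argument: since $R(K_{s,s},K_{2a})\le R(K_{2s},K_{2a})\le (2s)^{2a}$, every $(2s)^{2a}$-subset of $X$ contains an independent $2a$-set, and double counting gives $|\cI_{2a}|\ge \binom{|X|}{(2s)^{2a}}/\binom{|X|-2a}{(2s)^{2a}-2a}\ge (|X|/(2s)^{2a})^{2a}$. This weaker count, with the extra $(2s)^{4a^2}$ in the denominator, is precisely what the hypothesis $|X|\ge (2a)^{4a+10}(4s)^{4a+2}$ is calibrated to absorb. Replace your greedy estimate with this and the rest of your argument goes through verbatim.
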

\begin{proof}
Set $r:= 2a$. The main idea is to apply Proposition~\ref{prop:hypergraph cleaning} to the $r$-uniform hypergraph, whose edges are the independent sets of size $2a$ in $X$. In order to do this, we need to verify that there are at least $C_{r}(\eps\delta)^{-r}|X|^{a-1}$ independent sets of size $r$ in $X$, where $C_{r}=r^{r} 2^{r^2}$. 
    
This will simply follow from a supersaturation argument. Namely, by the Erd\H{o}s-Szekeres theorem \cite{ESz}, the Ramsey number of $K_{s, s}$ versus $K_{2a}$ can be upper bounded as $$R(K_{s, s}, K_{r})\leq R(K_{2s}, K_{r})\leq \binom{2s+r-2}{r-1}\leq (2s)^{r}.$$ Hence, every set $Y\subseteq X$ of size $|Y|= (2s)^{r}$ contains a independent set of size $r$ (since $G$ is $K_{s, s}$-free). On the other hand, each independent set of size $r$ belongs to $\binom{|X|-r}{(2s)^{r}-r}$ sets $Y$ of size $|Y|=(2s)^{r}$. Thus, the collection $\mathcal{I}_{r}$ of independent sets of size $r$ in $X$ has cardinality at least 
\begin{align*}
    |\mathcal{I}_{r}| &\geq \frac{\binom{|X|}{(2s)^{r}}}{\binom{|X|-r}{(2s)^{r}-r}}\geq \left(\frac{|X|}{(2s)^{r}}\right)^{r}\geq 
    \left(\sqrt{|X|}\cdot  \frac{r^{r+5}(4s)^{r+1}}{(2s)^{r}}\right)^{r}\\
    &\geq r^{r} 2^{r^2} r^{2r} \big(sr^{r+2}\big)^{r} |X|^a
    \geq C_{r}(\eps\delta)^{-r} |X|^{a-1}.
\end{align*}
This suffices for Proposition~\ref{prop:hypergraph cleaning} to be applied and therefore we find the collection $\mathcal{I}$ satisfying all necessary conditions.
\end{proof}

Finally, we show how to find rich independent sets in the graph $G$.

\begin{proof}[Proof of Proposition \ref{prop:good independent sets}]

The main idea of this proof is to apply Corollary~\ref{cor:spread independent sets} to find a superspread collection of independent sets in $X$. Then, we define a set of ``bad" tuples with the property that if an independent set contains no ``bad" tuples, then it must be a rich independent set.

Let us present this argument formally. Since $|X|\geq (4|A||B| s)^{4|A|+10}$, Corollary~\ref{cor:spread independent sets} implies that there exists a non-empty $(\eps, \delta)$-superspread collection of independent sets $\cI$ of size $t\in [|A|, 2|A|]$ in $X$, where $\eps=(2a)^{-2}$ and $\delta=(2a)^{-2(a+1)}s^{-1}$. Then, we define the $tk$ collections of bad ordered tuples, $\fB_{i, \ell}$ for $i\in [k]$ and $\ell\in [t]$ as follows. The $\ell$-tuple of distinct vertices $(v_1, \dots, v_\ell)$ belongs to $\fB_{i,\ell}$ if it satisfies the following two conditions:
\begin{itemize}
    \item The set $S=N(v_1, \dots, v_i)\backslash \bigcup_{j=i+1}^{\ell-1} N(v_{j})$ contains at least $2s$ vertices, and
    \item $|S\backslash N(v_\ell)|<\frac{1}{2s}|S|$.
\end{itemize}
By Claim~\ref{claim:simple}, for any $(v_1, \dots, v_{\ell-1})$ satisfying the first condition, there are at most $s$ vertices $v_\ell$ for which $(v_1, \dots, v_\ell)\in \fB_{i, \ell}$. Therefore, $\fB_{i, \ell}$ satisfies the requirement of Proposition~\ref{prop:spread counting}, and we conclude that at most $(\eps+t! s\delta)e(\cI)$ independent sets of $\cI$ contain a tuple of $\fB_{i, \ell}$.  

Thus, there are at most $tk(\eps+t!s\delta)|\cI|$ independent sets of $\cI$ which contain a bad tuple from any of the families $\fB_{i, \ell}$. In particular, noting that $tk(\eps+t!s\delta)\leq 2a^2(\frac{1}{4a^2}+\frac{(2a)! s}{(2a)^{2a+2} s})\leq \frac{1}{2}+\frac{1}{4}<1$, there exists an independent set $I_0\in \cI$ containing no bad tuples. Let $I\subseteq I_0$ be an independent set of size $|A|$.

We claim that $I$ is a rich independent set, that is, for any $T\subset I$ of size at most $k$, there are at least $(4|B|s)^{|B|}$ vertices $v\in V(G)\backslash I$ which are adjacent to all vertices of $T$ and non-adjacent to all vertices of $I\backslash T$. Indeed, consider a subset $T\subset I$ with $|T|=i\le k$ and let us enumerate the vertices of $I$ by $v_1, \dots, v_{|A|}$ such that $T=\{v_1, \dots, v_i\}$. By the definition of $X$, any $k$ vertices of $X$ have at least $(C_H s)^{2|V(H)|}$ common neighbours in $G$ and therefore $|N(v_1, \dots, v_i)|\geq (C_H s)^{2|V(H)|}$. We can now show by induction that \[\left|N(v_1, \dots, v_i)\Big\backslash \bigcup_{j=i+1}^\ell N(v_j)\right|\geq (2s)^{-(\ell-i)} (C_H s)^{2|V(H)|}.\]
For $\ell=i$, this is equivalent to the fact $v_1, \dots, v_i$ have many common neighbours, which was stated above. For $\ell>i$, under the assumption $|N(v_1, \dots, v_i)\backslash \bigcup_{j=i+1}^{\ell-1} N(v_j)|\geq (2s)^{-(\ell-i)+1} (C_H s)^{2|V(H)|}\geq 2s$, by recalling the fact that $I$ contains no bad tuples from $\fB_{i, \ell}$, we conclude that 
\[\left|\Big(N(v_1, \dots, v_i)\big\backslash \bigcup_{j=i+1}^{\ell-1} N(v_j)\Big)\backslash N(v_\ell)\right|\geq \frac{1}{2s} \cdot (2s)^{-(\ell-i+1)} (C_H s)^{2|V(H)|}= (2s)^{-(\ell-i)} (C_H s)^{2|V(H)|}.\]
In particular, when $\ell=|I|$, we conclude that the number of vertices of $G$ adjacent to all vertices of $T$ and nonadjacent to all vertices of $I\backslash T$ is at least $(C_H s)^{|V(H)|}\geq (4|B|s)^{|B|}$. This suffices to conclude that $I$ is a rich independent set, completing the proof.
\end{proof}

\subsection{Finishing the proof}

In this section, we put everything together to prove Theorem \ref{thm:bounded deg variant}. In the proof, we use the following standard form of the dependent random choice lemma.

\begin{lemma}[Lemma 2.1 in \cite{FS}]\label{lemma:dep_rand_choice}
Let $\ell, m, k$ be positive integers. Let $G = (V, E)$ be a graph with $|V| = n$ vertices and average degree $d = 2|E(G)|/n$. If there is a positive integer $t$ such that
$$\frac{d^{t}}{n^{t-1}}-\binom{n}{k}\left(\frac{m}{n}\right)^{t}\geq \ell$$
then $G$ contains a subset $X$ of at least $\ell$ vertices such that every $k$ vertices in $X$ have at least $m$ common neighbors.
\end{lemma}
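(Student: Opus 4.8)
The plan is to run the dependent random choice argument of Fox and Sudakov. First I would pick a multiset $T=\{v_1,\dots,v_t\}$ of $t$ vertices of $G$ independently and uniformly at random \emph{with repetition}, and let $A:=\bigcap_{i=1}^{t}N(v_i)$ be their common neighbourhood. Call a $k$-element set $S\subseteq V(G)$ \emph{bad} if it has fewer than $m$ common neighbours; the two quantities to estimate are $\mathbb{E}|A|$ and the expected number $\mathbb{E}Y$ of bad $k$-subsets contained in $A$.

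For the first estimate, a fixed vertex $u$ lies in $A$ exactly when every $v_i$ is a neighbour of $u$, so $\mathbb{P}(u\in A)=(d(u)/n)^{t}$ and hence $\mathbb{E}|A|=\sum_{u\in V}(d(u)/n)^{t}$; convexity of $x\mapsto x^{t}$ then gives $\mathbb{E}|A|\ge n(d/n)^{t}=d^{t}/n^{t-1}$, where $d=2|E|/n$. For the second, if $S$ is bad then $S\subseteq A$ forces every $v_i$ to lie in the common neighbourhood of $S$, an event of probability less than $(m/n)^{t}$; summing over the at most $\binom{n}{k}$ choices of $S$ gives $\mathbb{E}Y<\binom{n}{k}(m/n)^{t}$. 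By linearity of expectation, $\mathbb{E}(|A|-Y)\ge d^{t}/n^{t-1}-\binom{n}{k}(m/n)^{t}\ge\ell$, so we may fix an outcome of $T$ with $|A|-Y\ge\ell$. Deleting one vertex from each bad $k$-subset of $A$ produces a set $X\subseteq A$ with $|X|\ge|A|-Y\ge\ell$ that contains no bad $k$-subset, i.e. every $k$ vertices of $X$ have at least $m$ common neighbours, which is the desired conclusion.

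There is no serious obstacle here; the argument is short once set up correctly. The only points needing care are: sampling $T$ with repetition, so that $\{u\in A\}=\bigcap_i\{v_i\in N(u)\}$ is genuinely a product event and $\mathbb{P}(u\in A)=(d(u)/n)^{t}$ holds exactly; applying Jensen's inequality in the correct direction to pass from $\sum_u (d(u)/n)^t$ to $d^t/n^{t-1}$; and remembering that the final set arises only after the cleaning step, which is precisely why the hypothesis is phrased in terms of $|A|-Y$ rather than $|A|$ alone.
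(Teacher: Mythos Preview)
Your argument is correct and is exactly the standard dependent random choice proof from Fox--Sudakov~\cite{FS}, which the paper cites rather than reproves. There is nothing to add; the paper treats this lemma as a black box, and your write-up fills in precisely the intended proof.
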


\begin{proof}[Proof of Theorem \ref{thm:bounded deg variant}]
(i) We follow the outline presented in the beginning of this section. The first step is to apply Lemma~\ref{lemma:dep_rand_choice} with $\ell=(C_Hs)^{4h+10}$, $m=(C_H s)^{2h}$, and $t=k$, and note that $d\geq (C_H s)^{4h+10}n^{1-1/k}$. The assumption of Lemma~\ref{lemma:dep_rand_choice} is satisfied since \[\frac{d^{t}}{n^{t-1}}-\binom{n}{k}\left(\frac{m}{n}\right)^{t}\geq (C_H s)^{(4h+10)k} - \frac{n^k}{k!}\frac{(C_H s)^{2hk}}{n^k} \geq \ell.\] Hence, we can find a subset $X\subset V(G)$ such that $|X|\geq \ell$ and every $k$-tuple of vertices in $X$ has at least $m$ common neighbors. By Proposition \ref{prop:good independent sets}, $X$ contains a rich independent set of size $a$. Finally, by Lemma \ref{lemma:greedy}, $G$ contains an induced copy of $H$.

(ii) The proof is almost the same as above: apply Lemma~\ref{lemma:dep_rand_choice} with $\ell=m=\sqrt{n} \geq (C_H s)^{4h+10}$, and $t=2k$, and note that $d\geq 2n^{1-1/4k}$. Since \[\frac{d^{t}}{n^{t-1}}-\binom{n}{k}\left(\frac{m}{n}\right)^{t}\geq 2^{2k}\frac{n^{2k-\frac{1}{2}} }{n^{2k-1}} - \frac{n^k}{k!} \left(\frac{\sqrt{n}}{n}\right)^{2k}\geq 2^{2k}\sqrt{n}-1\geq \ell,\] the assumption of Lemma~\ref{lemma:dep_rand_choice} holds and we can find a subset $X\subset V(G)$ such that $|X|\geq \ell\geq (C_H s)^{4h+10}$ and every $k$-tuple of vertices in $X$ has at least $m\geq (C_H s)^{2h}$ common neighbors. From this point, the proof is identical to that of (i).
\end{proof}

\subsection{Lower bounds}
To conclude this section, we briefly sketch a lower bound construction for the induced Tur\'an number of a complete bipartite graph. Namely, in Theorem~\ref{thm:bounded deg variant} we have shown that $\ex^*(n, H, s)\leq (C_H s)^{4|V(H)|+10} n^{2-1/k}$, for some constant $C_H$ depending on the graph $H$. Now, we show that the constant next to $n^{2-1/k}$ indeed needs to grow with $s$ polynomially, at least when $H$ is the complete bipartite graph. 

\begin{proposition}
Let $H = K_{k, \ell}$ where $\ell= (k-1)!$. Then, for all integers $n\geq s\geq 2k!$ we have \[\ex^*(n, H, s) = \Omega_k( s^{1/k} n^{2-1/k}).\]
\end{proposition}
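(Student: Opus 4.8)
The plan is to combine the standard random-deletion lower bound for $\mathrm{ex}(n, K_{k,\ell})$ with the observation that in a graph of bounded independence number, induced $K_{k,\ell}$-freeness is automatic once we also forbid $K_{k,\ell}$ as a subgraph — but this naive route fails because we need many edges. Instead, I would take the incidence-type construction of Brown / Kollár–Rónyai–Szabó style graphs that realize $\mathrm{ex}(n, K_{k,\ell}) = \Omega(n^{2-1/k})$, but perform a \emph{random sparsification} to destroy large complete bipartite subgraphs while preserving the edge count up to a factor of $s^{1/k}$. Concretely, start from a $K_{k,\ell}$-free graph $G_0$ on $m$ vertices with $\Omega(m^{2-1/k})$ edges (for $\ell \geq (k-1)!$ such graphs are known, e.g. the norm-graph construction of Kollár–Rónyai–Szabó, or the earlier construction of Brown for small $k$). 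Take a random subset $W \subseteq V(G_0)$ where each vertex is retained independently with probability $p$, and let $G = G_0[W]$. Then $|W| \approx pm =: n$ and $e(G) \approx p^2 e(G_0) = \Omega(p^2 m^{2-1/k}) = \Omega(p^{1/k} n^{2-1/k})$.

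The key point is to choose $p$ as large as possible while ensuring $G$ is $K_{s,s}$-free: since $G_0$ is already $K_{k,\ell}$-free, the only complete bipartite subgraphs $K_{s,s}$ in $G$ (with $s \geq k$) would have to come from $K_{k,\ell}$-free graphs having large "near-complete-bipartite" structures, but more robustly, one controls this directly. The number of copies of $K_{k,k}$ in $G_0$ is at most $O(n^k)$ (a $K_{k,k}$-free... no — rather, bound the number of $K_{a,b}$'s crudely), and each survives in $G$ with probability $p^{2k}$; more carefully, a $K_{s,s}$ in $G$ forces a $K_{s,s}$ in $G_0$ on the retained vertices, and since $G_0$ is $K_{k,\ell}$-free it cannot contain $K_{k,\ell}$, so any $K_{s,s}$ with $s \geq \ell$ is already excluded from $G_0$ itself. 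Thus for $\ell$ fixed we may even take $p = 1$ up to the trivial bound, but that does not give the $s^{1/k}$ gain; the gain comes precisely from letting $s$ grow and taking $p = \Theta(s^{1/k}/\text{(something)})$...

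Let me restructure: the honest approach is a \emph{union bound over potential $K_{k,k}$'s}. Take $G_0$ to be a $K_{k,\ell}$-free graph on $m$ vertices with $c m^{2-1/k}$ edges. Sparsify by keeping each vertex with probability $p$. The expected number of surviving edges is $p^2 c m^{2-1/k}$. Now count, in $G$, copies of $K_{k, \ell}$: since $G_0$ has none, $G$ has none either — so $G$ is automatically induced-$K_{k,\ell}$-free as long as it is $K_{k,\ell}$-subgraph-free, which it is. Hence we only need $K_{s,s}$-freeness. The number of (not necessarily induced) copies of $K_{s,s}$ in $G_0$: since $G_0$ is $K_{k,\ell}$-free, any $K_{s,s}$ needs $s < \ell$ on at least... actually any $K_{s,s}$ with $s \geq \ell$ already contains $K_{k,\ell}$ (as $s \geq \ell \geq k$), contradiction. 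So $G_0$ has no $K_{\ell,\ell}$, and we would be done with $p=1$ only if $s \geq \ell$ — but the statement wants $s \geq 2k!$ and $\ell \geq (k-1)!$, so $\ell$ can be much smaller than $s$.

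So the real content: pick $m = n \cdot s^{-1/k} \cdot C$ and... no. The cleanest version: let $G_0$ be $K_{k,\ell}$-free on $N$ vertices with $\Omega(N^{2-1/k})$ edges; it has at most $\binom{N}{k}\binom{N}{k} \le N^{2k}$ copies of $K_{k,k}$, but crucially we want to bound $K_{s,s}$'s. The first step is to show $G_0$ has few $K_{k,s}$'s: fixing the $k$-side, the common neighborhood has size $< \ell$, so there are at most $N^k \binom{\ell}{s} = 0$ if $s \ge \ell$. Therefore $G_0$ is $K_{k,\ell}$-free hence has \emph{no} $K_{k,\ell}$, and any $K_{s,s}$ with $s\ge\ell$ is forbidden. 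For $s$ possibly larger than $\ell$ we are already fine. The $s^{1/k}$ improvement must then come from using a \emph{denser} $K_{k,\ell}$-free graph when $s$ is large: namely take the norm graph on $N$ vertices with $N^{1/k}$ "multiplicity" — equivalently, blow up. \textbf{Take a $K_{k,\ell}$-free graph $G_1$ on $n/t$ vertices with $\Omega((n/t)^{2-1/k})$ edges and replace each vertex by an independent set of size $t$, joining two groups completely iff the original vertices were adjacent.} The blow-up $G$ has $n$ vertices and $\Omega(t^2 (n/t)^{2-1/k}) = \Omega(t^{1/k} n^{2-1/k})$ edges. It contains $K_{s,s}$ iff $G_1$ contains $K_{\lceil s/t\rceil, \lceil s/t \rceil}$ roughly, so to keep it $K_{s,s}$-free we need $t \approx s/\ell$, giving $\Omega((s/\ell)^{1/k} n^{2-1/k}) = \Omega_k(s^{1/k} n^{2-1/k})$ since $\ell = \ell(k)$ is a constant. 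Finally, induced $K_{k,\ell}$-freeness: a blow-up of a $K_{k,\ell}$-free graph can contain an \emph{induced} $K_{k,\ell}$ only if... the $\ell$-side lands in distinct groups forming an independent set in $G_1$ and the $k$-side likewise, with complete bipartite connection — that forces $K_{k,\ell}$ in $G_1$ if $k,\ell$ land in $\ge k$ resp. $\ge \ell$ distinct groups; if some collapse into one group they are non-adjacent, fine for independence, and one checks no induced $K_{k,\ell}$ arises because any $\ell$ vertices in one group are independent but then the $k$ neighbors would give $K_{k,\ell}$ in $G_1$ — handling this parity bookkeeping is the one genuinely fiddly point.

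\textbf{Summary of steps.} (1) Start from a $K_{k,\ell}$-free graph $G_1$ on $\Theta(n/t)$ vertices with $\Omega((n/t)^{2-1/k})$ edges, which exists for $\ell \ge (k-1)!$ by the norm-graph / Kollár–Rónyai–Szabó construction. (2) Form the $t$-fold blow-up $G$, with $t = \Theta(s)$ chosen so that $G$ is $K_{s,s}$-free; verify $K_{s,s}$-freeness by noting a $K_{s,s}$ in $G$ pushes down to a small complete bipartite graph in $G_1$. (3) Verify $G$ is induced-$K_{k,\ell}$-free: carefully case on how many blow-up classes the $k$-side and $\ell$-side occupy, using that two vertices in the same class are non-adjacent and that $G_1$ has no $K_{k,\ell}$. (4) Count edges: $e(G) = \Omega(t^2 (n/t)^{2-1/k}) = \Omega_k(s^{1/k} n^{2-1/k})$. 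The main obstacle is step (3) — ensuring the blow-up introduces no induced copy of $K_{k,\ell}$, which requires a short but careful case analysis on collapsed vertices (and may force choosing $t$, or the blow-up sizes, slightly unevenly, or forbidding $\ell$ to be too small relative to $k$, which is exactly why the hypothesis $\ell \ge (k-1)!$ appears). The hypotheses $\ell \geq (k-1)!$ and $s \geq 2k!$ are precisely what make the norm-graph construction available and the blow-up factor $t$ a positive integer comparable to $s$.
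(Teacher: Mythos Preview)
Your overall architecture is right --- take an extremal $K_{k,\ell}$-free graph $G_1$ on $n/t$ vertices and blow it up by a factor $t=\Theta(s)$ --- but the specific choice of blow-up you make (replace each vertex by an \emph{independent set}) fails outright at step~(3). If $uv$ is any edge of $G_1$, then in your blow-up the two classes $U_u,U_v$ are independent sets of size $t$ joined completely, so $G[U_u\cup U_v]\cong K_{t,t}$ as an \emph{induced} subgraph. As soon as $t\ge\ell$ (which happens for all $s$ beyond a constant depending on $k,\ell$), this already contains an induced $K_{k,\ell}$: just take $k$ vertices from one class and $\ell$ from the other. So the ``fiddly bookkeeping'' you anticipate in step~(3) is not merely fiddly --- it is impossible. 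No amount of casework on how many classes the two sides occupy will save this, because the failure occurs when both sides sit inside a single pair of adjacent classes.

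The paper's construction is the same blow-up idea with one change: each vertex of $G_1$ is replaced by a \emph{clique} of size $t$ (with $t=s/(2k!)$), not an independent set. Now two twins are adjacent, so no two vertices from the same side of a putative induced $K_{k,\ell}$ can be twins; and if one vertex from each side were twins, then every other vertex of $G$ would be adjacent to both or neither of them, which is incompatible with the bipartite adjacency pattern of $K_{k,\ell}$ (any third vertex of $K_{k,\ell}$ is adjacent to exactly one of them). Hence an induced copy of $K_{k,\ell}$ in $G$ would use at most one vertex per class and project to an induced $K_{k,\ell}$ in $G_1$, a contradiction. The $K_{s,s}$-freeness and the edge count go through exactly as you describe. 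So your plan is correct up to this single but decisive modification.
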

\begin{proof}
Let $G_0$ be an extremal bipartite $H$-free graph on $n_0=\frac{n}{t}$ vertices, where $t=\frac{s}{2k!}$. By a classical result of Alon, R\'onyai and Szab\'o \cite{ARS99} (see also \cite{Bukh}), there exists such $G_0$ with at least $\Omega(n_0^{2-1/k})$ edges. Then, we let $G$ be a $t$-fold blowup of the graph $G_0$, where every vertex of $G_0$ is replaced by a clique of size $t$ and every edge is replaced by a complete bipartite graph $K_{t, t}$. We say that two vertices in such a $t$-clique are \emph{twins}. The graph $G$ has at most $n_0t\leq n$ vertices and at least $\Omega(t^2 e(G_0))=\Omega( t^{1/k} (n_0t)^{2-1/k} )=\Omega_k(s^{1/k} n^{2-1/k})$ edges. If we show that $G$ has no induced copy of $H$ and no $K_{s, s}$, this implies that $\ex^*(n, H, s)= \Omega_k(s^{1/k}n^{2-1/k})$. 

Let us observe that if $U\subset V(G)$ induces a copy of $H$, then no two vertices of $U$ are twins. If two vertices from the same part of $H$ are embedded into twins, they are adjacent in $G$, which is not possible. On the other hand, if two vertices from different parts of $H$ are embedded into twins, then any other vertex of $G$ is either connected to both of them or to none. Therefore, we conclude that $U$ contains no twins. But then each vertex of $U$ corresponds to a unique vertex in $G_0$, so $G_0$ contains an induced copy of $H$ as well, contradiction.

Finally, one can argue that if $G$ contains $K_{s, s}$ as a subgraph, then $G_0$ must contain $K_{s/t, s/t}=K_{2k!,2k!}$, contradiction. This completes the proof.
\end{proof}

One can slightly improve the above construction when $n=2^{O(s)}$ by replacing vertices in the blow-up by Ramsey graphs avoiding $K_{s/2, s/2}$ and independent sets of size $h$, instead of replacing them by cliques. If one replaces the edges between these vertices with dense graphs which avoid $K_{s/2, s/2}$, the resulting graph has no $K_{s, s}$. Hence, this improved construction shows that $\ex^*(n, H, s)\geq s^{|V(H)|^{1/(k+1)}}n^{2-\frac{1}{k}}$, i.e., that the exponent of $s$ should grow with $|V(H)|$. We omit further details since there is still a large gap between upper and lower bounds.

\section{Trees, induced $C_4$-free subgraphs and the Erd\H{o}s-Hajnal conjecture}\label{sec:applications}

In this section, we prove Theorem \ref{thm:trees}. We also present three short applications of the results obtained in the previous section, proving Theorems~\ref{thm:induced C_4-free}--\ref{thm:half Erdos Hajnal}. 

Let us begin by discussing Theorem~\ref{thm:trees}. The starting point of our proofs is Proposition 7.9 from a recent paper of Gir\~ao and the first author \cite{GH23}, which shows that within every graph of large enough average degree, one can find either an induced subgraph which is $C_4$-free with average degree larger than any constant or an induced subgraph with almost quadratically many edges. 

\begin{proposition}[\cite{GH23}]\label{prop:GHblackbox}
Let $H$ be a bipartite graph and fix an integer $k$ and $\eps>0$. For all sufficiently large $d$, the following is true. Let $G$ be a graph with no induced copy of $H$ and with average degree $d$. Then $G$ has an induced subgraph $G'$ satisfying one of the following: 
\begin{itemize}
    \item $G'$ has no $C_4$ and has average degree at least $k$;
    \item $G'$ has $n'\ge d^{1/10}$ vertices and $e(G')\ge (n')^{2-\eps}$.
\end{itemize}
\end{proposition}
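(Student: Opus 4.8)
The plan is to argue by contrapositive. Suppose $G$ has average degree $d$, contains no induced copy of $H$, and has no induced subgraph of the second type, so that every induced subgraph of $G$ on at least $d^{1/10}$ vertices has fewer than $m^{2-\eps}$ edges on $m$ vertices; I will show that then $G$ contains an induced $C_4$-free subgraph of average degree at least $k$, provided $d$ is large in terms of $H$, $k$ and $\eps$. Since the second alternative only becomes easier to satisfy as $\eps$ increases, I may additionally assume $\eps\le \tfrac{1}{4k^\ast}$, where $k^\ast$ is the least integer such that some side of a bipartition of $H$ has all degrees at most $k^\ast$ (the exponent appearing in Theorem~\ref{thm:bounded deg variant}).

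First I would record a ``large $s$ is available'' reduction: $G$ is $K_{m,m}$-free for $m:=\lceil d^{1/10}\rceil$, since otherwise the $2m$ vertices of a $K_{m,m}$ would induce a subgraph with at least $m^2\ge (2m)^{2-\eps}$ edges (using that $d$ is large), contradicting our assumption. Next, by the theorem of K\"uhn and Osthus \cite{KO} (or the stronger bound in \cite{MPS}), $G$ contains a $C_4$-free subgraph $F$, not necessarily induced, of average degree at least some $D=D(d)$ with $D\to\infty$ as $d\to\infty$; after deleting vertices I may assume $F$ has minimum degree at least $D/2$, and, iterating the extraction if necessary, that $|V(F)|\ge d^{1/10}$. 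What is left is exactly the content of McCarty's induced analogue of this theorem \cite{McCarty}, but performed inside our everywhere-sparse, induced-$H$-free graph: turn $F$ into an \emph{induced} $C_4$-free subgraph without destroying too much average degree.

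To do this, set $W=V(F)$. Then $G[W]\supseteq F$ has at least $\tfrac14|W|D$ edges, so the everywhere-sparse hypothesis gives $e(G[W])<|W|^{2-\eps}$; in particular the ``chord graph'' $D^{\mathrm{ch}}$ of edges of $G[W]$ not lying in $F$ is sparse, with at most $|W|^{2-\eps}$ edges. The goal is to pass to a set $W'\subseteq W$ that is independent in $D^{\mathrm{ch}}$ --- so that $G[W']=F[W']$ is $C_4$-free and induced --- while $F[W']$ keeps average degree at least $k$; this I would obtain by a random-sampling and weight-shifting argument that uses both the sparsity of $D^{\mathrm{ch}}$ and the fact that $F$, being essentially extremal as a $C_4$-free graph, is robust under vertex deletions. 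I would then use the ``no induced $H$'' hypothesis to exclude the obstruction here, via Theorem~\ref{thm:bounded deg variant} applied with $s=m$: no induced subgraph of $G$ on $m'\ge (C_Hm)^{O(|V(H)|)}$ vertices can have as many as $(m')^{2-1/(4k^\ast)}$ edges, and this almost-linear sparsity at every scale should leave the chord graph too thin to prevent $W'$ from being found.

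The step I expect to be the main obstacle is precisely this last one, i.e. making the $C_4$-free subgraph induced. The delicate point is to control the interaction between $F$ and its chord graph and, in particular, to rule out the possibility that every large subset of $W$ still carries many chords even though $G$ avoids both a dense induced subgraph and an induced copy of $H$; a density-increment or regularity-flavoured iteration, together with the bookkeeping needed to keep all the relevant vertex sets above the scale $d^{1/10}$ at which the sparsity hypothesis applies, is what I expect this to require.
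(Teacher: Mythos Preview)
The paper does not prove this proposition at all: it is quoted verbatim as Proposition~7.9 of \cite{GH23} and used as a black box in Section~\ref{sec:applications}. So there is no ``paper's proof'' to compare your attempt against. That said, let me comment on your proposal on its own merits, because it has a real gap.

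Your scheme is: (a) use the contrapositive hypothesis to get that every induced subgraph on $\ge d^{1/10}$ vertices is $(2-\eps)$-sparse; (b) extract, via K\"uhn--Osthus/Montgomery--Pokrovskiy--Sudakov, a not-necessarily-induced $C_4$-free subgraph $F$ of large average degree $D=D(d)$; (c) delete chords by random sampling to make $F$ induced while keeping average degree $\ge k$. The difficulty is that step~(b) and the hypothesis in~(a) live at incompatible scales. The known bounds in \cite{KO,MPS} give $D$ that is only polylogarithmic in $d$; after passing to minimum degree $D/2$, the vertex set $W=V(F)$ may well have size of that order, far below $d^{1/10}$. Your sparsity hypothesis from~(a) therefore says nothing about $G[W]$, and the chord graph can be arbitrarily dense. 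Your line ``iterating the extraction if necessary, that $|V(F)|\ge d^{1/10}$'' does not help: iterating K\"uhn--Osthus only produces smaller subgraphs, not larger ones.

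Even granting $|W|\ge d^{1/10}$, the sparsity bound $e(G[W])<|W|^{2-\eps}$ would still allow the chord graph to have up to $|W|^{2-\eps}$ edges, which for small $\eps$ dwarfs the at most $O(|W|^{3/2})$ edges of the $C_4$-free graph $F$ itself. A naive random-sampling argument cannot kill the chords while retaining a constant average degree in $F$; this is exactly the hard step that \cite{DGHMS23,GH23} work to handle, and you have not indicated how you would do it. Finally, your appeal to Theorem~\ref{thm:bounded deg variant} with $s=m=\lceil d^{1/10}\rceil$ is redundant: it yields only that induced subgraphs on $\ge (C_Hm)^{O(|V(H)|)}$ vertices have at most $(m')^{2-1/4k^\ast}$ edges, which is strictly weaker than the contrapositive hypothesis you already assumed (smaller threshold scale, same exponent since $\eps\le 1/4k^\ast$). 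So the ``no induced $H$'' hypothesis is not actually being used anywhere in your argument beyond what the contrapositive already gives, and the core difficulty remains untouched.
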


\noindent
We note that the \textit{sufficiently large} condition in the above result is not especially quantitative. It would be interesting to determine whether we can take $d\leq k^{O_H(1/\eps)}$.

In either of the cases, our goal is to find an induced copy of our tree $T$ within $G'$. Before we embark on our proof, we first show that it is easy to embed $T$ in $C_4$-free graphs of large average degree. 

\begin{proposition}\label{prop:trees base bound}
Let $T$ be a tree on $t$ vertices. Then $\ex^*(n,T,2)\le 2tn$.
\end{proposition}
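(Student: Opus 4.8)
The plan is to argue by contradiction. Suppose $G$ is a $K_{2,2}$-free (equivalently $C_4$-free) graph on $n$ vertices with $e(G)>2tn$ and with no induced copy of $T$; we will build an induced copy of $T$ greedily and reach a contradiction. Since $e(G)>2tn$, the average degree of $G$ exceeds $4t$, so by the standard fact that a graph of average degree $d$ contains a nonempty subgraph of minimum degree at least $d/2$, we may pass to an induced subgraph $G'$ of minimum degree greater than $2t$. Note that $G'$ is still $C_4$-free and still has no induced copy of $T$.

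Next, fix a ``tree order'' $v_1,\dots,v_t$ of $V(T)$, that is, an ordering in which every $v_i$ with $i\ge 2$ has exactly one neighbour $v_{p(i)}$ with $p(i)<i$ (for instance a BFS or DFS order from an arbitrary root). We construct an embedding $\phi\colon V(T)\to V(G')$ one vertex at a time, maintaining the invariant that $\{\phi(v_1),\dots,\phi(v_i)\}$ induces in $G'$ a copy of $T[\{v_1,\dots,v_i\}]$. Place $\phi(v_1)$ arbitrarily. Given $\phi(v_1),\dots,\phi(v_{i-1})$, we need to pick $\phi(v_i)\in N_{G'}(\phi(v_{p(i)}))$ that is a new vertex and is non-adjacent to every $\phi(v_j)$ with $j<i$ and $j\ne p(i)$; the choice of the tree order guarantees $v_{p(i)}$ has already been embedded, and these two conditions are exactly what is needed to preserve the invariant (an edge to the parent, no edge to any earlier non-parent).

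The crux is to count the forbidden vertices inside $N_{G'}(\phi(v_{p(i)}))$. The previously used images account for at most $i-1$ of them. For each index $j\in\{1,\dots,i-1\}\setminus\{p(i)\}$, the vertices of $N_{G'}(\phi(v_{p(i)}))$ adjacent to $\phi(v_j)$ form the set $N_{G'}(\phi(v_{p(i)}))\cap N_{G'}(\phi(v_j))$, which has size at most $1$ since $G'$ is $C_4$-free; there are $i-2$ such indices $j$. Hence at most $(i-1)+(i-2)=2i-3\le 2t-3$ vertices of $N_{G'}(\phi(v_{p(i)}))$ are forbidden, while $|N_{G'}(\phi(v_{p(i)}))|>2t$, so a valid choice of $\phi(v_i)$ always exists. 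Iterating until $i=t$ produces an induced copy of $T$ in $G'\subseteq G$, a contradiction; therefore $e(G)\le 2tn$.

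I do not expect a genuine obstacle here: the only substantive point is the observation that $C_4$-freeness forces every ``codegree'' $|N(x)\cap N(y)|$ to be at most $1$, which is precisely what keeps the number of forbidden vertices linear in $t$ rather than proportional to the degrees; the remaining care is purely bookkeeping, namely checking that the tree order puts each parent in place before its child and that forbidding adjacency to all earlier non-parent images makes the final embedding induced.
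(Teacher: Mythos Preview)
Your proof is correct and follows essentially the same approach as the paper: pass to an induced subgraph of minimum degree exceeding $2t$ and then embed $T$ one vertex at a time, using that $C_4$-freeness forces $|N(x)\cap N(y)|\le 1$ to bound the number of forbidden neighbours. The only cosmetic difference is that the paper phrases the embedding as an induction on $t$ (delete a leaf, embed $T'$, then extend), whereas you unroll this into a single greedy pass along a tree order; the counting is identical.
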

\begin{proof}
We proceed by induction on the number of vertices, and we note that the statement is trivial when $T$ consists of a single vertex. For general trees, let us fix a host graph $G$ on $n$ vertices which is $C_4$-free and has $e(G)> 2tn$.

We may clean the graph $G$ such that its minimal degree is at least $2t$. Namely, as long as there exists a vertex of degree less than $2t$ in $G$, one may remove it. The maximum number of edges removed in this way is at most $2tn$ and therefore the cleaning procedure ends with a non-empty graph whose minimal degree is at least $2t$. To simplify notation, we still denote this cleaned version of the graph by $G$.

Let $T'$ be a tree on $t-1$ vertices obtained by deleting a leaf $v$ from the tree $T$. We denote by $u$ the unique neighbour of $v$ in $T$. By inductive hypothesis, the graph $G$ contains an induced copy of $T'$.

If the vertex $u'$ plays the role of $u$ in this copy, if suffices to find a vertex $v'$ in the neighbourhood of $u'$ which is not adjacent to any of the other vertices in the induced copy of $T'$. 

Since $G$ is $C_4$-free, $u'$ shares at most one neighbour with any other vertex of this induced copy of $T'$. Therefore, since the neighbourhood of $u'$ has at least $2t$ elements, it contains a vertex $v'$ different from and non-adjacent to all other vertices of the induced copy of $T'$. Adding this vertex to the copy of $T'$ gives an induced copy of $T$, thus completing the proof.
\end{proof}

\begin{proof}[Proof of Theorem~\ref{thm:trees}.]
Let $G$ be a graph with $n$ vertices and at least $s^{Ct}n$ edges, where $C=150$. We show that if $G$ is $K_{s, s}$-free, then there exists an induced copy of the tree $T$ on $t$ edges in $G$. Let us assume, for the sake of contradiction, that $G$ does not contain an induced copy of $T$ and further assume that $t\geq 3$, since otherwise the statement is trivial. 

Apply Proposition~\ref{prop:GHblackbox} to the graph $G$ with parameters $H:=T$, $k:=4t+1$ and $\eps:=\frac{1}{4t}$. We conclude that, for all sufficiently large $s$, either $G$ contains an induced $C_4$-free subgraph $G'\subseteq G$ with average degree greater than $4t$, or an induced subgraph $G'\subseteq G$ on $n'\geq s^{Ct/10}$ vertices and average degree $(n')^{1-\frac{1}{4t}}$.

In the first case, we can apply Proposition~\ref{prop:trees base bound} to conclude that $G'$ contains an induced copy of $T$, and hence $G$ also contains an induced copy of $T$. In the second case, we apply (ii) of Theorem~\ref{thm:bounded deg variant} to the host graph $G'$ with $n'\geq s^{Ct/10}>(t^2s)^{8t+20}$ vertices and $e(G')\ge (n')^{2-\frac{1}{4t}}$ edges. Since $G$ is $K_{s, s}$-free, so is $G'$, and we conclude that $G'$ contains an induced copy of $T$. This completes the proof.
\end{proof}

In what follows, we present further applications of Theorem~\ref{thm:bounded deg variant}.

\begin{proof}[Proof of Theorem \ref{thm:induced C_4-free}]
For each $k\geq 1$, we construct a $C_4$-free bipartite graph $H_k$ of average degree $k$ and at most $8k^2$ vertices. To do this, we pick the smallest prime power $q$ satisfying $q\geq k-1$ and setting $H_k$ to be the point-line incidence graph of projective plane with $q^2+q+1$ elements. Then, $H_k$ is a $(q+1)$-regular graph and so $H$ is a $C_4$-free graph with average degree at least $k$. Moreover, by Bertrand's postulate, we have $q\leq 2(k-1)$ and thus $H_k$ has at most $2(q^2+q+1)\leq 8k^2$ vertices. 

Let us now suppose, for the sake of contradiction, that for some sufficiently large $s$, there exists a $K_{s, s}$-free graph $G$ with average degree $d(G)\ge s^{10^3k^2}$ and no $C_4$-free induced subgraph of average degree at least $k$. Since $G$ does not contain an induced copy of $H_k$ and $s$ is sufficiently large compared to $k$, we may apply Proposition~\ref{prop:GHblackbox} to it with parameters $k$ and $\eps=\frac{1}{8k}$.

By our assumption on $G$, the first conclusion of the Proposition~\ref{prop:GHblackbox} cannot hold and therefore $G$ contains an induced subgraph $G'\subseteq G$ on $n'\geq d(G)^{1/10}\geq s^{100 k^2}>(C_{H_k} s)^{8|V(H_k)|+20}$ vertices and with $e(G')\geq (n')^{2-\frac{1}{8k}}$ edges. Since $H_k$ is a $(q+1)$-regular graph, its degrees are bounded by $2k$ and therefore part (ii) of Theorem~\ref{thm:bounded deg variant} shows that $G'$ contains an induced copy of $H_k$. This presents a contradiction to the assumption that $G$ has no $C_4$-free induced subgraph of average degree at least $k$, thus completing the proof.
\end{proof}

\begin{proof}[Proof of Theorem~\ref{thm:induced subdivisions turan}.]

Assume that the statement is not true and there exists a $K_{s, s}$-free graph $G$ which does not contain an induced subdivision of $H$ and has average degree at least $s^{150|V(H)|}$. Also, assume $|V(H)|\geq 3$, since the statement is otherwise trivial. A result of K\"uhn and Osthus \cite{KO04} states that every $C_4$-free graph $G'$ of large enough average degree $d(G')\geq d_0$ must contain an induced subdivision of $H$, where $d_0$ may depend on $H$. Therefore, we conclude $G$ does not contain an induced $C_4$-free subgraph $G'$ of average degree at least $d_0$.

Furthermore, since $G$ does not contain an induced subdivision of $H$, it does not contain an induced copy of $H$. Therefore, Proposition~\ref{prop:GHblackbox} applies to $G$, with the parameters $k=d_0$ and $\eps=\frac{1}{4|V(H)|}$. By the above discussion, the first conclusion of Proposition~\ref{prop:GHblackbox} cannot hold and hence $G$ contains an induced subgraph $G'$ on $n'\geq d(G)^{1/10}\geq s^{15|V(H)|}\geq (C_H s)^{8|V(H)|+20}$ vertices and with $e(G')\geq (n')^{2-1/4|V(H)|}$ edges. But Theorem~\ref{thm:bounded deg variant} now implies that $G$ contains either $K_{s, s}$ or an induced copy of $H$, contradiction.
\end{proof}

\begin{proof}[Proof of Theorem \ref{thm:half Erdos Hajnal}] Set $s=\frac{1}{h^2}n^{\frac{1}{8h+20}}$. If $G$ does not contain $K_{s, s}$ as a subgraph and $G$ does not contain an induced copy of $H$, we apply Theorem~\ref{thm:bounded deg variant} to deduce that $G$ has few edges. By our choice of $s$, we have $n\geq (C_H s)^{8h+20}$ and therefore part (ii) of Theorem~\ref{thm:bounded deg variant} applies to show that $G$ has at most $n^{2-\frac{1}{4h}}$ edges. Hence, $G$ must contain an independent set of size at least $n^{\frac{1}{4h}}$, which completes the proof.
\end{proof}

\section{Cycles and the cube}\label{sec:cycles}

The goal of this section is to prove Theorem~\ref{thm:cycles}, which is concerned with graphs that are induced $C_{2k}$-free and $K_{s,s}$-free. Also, we show Proposition~\ref{prop:cube} which gives an upper bound on the induced Tur\'an number of the cube graph. We begin by proving the following strengthening of Theorem~\ref{thm:cycles}, which is needed to for our proof of Proposition~\ref{prop:cube}.

\begin{theorem}\label{thm:alternating cycles}
Let $k, s\geq 2$ be integers and let $G$ be a graph on $n$ vertices which does not contain $K_{s, s}$ as a subgraph. Then there exists a constant $C=C_{k, s}>0$ such that for any partition $V(G)=A\cup B$ with at least $e(A, B)\geq Cn^{1+\frac{1}{k}}$ crossing edges, there exists an induced copy of the cycle $C_{2k}$ on vertices $v_1, v_2, \dots, v_{2k}$ such that $v_1, v_3, \dots, v_{2k-1}\in A$ and $v_2, v_4, \dots, v_{2k}\in B$.
\end{theorem}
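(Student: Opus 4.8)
\textbf{Proof plan for Theorem~\ref{thm:alternating cycles}.}

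The plan is to adapt the classical Bondy--Simonovits argument for $\ex(n,C_{2k})=O(n^{1+1/k})$ into an \emph{induced} and \emph{bipartite-crossing} version, using the $K_{s,s}$-freeness to rule out the short chords that would otherwise destroy inducedness. First I would clean the pair $(A,B)$: by deleting low-degree vertices and passing to a subgraph, I may assume every vertex has crossing-degree at least $d:=cn^{1/k}$ into the other side (for a suitable constant $c=c_{k,s}$), while still retaining $\Omega(n^{1+1/k})$ crossing edges; this costs only a constant factor and changes $n$ by a constant factor as well. Throughout, $K_{s,s}$-freeness gives the key local control via Claim~\ref{claim:simple}: any two vertices have at most $s-1$ common neighbours once we are above the threshold, so ``almost-all'' statements about neighbourhoods of bounded sets of vertices are available cheaply.

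Next I would run a breadth-first search (BFS) from a well-chosen vertex $v_1\in A$, building layers $L_0=\{v_1\}, L_1, L_2,\dots$, and track how the layer sizes grow. The standard Bondy--Simonovits ``no $C_{2k}$'' dichotomy says that either the layers expand by a factor roughly $d/(2k)$ at each step up to level $k$ — forcing $|L_k|$ to exceed $n$, a contradiction — or some layer $L_i$ with $i\le k$ contains a vertex with many ($\ge k$, say) neighbours in $L_i$ or in $L_{i-1}$, which one uses to close up a short even cycle. To get a cycle of \emph{exactly} length $2k$ that alternates between $A$ and $B$, I would start the BFS tree so that its levels alternate sides (possible since all our edges are crossing), and arrange the standard closing-up argument to produce two internally disjoint paths of the same length $i\le k$ between two vertices, giving an even closed walk, and then extract from it a $2k$-cycle alternating between $A$ and $B$; if the closing-up gives a shorter cycle $C_{2j}$ with $j<k$, I would ``pump'' it up to length $2k$ by re-routing through fresh vertices, which is where the high minimum degree $d\gg k$ is used repeatedly.

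The main obstacle — and the step that deserves the most care — is ensuring the cycle we find is \emph{induced}. When we build a cycle $v_1v_2\cdots v_{2k}$ from BFS paths and pumping, chords may appear; the two types are chords within one side (which cannot occur, since $A$ and $B$ contain only crossing edges between them — wait, $A$ and $B$ are arbitrary vertex classes, so $A$ may have internal edges, so this must genuinely be dealt with) and chords between the two sides. To handle these I would, at each step of the BFS/pumping where I select a new vertex $w$ to extend a partial alternating path $P$, insist that $w$ be adjacent in $G$ to \emph{only} its intended predecessor among the vertices already chosen; since $|P|\le 2k$ and each already-chosen vertex has at most $s-1$ common neighbours with the current endpoint of $P$ (by Claim~\ref{claim:simple}, valid because the relevant neighbourhoods have size $\ge d\ge 2s$), at most $(2k)(s-1)$ vertices of the current neighbourhood are ``forbidden'', so as long as $d>2ks$ such a $w$ exists. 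Choosing $c=c_{k,s}$ large enough that $d=cn^{1/k}$ exceeds all the thresholds ($2s$, $2ks$, and the Bondy--Simonovits expansion threshold) simultaneously, and noting $n$ may be assumed large (for small $n$ the statement is vacuous after adjusting $C$), completes the argument. Formally, I expect the cleanest writeup to prove by induction on the partial path length $\ell=1,\dots,2k$ the statement: there is an induced path $v_1v_2\cdots v_\ell$ alternating between $A$ and $B$ such that the current endpoint $v_\ell$ has $\ge d - O(k s)$ neighbours on the correct side that are non-adjacent to all of $v_1,\dots,v_{\ell-1}$ — with the final closing-up (finding $v_{2k}$ adjacent to both $v_{2k-1}$ and $v_1$ and to nothing else) done by one more application of Claim~\ref{claim:simple}, after having used a fresh-vertex expansion argument to guarantee $v_1$ and $v_{2k-1}$ still have a large common pool of legal neighbours.
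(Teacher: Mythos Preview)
Your proposal has two genuine gaps, and both occur exactly where the difficulty of the problem lies.

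\textbf{First, the induced-ness step is based on a false claim.} You write that ``each already-chosen vertex has at most $s-1$ common neighbours with the current endpoint of $P$ (by Claim~\ref{claim:simple})''. This is not what $K_{s,s}$-freeness gives, nor what Claim~\ref{claim:simple} says. In a $K_{s,s}$-free graph with $s\ge 3$, two vertices may have arbitrarily many common neighbours (codegrees of order $n^{1-2/s}$ are typical in extremal constructions). Claim~\ref{claim:simple} only says that at most $s$ vertices are adjacent to \emph{almost all} of a given set $W$; it gives no bound on $|N(v_i)\cap N(v_\ell)|$ for a single $v_i$. Consequently, when you try to extend the induced path by picking $w\in N(v_\ell)$ non-adjacent to $v_1,\dots,v_{\ell-1}$, the set of forbidden $w$'s could easily be all of $N(v_\ell)$, and your ``$(2k)(s-1)$ forbidden vertices'' count is unjustified. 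This is precisely why the paper does \emph{not} build the cycle greedily.

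\textbf{Second, the closing-up step is unjustified.} Even granting an induced alternating path $v_1\cdots v_{2k-1}$, you need $v_{2k}$ adjacent to both $v_1$ and $v_{2k-1}$. With minimum crossing-degree $d=\Theta(n^{1/k})$ and $k\ge 3$, two generic vertices have no common neighbours at all ($d^2\ll n$), so the phrase ``a fresh-vertex expansion argument to guarantee $v_1$ and $v_{2k-1}$ still have a large common pool of legal neighbours'' hides the entire content of the theorem. The ``pump a shorter $C_{2j}$ up to $C_{2k}$'' idea is likewise not a routine step: replacing edges by longer paths destroys alternation, and even in the non-induced Bondy--Simonovits proof, hitting length exactly $2k$ requires the delicate layer-structure argument, not a local rerouting.

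For comparison, the paper abandons the constructive BFS approach entirely. It passes to an almost-regular subgraph, counts homomorphic $2k$-cycles, uses Janzer's lemma to show almost all of them are non-degenerate, and then argues by contradiction: if none of the $\Theta(\hom(C_{2k},H_1))$ genuine alternating $2k$-cycles is induced, then a positive fraction of pairs of $k$-paths with common endpoints carry a chord between their interiors. Averaging picks out a pair $u,v$ with many internally disjoint $k$-paths and quadratically many chord-edges among their interiors, and K\H{o}v\'ari--S\'os--Tur\'an on that vertex set produces a $K_{s,s}$. The key point is that the chord problem is handled \emph{globally and statistically}, not vertex-by-vertex; your local approach does not have enough control for that.
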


Since any graph $G$ contains an partition with at least $e(G)/2$ crossing edges, Theorem~\ref{thm:cycles} is directly implied by Theorem~\ref{thm:alternating cycles}.

Before presenting the main idea of the proof, we introduce some terminology and notation which will be used throughout the section. For every graph $G$ with partition $V(G)=A\cup B$, we define the subgraph $G_1$ containing the crossing edges. Furthermore, we say that a cycle of length $2k$ in $G$ is \textit{alternating} if all edges of this cycle cross the partition $A, B$. Similarly, a path is \textit{alternating} if all of its edges cross the partition.

Now, we present the outline of our proof. The first step is to pass to subsets $A_0\subseteq A, B_0\subseteq B$ such that $G_1[A_0\cup B_0]$ is an almost regular graph. To do this, we use the notion of $\alpha$-maximality, introduced in \cite{Tom23}. Then, we use a result of Janzer \cite{J23} to show that most homomorphic cycles of length $2k$ are non-degenerate. Finally, we argue that if there are no induced cycles of length $2k$ in $G$, then one can choose vertices $u, v\in V(G)$ with many disjoint paths between them and many edges between internal vertices of these paths. This will allow us to find a dense subgraph of $G$, in which we find a copy of $K_{s, s}$, using the K\H{o}v\'ari-S\'os-Tur\'an theorem.

As mentioned above, the first step is to show how to pass from a graph to an almost regular induced subgraph. Given a graph $G$, $\Delta(G)$ denotes the maximum degree, $\delta(G)$ the minimum degree, and $d(G)$ the average degree of $G.$ We say that $G$ is $K$-almost-regular if $\Delta(G)\leq K\delta(G)$.

\begin{lemma}\label{lemma:passing to an almost regular subgraph}
Let $\alpha>0$ be a fixed real number and let $K=2^{3/\alpha+5}$. Furthermore, let $G$ be a graph on $n$ vertices with at least $Cn^{1+\alpha}$ edges. Then, there exists a $K$-almost-regular induced subgraph $H\subseteq G$ on $m$ vertices with at least $\frac{C}{4} m^{1+\alpha}$ edges.
\end{lemma}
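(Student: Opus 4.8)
The plan is to use the notion of $\alpha$-maximality from \cite{Tom23}. Among all nonempty induced subgraphs of $G$, pick one, call it $H$ with $m=|V(H)|$ vertices, maximizing the ratio $\rho(F):=e(F)/|V(F)|^{1+\alpha}$ over nonempty induced $F\subseteq G$; call such an $H$ \emph{$\alpha$-maximal}. Since $G$ is itself a candidate, $\rho(H)\ge \rho(G)\ge C$, so $e(H)\ge C m^{1+\alpha}$, which already beats the required bound $\tfrac{C}{4}m^{1+\alpha}$. Thus the entire content of the lemma is the claim that every $\alpha$-maximal induced subgraph is automatically $K$-almost-regular.

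The minimum-degree bound is the easy half. If $v$ is a vertex of $H$ of minimum degree $\delta:=\delta(H)$, then $H-v$ is an induced subgraph of $G$, so $\rho(H-v)\le \rho(H)$, i.e.\ $\frac{e(H)-\delta}{(m-1)^{1+\alpha}}\le \frac{e(H)}{m^{1+\alpha}}$; using $(1-\tfrac1m)^{1+\alpha}\le 1-\tfrac1m$ this rearranges to $\delta\ge e(H)/m=\tfrac12 d(H)$, where $d(H)$ denotes the average degree. In particular $\delta\ge C m^{\alpha}$, and the same idea applied to deleting the $t$ lowest-degree vertices at once shows the degree sequence is not bottom-heavy.

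Bounding the maximum degree $\Delta:=\Delta(H)$ is the crux, and this is where the constant $K=2^{3/\alpha+4}$ enters. Suppose for contradiction that $\Delta>K\delta$. Writing $U_j=\{u\in V(H):\deg_H(u)\ge 2^j\delta\}$, summing degrees gives $|U_j|\le 2e(H)/(2^j\delta)\le 2m/2^j$, so the high-degree vertices are few, the sets $U_j$ decay geometrically, and they stay nonempty all the way up to level $J=\lfloor \log_2(\Delta/\delta)\rfloor>3/\alpha$. The idea is to contradict $\alpha$-maximality by exhibiting a denser induced subgraph near the highest-degree vertices: one takes $U_j$ together with a suitably chosen set of about $|U_j|/\alpha$ of its neighbours lying outside $U_j$; since each vertex of $U_j$ has degree $\approx 2^j\delta$, this subgraph carries enough edges that, once $j$ is a large enough multiple of $1/\alpha$, its ratio $\rho$ exceeds $\rho(H)$. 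The delicate point, which I expect to be the main obstacle, is that $U_j$ may be tiny (possibly a single vertex) while its neighbourhood is spread out, so a one-step "zoom-in" need not be dense enough; to handle this one iterates the zoom-in, passing to a small ball around the high-degree vertices and repeating, and the number of iterations needed is $O(1/\alpha)$ — which is exactly what produces the $2^{\Theta(1/\alpha)}$ in $K$ and where the bulk of the bookkeeping lives.

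As an alternative to working with $\alpha$-maximal subgraphs, one can take $H$ to be an induced subgraph with $e(H)\ge \tfrac{C}{4}|V(H)|^{1+\alpha}$ and $|V(H)|$ minimum: minimality already forces $\delta(H)\gtrsim d(H)$, and a random vertex deletion shows $e(H)<\tfrac{C}{2}|V(H)|^{1+\alpha}$; then exactly the same dyadic zoom-in contradicts minimality whenever $\Delta(H)>K\delta(H)$. Either way, the heart of the matter is converting "a vertex of unexpectedly high degree" into "a small, unexpectedly dense subgraph".
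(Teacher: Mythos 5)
The minimum-degree half of your argument is fine: deleting a single vertex and comparing ratios does give $\delta(H)\ge e(H)/m$ for any $\alpha$-maximal $H$. But the max-degree half is where things go wrong, and not just in the bookkeeping — the claim you are trying to prove there, that every $\alpha$-maximal induced subgraph is automatically $K$-almost-regular, is false, and the paper's proof is structured specifically to avoid relying on it. Consider a $d$-regular, locally sparse pseudorandom graph $G_1$ (e.g.\ one of large girth) on $n$ vertices with $d\approx n^{\alpha}$ tuned so that $G_1$ is $\alpha$-maximal, and add one apex vertex $v$ joined to all of $V(G_1)$. Because $v$ contributes only $n$ of the roughly $n^{1+\alpha}$ edges, $\rho$ barely changes and $G=G_1+v$ remains $\alpha$-maximal, yet $\Delta(G)/\delta(G)\approx n^{1-\alpha}$ is unbounded. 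Your ``zoom-in'' heuristic cannot rescue this: the neighbourhood of $v$ is just $V(G_1)$, which is no denser than $G$ itself, and iterating the zoom-in only returns you to $G_1$. The correct conclusion to draw from a lone high-degree vertex is not ``there is a nearby dense subgraph'' but rather ``this vertex carries a negligible fraction of the edges and should be deleted.''

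That deletion step is exactly what the paper does. After passing to an $\alpha$-maximal subgraph, it lets $U$ be the set of vertices of degree at least $K_0 d(G)$ with $K_0=2^{3/\alpha+2}$, so $|U|\le n/K_0$, and then uses $\alpha$-maximality \emph{once} in the following way: if more than half the edges were incident to $U$, one could throw in a random set $V$ of $n/K_0$ further vertices and obtain an induced subgraph on at most $2n/K_0$ vertices carrying at least $e(G)/2K_0$ edges, which would beat the $\alpha$-maximal ratio. Hence at most half the edges touch $U$; deleting $U$ caps the maximum degree at $K_0 d(G)$ while keeping at least half the edges, and a final standard pass removing vertices of degree below half the new average boosts the minimum degree to at least $d(G)/4$. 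The result is $K$-almost-regular with $K=4K_0=2^{3/\alpha+4}$, but it is no longer $\alpha$-maximal, and it does not need to be. So the two proofs genuinely diverge: you try to extract regularity directly from maximality of $\rho$, which overreaches; the paper uses maximality only to control how many edges touch the high-degree vertices and then regularises by deletion. To repair your write-up you would need to replace the entire max-degree paragraph with this deletion argument.
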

\begin{proof}
The main idea of the proof is to pass to a so called $\alpha$-maximal subgraph of $G$ and clean it to remove the low-degree and the high-degree vertices. We say that a graph $G$ is \emph{$\alpha$-maximal} if for any subgraph $H\subseteq G$ one has \[\frac{e(G)}{v(G)^{1+\alpha}}\geq \frac{e(H)}{v(H)^{1+\alpha}}.\]
If $G$ is not $\alpha$-maximal, we may replace $G$ by the subgraph $G'\subseteq G$ maximizing the ratio $\frac{e(G')}{v(G')^{1+\alpha}}$. Note that $G'$ is induced, $\alpha$-maximal, and satisfies $e(G')\geq Cv(G')^{1+\alpha}$. Hence, in what follows, we assume that $G$ is $\alpha$-maximal and set $C_0:=e(G)/v(G)^{1+\alpha}\geq C$. 

Let $U$ be the set of vertices $v\in G$ whose degree is at least $K_0 d(G)$, where $K_0=2^{3/\alpha+2}$. Then, $U$ has size at most $|U|\leq \frac{n}{K_0}$. Using the assumption that $G$ is $\alpha$-maximal, we show that at most a half of the edges in $G$ are incident to $U$.

Suppose this is not the case and we have at least $e(G)/2$ edges incident to $U$. Our goal is to find a set $V\subseteq V(G)\backslash U$ for which $e(G[U\cup V])>C_0 |U\cup V|^{1+\alpha}$, which contradicts the $\alpha$-maximality of $G$. Let $V$ be a random subset of $V(G)\backslash U$ of cardinality $\frac{n}{K_0}$. Then we have $|U\cup V|\leq |U|+|V|\leq \frac{2n}{K_0}$. Furthermore, every edge incident to $U$ belongs to $G[U\cup V]$ with probability at least $\frac{1}{K_0}$ and therefore the expected number of edges induced on $U\cup V$ is at least $\bE[e(G[U\cup V])]\geq e(G)/2K_0$. Therefore, there exists a set $V$ of size $\frac{n}{K_0}$ for which $e(G[U\cup V])\geq e(G)/2K_0$.

We claim that the subgraph $G[U\cup V]$ contradicts the $\alpha$-maximality of $G$, which is verified by the following simple computation:
\begin{align*}
&e(G[U\cup V])-C_0|U\cup V|^{1+\alpha}\geq \frac{e(G)}{2K_0} - C_0\left(\frac{2n}{K_0}\right)^{1+\alpha}\geq \frac{C_0n^{1+\alpha}}{2K_0} - C_0\left(\frac{2n}{K_0}\right)^{1+\alpha}\geq 0,
\end{align*}
where the last inequality follows from the choice of $K_0$.

Thus, we conclude that $U$ is incident to at most $e(G)/2$ edges and so $V(G)\backslash U$ induces at least $e(G)/2$ edges. Let us now focus on the subgraph $G'\subseteq G$ induced on the set $V(G)\backslash U$. We may repeatedly remove from this graph all vertices of degree at most $d(G')/4$, while removing at most half of the edges of $G'$. In this way, we obtain an induced subgraph $G''\subseteq G$ with the property that $e(G'')\geq e(G')/2\geq e(G)/4$ and $\delta(G'')\geq d(G')/4\geq d(G)/8$. Since all vertices of $G'$ have degree at most $K_0 d(G)$, we finally arrive at the conclusion $8K_0\delta(G'')\geq \Delta(G'')$, thus completing the proof.
\end{proof}

The next step of the proof is to show that almost all homomorphic $2k$-cycles are non-degenerate. Here, a homomorphic $2k$-cycle denotes a sequence of $2k$ vertices, $v_1, \dots, v_{2k}$ such that $v_{i}$ is adjacent to $v_{i+1}$ for all $i=1, \dots, 2k-1$ and $v_1$ is adjacent to $v_{2k}$. Such a cycle is \textit{non-degenerate} if all $2k$ vertices are distinct and otherwise it is \textit{degenerate}. The number of homomorphic $2k$-cycles in $G$ is denoted by $\hom(C_{2k}, G)$. For brevity, throughout this section, we will often refer to homomorphic $2k$-cycles simply as $2k$-cycles.

\begin{lemma}\label{lemma:almost all cycles are nondegenerate}
Let $k$ be a positive integer, let $K=2^{3k+5}$ and let $G$ be a $K$-almost-regular graph with $d(G)= Cn^{1/k}$. Then at most $\frac{2^{2k+10}}{\sqrt{C}}\hom(C_{2k}, G)$ homomorphic $2k$-cycles in $G$ are degenerate.
\end{lemma}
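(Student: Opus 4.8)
The plan is to bound the number of degenerate homomorphic $2k$-cycles directly, by summing over all the possible "collision patterns" that make a homomorphic cycle degenerate, and comparing each such count to $\hom(C_{2k},G)$, which in an almost-regular graph with average degree $d = Cn^{1/k}$ is of order $\Theta(n d^{2k}/n^{k}) = \Theta(C^{2k} n^k)$ (up to constants depending on $k$). More precisely, since $G$ is $K$-almost-regular, every vertex has degree between $d/K$ and $Kd$; I will use the trivial lower bound $\hom(C_{2k},G) \ge$ (number of closed walks of length $2k$) $\ge \sum_v \deg(v)^{2k}/n^{k-1}$-type estimate, or more simply the bound $\hom(C_{2k},G) \ge n (d/K)^{2k} \cdot n^{-(k-1)}$ coming from the fact that in a $d$-regular-ish graph a random homomorphic $2k$-cycle closes up with probability $\gtrsim n^{-(k-1)}$; either way one gets $\hom(C_{2k},G) \ge c_k C^{2k} n^k$ after accounting for the factor $K^{-2k}$. (Here I would need to be slightly careful and may instead just use $\hom(C_{2k}, G) \geq e(G) \cdot (\text{something})$; the clean route is: the number of homomorphic $2k$-cycles is at least the number of homomorphic paths of length $2k$ that happen to be closed, and in a $K$-almost-regular graph this is at least $(d/K)^{2k} n / (Kd)^{k-1} \cdot (\text{const})$, which is $\gg C^{k} n^k$, and more carefully $\gg C^{2k} n^k / K^{\cdot}$.)

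The main work is the upper bound on degenerate cycles. A homomorphic $2k$-cycle $v_1,\dots,v_{2k}$ is degenerate iff the map $i \mapsto v_i$ is not injective, i.e. there exist $i<j$ with $v_i = v_j$. I would enumerate degenerate cycles by first choosing which coordinate is equal to which; this partitions the $2k$ cycle-positions into a nontrivial set partition $\mathcal{P}$ with $|\mathcal{P}| = \ell \le 2k-1$ distinct values. Given such a pattern, the homomorphic $2k$-cycle becomes a homomorphism from $C_{2k}$ into $G$ that factors through the quotient multigraph $C_{2k}/\mathcal{P}$ on $\ell$ vertices; the number of such homomorphisms is at most $\hom(F, G)$ for the underlying simple graph $F$ on $\ell \le 2k-1$ vertices obtained from $C_{2k}/\mathcal{P}$ (which is connected and has at least $\ell$ edges, since the quotient of a cycle has at least as many edges as vertices, so $F$ has a cycle — unless the quotient collapses edges, in which case $F$ has $\ge \ell - 1$ edges and is connected, and we must handle it). For such a connected $F$ on $\ell$ vertices with $e(F) \ge \ell-1$ edges, the standard bound in a $K$-almost-regular graph gives $\hom(F,G) \le n \cdot (Kd)^{e(F)} / n^{?}$... the clean statement I would invoke is: in a graph with max degree $\le Kd$, $\hom(F,G) \le n^{c(F)} (Kd)^{e(F)}$ where $c(F)$ is the number of connected components, so here $\hom(F,G) \le n (Kd)^{e(F)} \le n (Kd)^{2k-1}$ in the worst case $\ell = 2k-1$. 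Comparing: the ratio of degenerate cycles of a fixed pattern to $\hom(C_{2k},G) \ge c_k C^{2k} n^k K^{-2k}$ is at most
\[
\frac{(2k)! \cdot n (Kd)^{2k-1}}{c_k C^{2k} n^k K^{-2k}} = \frac{(2k)!\, K^{4k-1}}{c_k} \cdot \frac{n d^{2k-1}}{C^{2k} n^k} = \frac{(2k)!\, K^{4k-1}}{c_k}\cdot \frac{C^{2k-1} n^{(2k-1)/k}}{C^{2k} n^{k-1}},
\]
and since $(2k-1)/k = 2 - 1/k$ and we need this to be $\le n^{k-1}/\sqrt{C}$ up to constants, one checks $n^{2-1/k}/n^{k-1} = n^{3-k-1/k} \le 1$ for $k \ge 2$ (indeed $3 - k - 1/k \le 0 \iff k^2 - 3k + 1 \ge 0$, true for $k \ge 3$, and $k=2$ gives exponent $3 - 2 - 1/2 = 1/2 > 0$, so $k=2$ needs separate attention — for $k=2$ the only degenerate pattern on $\le 3$ vertices is a collision giving $F = $ path or triangle on $3$ vertices, $e(F) \le 3$, and $\hom \le n(Kd)^3$, compared with $\hom(C_4,G) \gtrsim n d^4 / n K^{-4} = d^4 K^{-4}$, ratio $\lesssim n K^7 d^3 / d^4 = n K^7 / d = K^7 / C$ since $d = C n^{1/2}$... wait $n/d = n^{1/2}/C$, ratio $\lesssim n^{1/2} K^7 / C = \sqrt{n} K^7/C$ — this doesn't obviously beat $1/\sqrt C$, so for $k=2$ I'd instead use the sharper regularity-aware count where the quotient of $C_4$ actually has $\ge 3$ edges forcing a better power of $d$, or use $\hom(C_4,G)\ge d^2 e(G) \ge d^2 \cdot nd/2$; then ratio $\lesssim nd^3/(nd^3) = O(1)$, still not enough — so the genuinely correct accounting must track that the degenerate quotient has \emph{fewer edges relative to $\hom(C_{2k})$'s normalization}, and the right comparison is degenerate $\lesssim n^{\ell - ?}$... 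I would sum $\sum_{\text{patterns}} \hom(F,G)$ carefully using that for each collapse the exponent of $n$ drops but the exponent of $d$ drops by at least as much, netting a saving of a factor $\gtrsim d/n^{1/k} = C$, or $\gtrsim \sqrt C$ after the weakest collapse; the $\sqrt C$ (rather than $C$) appears exactly because the "mildest" degeneracy — two vertices at distance $2$ coinciding, turning $C_{2k}$ into $C_{2k-2}$ plus a pendant repeat — saves only a factor proportional to $\sqrt{C}$ under the $K$-almost-regular normalization.)

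The hardest part, and the one I'd expect to spend the most care on, is making the edge-count bookkeeping in the previous paragraph precise and uniform over all $\le B_{2k}$ (Bell number) collision patterns, because the naive bound $\hom(F,G) \le n(Kd)^{e(F)}$ loses too much when $F$ has few edges and one really needs the "connected graph on $\ell$ vertices with $\ge \ell-1$ edges, embedded in a $K$-almost-regular host" bound in the sharp form $\hom(F,G) \le (Kd)^{\ell-1}\cdot(\text{number of components})\cdot n$ combined with the fact that every proper quotient of $C_{2k}$ that yields a \emph{simple} graph still has a cycle through the image (so $e(F) \ge \ell$) except for degenerate collapses of adjacent vertices, which I'd rule out or handle by hand (adjacent $v_i = v_{i+1}$ forces a loop, impossible in a simple graph, so those are automatically $0$). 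Once the bound $\sum_{\text{degenerate}} 1 \le (\text{const}_k) \cdot K^{O(k)} n \cdot d^{2k-1}$ is established and divided by $\hom(C_{2k},G) \ge \text{const}_k K^{-2k} n d^{2k} / n^{k-1} = \text{const}_k K^{-2k} d^{2k} n^{2-k}$... using $d = Cn^{1/k}$ so $d^{2k} n^{2-k} = C^{2k} n^{2} = C^{2k} n^2$ and $nd^{2k-1} = C^{2k-1} n \cdot n^{(2k-1)/k} = C^{2k-1} n^{2+1 - 1/k}$, hmm the powers of $n$ force me to re-examine — the correct normalization is $\hom(C_{2k},G) \asymp$ (number of closed $2k$-walks) and for a $d$-regular graph on $n$ vertices that is $\asymp \lambda_i^{2k}$ summed, at least $\ge (nd)^{2k}/n^{2k-1}\cdot(\text{err})$; I'll settle on $\hom(C_{2k},G) \ge e(G)^{?}$... — concretely I will use $\hom(C_{2k},G) \ge \hom(P_{2k-1},G)^2 / \hom(P_{2k-2},G) \ge \dots$ via Cauchy–Schwarz, giving $\hom(C_{2k},G) \ge n (d/K)^{2k}/(\text{poly})$, and then the target inequality $\#\text{degenerate} \le 2^{2k+10} C^{-1/2}\hom(C_{2k},G)$ comes out to $K^{O(k)}/\sqrt{C} \le 2^{2k+10}/\sqrt C$, which holds once one notes $K = 2^{3k+4}$ was chosen precisely so that $K^{O(k)} \le 2^{2k+10}$ fails — so in fact the intended proof must have the exponent of $K$ be $O(k)$ with a small constant, namely the losses are $K^{2k}$ from below and $K^{2k-1}$ from... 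I will therefore organize the proof to make all $K$-dependent losses collapse into a single factor bounded by $2^{2k+10}$, which the choice $K = 2^{3k+4}$ makes work, and this final constant-chasing is the bookkeeping I expect to be the real (if routine) obstacle.
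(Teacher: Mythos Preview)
Your proposal has two genuine gaps, and the paper's proof takes a completely different (and much shorter) route.

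\textbf{The lower bound on $\hom(C_{2k},G)$.} Every formula you write down for this is wrong. For instance, ``$\Theta(nd^{2k}/n^k)$'' equals $C^{2k}n^{3-k}$, not $C^{2k}n^k$; and ``$n(d/K)^{2k}\cdot n^{-(k-1)}$'' equals $(d/K)^{2k}n^{2-k}$, which tends to $0$ for $k\ge 3$. The correct and immediate tool is Sidorenko's inequality for even cycles:
\[
\hom(C_{2k},G)\ge d(G)^{2k}=C^{2k}n^{2}.
\]
This is all the paper uses on the lower side.

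\textbf{The upper bound on degenerate cycles.} Your collision-pattern enumeration never closes. The step where you bound $\hom(F,G)$ for the quotient $F$ by $n(Kd)^{e(F)}$ and compare to $\hom(C_{2k},G)$ does not give a saving of $C^{-1/2}$ uniformly; for instance, in the case you try to analyze for $k=2$ you end up with a factor $\sqrt{n}$ you cannot absorb, and you correctly note that the constants would come out as $K^{O(k)}$, far larger than $2^{2k+10}$. The claim that the ``mildest'' collapse saves exactly $\sqrt{C}$ is not substantiated (and is not how the $\sqrt{C}$ actually arises). What you are effectively attempting is to reprove a nontrivial lemma of Janzer from scratch, and your sketch does not do so.

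\textbf{What the paper does.} The paper simply quotes Janzer's lemma: for any symmetric relation $\sim$ with ``local degree'' $t$, the number of homomorphic $2k$-cycles with $x_i\sim x_j$ for some $i\neq j$ is at most $32k^{3/2}t^{1/2}\Delta(G)^{1/2}n^{1/2k}\hom(C_{2k},G)^{1-1/2k}$. Taking $\sim$ to be equality gives $t=1$, so the number of degenerate cycles is at most $32k^{3/2}\Delta^{1/2}n^{1/2k}\hom(C_{2k},G)^{1-1/2k}$. Dividing by $\hom(C_{2k},G)$ and using Sidorenko for $\hom(C_{2k},G)^{1/2k}\ge d$ together with $\Delta\le Kd$ yields
\[
\frac{\#\text{degenerate}}{\hom(C_{2k},G)}\;\le\;\frac{32k^{3/2}(Kd)^{1/2}n^{1/2k}}{d}\;=\;\frac{32k^{3/2}K^{1/2}}{\sqrt{C}}\;\le\;\frac{2^{2k+10}}{\sqrt{C}},
\]
the last inequality being the routine check $32k^{3/2}\,2^{(3k+4)/2}\le 2^{2k+10}$. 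Note that only a single factor of $K^{1/2}$ enters, not $K^{O(k)}$; this is exactly why the stated constant is achievable, and why your bookkeeping could not be made to match it.
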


The key ingredient in the proof of this lemma is the following result of Janzer \cite{J23}, which controls the number of degenerate $2k$-cycles in a graph.

\begin{lemma}[Lemma 2.2 in \cite{J23}]\label{lemma:janzer}
Let $k \geq 2$ be an integer and let $G = (V, E)$ be a graph on $n$ vertices. Let $\sim$ be
a symmetric binary relation defined over $V$ such that for every $u \in V$ and $v \in V$, $v$ has at most $t$ neighbours $w \in V$ which satisfy $u \sim w$. Then the number of homomorphic $2k$-cycles
$(x_1, x_2, \dots, x_{2k})$ in $G$ such that $x_i \sim x_j$ for some $i\neq j$ is at most
\[32k^{3/2}t^{1/2}\Delta(G)^{1/2}n^{\frac{1}{2k}}\hom(C_{2k}, G)^{1-\frac{1}{2k}}.\]
\end{lemma}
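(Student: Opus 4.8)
This is a result of Janzer; here is the line of attack I would follow. Write $w_{\ell}(u,v)$ for the number of walks of length $\ell$ from $u$ to $v$ in $G$, and put $h=\hom(C_{2k},G)$, so that $h=\sum_{u,v}w_{k}(u,v)^{2}$. A homomorphic $2k$-cycle $(x_{1},\dots,x_{2k})$ counted by the lemma has some index $i$ with $x_{i}\sim x_{j}$ for a $j\ne i$; since the count is invariant under cyclically rotating the tuple, the number of bad cycles is at most $2k$ times the number $B$ of homomorphic $2k$-cycles in which $x_{1}\sim x_{j}$ for some $j$, and $B\le 2\sum_{d=1}^{k}M_{d}$, where $M_{d}$ counts homomorphic $2k$-cycles with $x_{1}\sim x_{1+d}$. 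So it suffices to estimate each $M_{d}$ for $1\le d\le k$ and sum.

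For fixed $d$, a homomorphic $2k$-cycle with $x_{1}\sim x_{1+d}$ is exactly a pair consisting of a walk of length $d$ from $u:=x_{1}$ to $v:=x_{1+d}$ and a walk of length $2k-d$ from $v$ back to $u$, with $u\sim v$; hence $M_{d}=\sum_{u\sim v}w_{d}(u,v)\,w_{2k-d}(u,v)$. Now I would peel off the $\sim$-constraint: expanding the shorter walk one step from the $v$-end gives $w_{d}(u,v)=\sum_{z\in N(v)}w_{d-1}(u,z)$, so $M_{d}=\sum_{u,z}w_{d-1}(u,z)\sum_{v\in N(z):\,u\sim v}w_{2k-d}(u,v)$. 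For fixed $u$ and $z$ the inner sum has at most $t$ terms by the hypothesis on $\sim$; a Cauchy–Schwarz over these $\le t$ choices of $v$ converts the constraint into a factor $t^{1/2}$, and a second Cauchy–Schwarz over the pairs $(u,z)$ bounds $M_{d}$ by $t^{1/2}$ times the geometric mean of $\sum_{u,z}w_{d-1}(u,z)^{2}=\hom(C_{2(d-1)},G)$ and $\sum_{u,z}\sum_{v\in N(z)}w_{2k-d}(u,v)^{2}\le\Delta(G)\,\hom(C_{2(2k-d)},G)$. This is where the factors $t^{1/2}$ and $\Delta(G)^{1/2}$ come from, and it reduces the problem to bounding homomorphic path/cycle counts in terms of $n$, $\Delta(G)$ and $h$.

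For the latter I would use that the sequence $\ell\mapsto\hom(C_{2\ell},G)=\sum_{i}\lambda_{i}^{2\ell}$ (eigenvalue power sums of the adjacency matrix) is log-convex — immediate from Cauchy–Schwarz on $\sum_{i}(\lambda_{i}^{2})^{\ell}$ — together with $\lambda_{1}\le\Delta(G)$ and $\hom(C_{0},G)=n$. A Hölder / power-mean interpolation between $\ell=0$ (value $n$) and $\ell=k$ (value $h$), with a power of $\lambda_{1}$ used to reconcile the mismatch in walk lengths, bounds $\hom(C_{2(d-1)},G)$, $\hom(C_{2(2k-d)},G)$ and any homomorphic path count $\hom(P_{p},G)=\mathbf{1}^{\top}A^{p}\mathbf{1}$ by the appropriate monomial in $n,\Delta(G),h$. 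Substituting back and summing over $d$ and over the $2k$ rotations should produce $32k^{3/2}t^{1/2}\Delta(G)^{1/2}n^{1/(2k)}\hom(C_{2k},G)^{1-1/(2k)}$.

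The main obstacle is exactly this last bookkeeping, and it is genuinely delicate. The naive split of the cycle into a length-$d$ and a length-$(2k-d)$ walk makes the second walk nearly as long as the whole cycle when $d$ is small, and the only way to control $\hom(C_{2(2k-d)},G)$ for such $d$ is through a large power of $\lambda_{1}\le\Delta(G)$, which ruins the bound; similarly, summing the naive per-$d$ estimates over $d$ loses a spurious factor of order $\sqrt{k}$. To get the exponents to land on exactly $\tfrac12$, $\tfrac1{2k}$ and $1-\tfrac1{2k}$ one must choose a more balanced decomposition — e.g. organising the Cauchy–Schwarz so that the surviving walk counts stay of length about $k$, and treating the short-$d$ and the long-$d$ regimes with the interpolation weights tuned to each — and it is precisely this optimisation, rather than the basic idea of trading the $\sim$-constraint for $t^{1/2}$, that constitutes the content of the lemma.
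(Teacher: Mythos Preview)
The paper does not prove this lemma at all: it is quoted as Lemma~2.2 of Janzer~\cite{J23} and used as a black box in the proof of Lemma~\ref{lemma:almost all cycles are nondegenerate}. There is therefore nothing in the present paper to compare your attempt against.

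As for the sketch itself, the high-level strategy you describe---rotate so the related pair involves $x_{1}$, split the cycle at the two related vertices into walks, peel one edge to expose the $t$-constraint, apply Cauchy--Schwarz, and then control the resulting walk/cycle counts via log-convexity of $\ell\mapsto\hom(C_{2\ell},G)$---is indeed the skeleton of Janzer's argument. But you have not actually carried it out: you explicitly flag the final interpolation and the treatment of small $d$ as unresolved, and these are precisely the places where the work is. In particular, the naive bound $\hom(C_{2(2k-d)},G)\le\Delta(G)^{2(k-d)}h$ would give $\Delta(G)^{k-d+1/2}$ rather than $\Delta(G)^{1/2}$, and your text does not supply the fix. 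So what you have written is a plausible plan, not a proof; since the paper itself supplies no proof either, the appropriate course is simply to cite Janzer as the paper does.
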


Moreover, we also use the well known fact that even cycles satisfy Sidorenko's conjecture \cite{Sidorenko}.

\begin{lemma}[\cite{Sidorenko}]\label{lemma:sidorenko}
For every graph $G$,
$$\hom(C_{2k},G)\geq d(G)^{2k}.$$
\end{lemma}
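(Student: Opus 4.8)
The statement to prove is Lemma~\ref{lemma:sidorenko}: for every graph $G$ on vertex set $V$ with $|V|=n$ and average degree $d=d(G)$, one has $\hom(C_{2k},G)\ge d^{2k}$. This is the special case of Sidorenko's conjecture for even cycles, which is classical and can be proved in an entirely self-contained way by repeated Cauchy--Schwarz on the adjacency matrix. The plan is to write $\hom(C_{2k},G)$ as a trace, diagonalise, and reduce to a power-mean inequality on the eigenvalues.

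\textbf{Key steps.} First, let $M$ be the $n\times n$ adjacency matrix of $G$ and recall $\hom(C_{2k},G)=\mathrm{tr}(M^{2k})=\sum_{i=1}^n \lambda_i^{2k}$, where $\lambda_1\ge\lambda_2\ge\cdots\ge\lambda_n$ are the eigenvalues of $M$. Similarly $\mathrm{tr}(M^2)=\sum_i \lambda_i^2 = 2e(G)=dn$ counts homomorphic closed walks of length $2$ (equivalently, ordered pairs of adjacent vertices). Second, I would apply the power-mean inequality (a single application of Jensen for the convex function $x\mapsto x^{k}$ to the nonnegative reals $\lambda_1^2,\dots,\lambda_n^2$):
\[
\sum_{i=1}^n \lambda_i^{2k} = \sum_{i=1}^n (\lambda_i^2)^k \ge \frac{1}{n^{k-1}}\left(\sum_{i=1}^n \lambda_i^2\right)^{k} = \frac{(dn)^k}{n^{k-1}} = d^k n.
\]
Since $d\le n$ always, we get $d^k n \ge d^k\cdot d^k = d^{2k}$, which is the desired bound. (In fact this gives the slightly stronger $\hom(C_{2k},G)\ge d^k n$.)

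\textbf{Alternative combinatorial route.} If one prefers to avoid eigenvalues, the same conclusion follows from tensor-power/Cauchy--Schwarz directly on the homomorphism counts. Writing $\hom(C_{2k},G)=\sum_{x_1,\dots,x_{2k}} \prod_{i} M_{x_i x_{i+1}}$ (indices mod $2k$) and using Cauchy--Schwarz in the variable $x_{2k}$, then telescoping around the cycle, one obtains $\hom(C_{2k},G)\ge \hom(C_2,G)^k/n^{k-1}=(dn)^k/n^{k-1}$ as above; this is the standard proof that even cycles are Sidorenko graphs and appears in \cite{Sidorenko}. Either way, the estimate is short.

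\textbf{Main obstacle.} There is essentially no obstacle of substance here --- the content is one application of the power-mean inequality --- so the only care needed is bookkeeping: making sure the trace identities $\mathrm{tr}(M^{2k})=\hom(C_{2k},G)$ and $\mathrm{tr}(M^2)=2e(G)=d n$ are stated correctly (including that homomorphic cycles are counted as ordered tuples with repetitions allowed, which is exactly what traces of powers of $M$ compute), and noting the elementary inequality $d\le n$ to pass from $d^k n$ to $d^{2k}$. Since Lemma~\ref{lemma:sidorenko} is only invoked downstream as a lower bound on $\hom(C_{2k},G)$, citing \cite{Sidorenko} together with this two-line derivation fully suffices.
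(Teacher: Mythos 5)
The paper does not prove this lemma at all --- it is quoted from \cite{Sidorenko} as a black box --- so the only question is whether your two-line derivation is correct, and it is not: the final step fails. From the power-mean inequality you correctly get $\hom(C_{2k},G)=\sum_i\lambda_i^{2k}\ge \bigl(\sum_i\lambda_i^2\bigr)^k/n^{k-1}=d^kn$, but your next sentence, ``since $d\le n$ we get $d^kn\ge d^k\cdot d^k=d^{2k}$,'' needs $n\ge d^k$, not $n\ge d$. This is not a repairable bookkeeping slip in the justification: the intermediate bound $d^kn$ is genuinely weaker than $d^{2k}$ whenever $d>n^{1/k}$ (e.g.\ for $G=K_n$ and $k=2$ one has $d^2n\approx n^3$ while the claimed lower bound is $d^4\approx n^4$), so your parenthetical remark that $d^kn$ is ``slightly stronger'' is also false, and the same defect afflicts your alternative Cauchy--Schwarz route, which again terminates at $(dn)^k/n^{k-1}=d^kn$.

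The fix within your spectral framework is shorter than what you wrote: since $2k$ is even, every term $\lambda_i^{2k}$ is nonnegative, so
\[
\hom(C_{2k},G)=\operatorname{tr}(M^{2k})=\sum_{i=1}^n\lambda_i^{2k}\ \ge\ \lambda_1^{2k}\ \ge\ d(G)^{2k},
\]
where the last inequality uses $\lambda_1\ge \frac{\mathbf{1}^\top M\mathbf{1}}{\mathbf{1}^\top\mathbf{1}}=\frac{2e(G)}{n}=d(G)$ (Rayleigh quotient with the all-ones vector). Alternatively, a purely combinatorial proof writes $\hom(C_{2k},G)=\sum_{u,v}W_k(u,v)^2\ge\bigl(\sum_{u,v}W_k(u,v)\bigr)^2/n^2$ by Cauchy--Schwarz, where $W_k(u,v)$ counts walks of length $k$ from $u$ to $v$, and then invokes the Blakley--Roy inequality $\sum_{u,v}W_k(u,v)\ge nd^k$; but that second ingredient is itself nontrivial, so the eigenvalue argument (or simply the citation, as the paper does) is the cleaner route.
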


\begin{proof}[Proof of Lemma~\ref{lemma:almost all cycles are nondegenerate}.]
We apply Lemma~\ref{lemma:janzer} and define the relation $\sim$ such that $u\sim v$ if and only if $u=v$. Then Lemma~\ref{lemma:janzer} gives a bound on the number of degenerate copies of $2k$-cycles. Since one can take $t=1$, the number of degenerate homomorphic $2k$-cycles in $G$ is at most $$32k^{3/2}\Delta(G)^{1/2}n^{\frac{1}{2k}}\hom(C_{2k}, G)^{1-\frac{1}{2k}}.$$

From Lemma \ref{lemma:sidorenko}, we have $\hom(C_{2k}, G)^{1/2k}\geq d(G)$. Furthermore, since $G$ is a $K$-almost-regular graph, we have $\Delta(G)\leq Kd(G)$. Thus, we have 
\begin{align*}\frac{32k^{3/2}\Delta(G)^{1/2}n^{\frac{1}{2k}}\hom(C_{2k}, G)^{1-\frac{1}{2k}}}{\hom(C_{2k}, G)}&=\frac{32k^{3/2}\Delta(G)^{1/2}n^{\frac{1}{2k}}}{\hom(C_{2k}, G)^{\frac{1}{2k}}}\leq 32k^{3/2}K^{1/2}\frac{d(G)^{1/2} n^{\frac{1}{2k}}}{d(G)}\\
&\leq 32k^{3/2}2^{3k/2+2}\frac{n^{\frac{1}{2k}}}{d(G)^{1/2}}\leq \frac{2^{2k+10}}{\sqrt{C}}.
\end{align*}
\end{proof}

Before we begin the proof of Theorem~\ref{thm:alternating cycles}, we present an auxiliary lemma which shows that if one has a sparse and a dense graph on the same vertex set, one can find a large independent set of the sparse graph in which the density of the dense graph does not decrease too much. This lemma will be applied to an auxiliary graph which is constructed in the course of the main proof.

\begin{lemma}\label{lemma:auxiliary graph lemma}
For every integer $t$ and every $c\in (0, 1)$, there exists $\delta=\delta(t,c)\in (0, 1)$ such that for all $n>c\delta^{-1}$ the following statement holds.
Let $V$ be a set of $n$ vertices and let $G_R, G_B$ be a red and a blue graph on $V$. If $G_R$ has at most $\delta n^2$ edges and $G_B$ has at least $cn^2$ edges, then there exists a set $S\subseteq V$ of size at least $t$ such that $G_B[S]$ has at least $\frac{c}{2}|S|^2$ edges and $G_R[S]$ is empty. 
\end{lemma}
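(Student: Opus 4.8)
The key realization is that no deletion step is needed: since $G_R$ is globally sparse, a \emph{uniformly random} subset $S\subseteq V$ of a suitable \emph{constant} size $\tau$ will typically contain no edge of $G_R$ at all, while the number of $G_B$-edges inside $S$ concentrates (in the weak, Markov-type sense that is all we need) around $c\tau^2$. So I would simply take $S$ random of the right constant size and use a union bound to show that with positive probability $S$ is simultaneously $G_R$-independent and $G_B$-dense.

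Concretely, I would set $\tau:=\max(t,4)$ and $\delta:=\delta(t,c):=\frac{c}{100\tau^2}$, so that the hypothesis $n>c\delta^{-1}$ becomes $n>100\tau^2$; let $S$ be a uniformly random $\tau$-element subset of $V$. For any fixed pair $\{u,v\}$ one has $\Pr[\{u,v\}\subseteq S]=\binom{\tau}{2}/\binom{n}{2}$, and since $\frac{\tau(\tau-1)}{n^2}\le \binom{\tau}{2}/\binom{n}{2}\le \frac{2\tau^2}{n^2}$ for $n\ge 2$, linearity of expectation gives
\[\E[e(G_R[S])]=e(G_R)\tfrac{\binom{\tau}{2}}{\binom{n}{2}}\le \delta n^2\cdot\tfrac{2\tau^2}{n^2}=2\delta\tau^2=\tfrac{c}{50},\qquad \E[e(G_B[S])]\ge c n^2\cdot\tfrac{\tau(\tau-1)}{n^2}=c\tau(\tau-1)\ge \tfrac34 c\tau^2,\]
where the last inequality uses $\tau\ge 4$. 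By Markov's inequality, $\Pr[e(G_R[S])\ge 1]\le \frac{c}{50}$.

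For the $G_B$ side I would run a reverse-Markov (averaging) argument, exploiting the deterministic upper bound $e(G_B[S])\le \binom{\tau}{2}<\tfrac12\tau^2$: writing $q:=\Pr\!\big[e(G_B[S])\ge \tfrac{c}{2}\tau^2\big]$, we get $\tfrac34 c\tau^2\le \E[e(G_B[S])]\le \tfrac{c}{2}\tau^2(1-q)+\tfrac12\tau^2 q$, which rearranges to $q\ge \frac{c}{2(1-c)}\ge \frac c2$. Since $\frac c2>\frac{c}{50}$, the union bound yields a strictly positive probability that $S$ contains no $G_R$-edge and satisfies $e(G_B[S])\ge \frac c2\tau^2=\frac c2|S|^2$; any such realization of $S$ (which has size $\tau\ge t$) proves the lemma.

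I do not expect a genuine obstacle here; the only point requiring care is that $\tau$ must be chosen as a \emph{constant} (depending on $t$ and $c$ but not on $n$), because it is precisely for constant $\tau$ that $\binom{\tau}{2}$ and $\tfrac12\tau^2$ agree closely enough that the trivial bound $e(G_B[S])\le\binom{\tau}{2}$ can be played off against the target $\frac c2|S|^2$ in the reverse-Markov step. Everything else is bookkeeping the constants so that the two failure probabilities ($c/50$ from $G_R$ and $1-q\le 1-c/2$ from $G_B$) leave room for a good $S$.
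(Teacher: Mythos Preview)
Your argument is correct. The calculations check out: the Markov bound on $e(G_R[S])$ and the reverse-Markov bound on $e(G_B[S])$ are both valid, the constants fit together, and the hypothesis $n>c\delta^{-1}=100\tau^2$ guarantees that a $\tau$-subset exists and that the ratio $\binom{\tau}{2}/\binom{n}{2}$ behaves as you claim.

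Your route is genuinely different from the paper's, and in fact cleaner. The paper samples each vertex independently with probability $p=\tfrac{c\delta^{-1}}{10n}$, proves the single inequality
\[
\E\big[e_B(S)\big]>\tfrac{c}{2}\,\E\big[|S|^2\big]+\E\big[|S|\cdot e_R(S)\big]+t^2,
\]
picks a realizing $S$, and then \emph{deletes} one endpoint of every red edge to obtain the final set $S'$; the extra $\E[|S|\cdot e_R(S)]$ term is there precisely to control the blue-edge loss in this deletion. You sidestep the deletion entirely by taking $S$ of \emph{fixed constant} size $\tau$, which makes the trivial upper bound $e(G_B[S])\le\binom{\tau}{2}$ tight enough to run the reverse-Markov step directly, and then using a union bound to get a set that is already red-free. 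The payoff of your approach is simplicity (no cleanup, fewer moment computations); the paper's approach is a bit more flexible in that the sampling--then--delete template still works in settings where one cannot make the random set red-free outright.
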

\begin{proof}
Let $S$ be a random subset of $V$ which includes each element with probability $p=\frac{c\delta^{-1}}{10n}$. Furthermore, denote by $e_R(S)$ and $e_B(S)$ the number of red and blue edges in $S$, respectively. 

The first step of the proof is to show that we have the following inequality between expectations
\begin{equation}\label{eqn:ineq expectation}
\bE\big[e_B(S)\big]>\frac{c}{2}\bE\big[|S|^2\big] + \bE\big[|S|\cdot e_R(S)\big] + t^2.
\end{equation}
 For a vertex $u\in V$, we define $\mathbf{1}_u$ to be the indicator function of the event $u\in S$. Let us begin by computing the expectation of $e_B(S)$:
\[\bE[e_B(S)]=\bE\bigg[\sum_{uv\in E(G_B)}\mathbf{1}_u\mathbf{1}_v\bigg]=\sum_{uv\in E(G_B)} \bE[\mathbf{1}_u\mathbf{1}_v]=e(G_B)p^2.\]
Similarly, one can compute the expectation of $|S|^2$ and $|S|\cdot e_R(S)$:
\begin{align*}
\bE\big[|S|^2\big]=\bE\bigg[\Big(\sum_{u\in V}\mathbf{1}_u\Big)^2\bigg]= \bE\bigg[\sum_{u, v\in V}\mathbf{1}_u\mathbf{1}_v\bigg]&=\sum_{u\neq v}\bE[\mathbf{1}_u\mathbf{1}_v]+\sum_{u\in V} \bE[\mathbf{1}_u]=(n^2-n)p^2+np,\\
\bE\big[|S|\cdot e_R(S)\big]=\bE\bigg[\sum_{u,v\in E(G_R)}\mathbf{1}_u\mathbf{1}_v\sum_{w\in V}\mathbf{1}_w\bigg]&=\sum_{\substack{uv\in E(G_R)\\ w\neq u,v}} \bE[\mathbf{1}_u\mathbf{1}_v\mathbf{1}_w]+\sum_{\substack{uv\in E(G_R)\\ w\in \{u,v\}}}\bE[\mathbf{1}_u\mathbf{1}_v]\\
&=e(G_R)(n-2)p^3+2e(G_R)p^2.
\end{align*}
Having computed these, it is straightforward to verify inequality~(\ref{eqn:ineq expectation}):
\begin{align*}
\frac{c}{2}\bE\big[|S|^2\big] + \bE\big[|S|\cdot e_R(S)\big] + t^2&\leq \frac{c}{2}(np)^2+\frac{c}{2}np+\delta n^3p^3+2\delta n^2p^2+t^2 \\
&= \frac{c^3\delta^{-2}}{200}+\frac{c^2\delta^{-1}}{20}+\frac{c^3\delta^{-2}}{1000}+\frac{2c^2\delta^{-1}}{100}+t^2< \frac{c^3\delta^{-2}}{100} \leq e(G_B) p^2,
\end{align*}
assuming $\delta$ is sufficiently small with respect to $t$ and $c$. Therefore, one can choose $S$ such that $e_B(S)> \frac{c}{2}|S|^2 + |S|e_R(S) + t^2$. Since $e_B(S)<\frac{|S|^2}{2}$ for all $S$, we must also have $e_R(S)<|S|/2$. As there are fewer red edges in $S$ than vertices, one can remove one vertex from every red edge and obtain a non-empty set of vertices $S'$ of cardinality $|S'|\geq |S|-e_R(S)$.

We claim that $|S'|\geq t$ and that $S'$ contains at least $\frac{c}{2}|S|^2$ blue edges. Showing both of these inequalities is sufficient to complete the proof by choosing the set $S'$. The first inequality follows from $\frac{|S|^2}{2}>e_B(S)>|S|e_R(S) + t^2$, which implies $$\frac{1}{2}|S'|^2\geq \frac{1}{2}(|S|-e_R(S))^2=\frac{|S|^2}{2}-|S|e_R(S)+\frac{e_R(S)^2}{2}\geq t^2.$$ On the other hand, by removing at most $e_R(S)$ vertices of $S$, one can remove at most $e_R(S)|S|$ blue edges from $S$, meaning that at least $e_B(S)-|S|e_R(S)\geq \frac{c}{2}|S|^2$ blue edges remain in $S'$. This suffices to complete the proof.
\end{proof}

Having covered all the preliminaries, we are ready for the proof of the main theorem.

\begin{proof}[Proof of Theorem~\ref{thm:alternating cycles}.]
Let us begin by applying Lemma~\ref{lemma:passing to an almost regular subgraph} with $\alpha=\frac{1}{k}$ to the graph $G_1$ of edges crossing the partition $(A, B)$. Let $C_0=\frac{C}{4}$ and $K=2^{3k+5}$, then there is a $K$-almost-regular induced subgraph $H_1$ of $G_1$ satisfying $e(H_1)\geq C_0m^{1+1/k}$, where $m$ is the number of vertices of $H_1$. Also, let $H$ be the subgraph of $G$ induced by $V(H_1)$.

For any two distinct vertices $u, v\in V(H)$, we denote by $\cP_{u, v}$ the set of alternating paths of length $k$ between vertices $u$ and $v$ in $H$, and let $P_{u, v}=|\cP_{u, v}|$. Furthermore, let $A_{u, v}$ denote the number of ordered pairs of paths $(P_1, P_2)\in \cP_{u, v}^2$ which intersect in a vertex different from $u, v$. Finally, let $B_{u, v}$ denote the number of ordered pairs of distinct paths $(P_1, P_2)\in \cP_{u, v}^2$ such that some internal vertex of $P_1$ and some internal vertex of $P_2$ are connected by an edge of $H$.

Let us begin by presenting a set of simple observations about these quantities. Since every pair of paths with the same endpoints can be glued to form a homomorphic $2k$-cycle, we have that $\sum_{u,v} P_{u, v}^2\leq \hom(C_{2k}, H_1)$. Here and later, $\sum_{u,v}$ denotes the sum over all pairs $(u,v)\in V(H)^2$, $u\neq v$. Furthermore, every non-degenerate $2k$-cycle $(x_1, \dots, x_{2k})$ can be uniquely partitioned into two paths of length $k$ with the same endpoints, namely the paths $(x_1, x_2, \dots, x_{k+1})$ and $(x_1, x_{2k}, \dots, x_{k+1})$. Hence, the sum $\sum_{u,v} P_{u, v}^2$ is at least the number of non-degenerate $2k$-cycles. For sufficiently large $C_0$, Lemma~\ref{lemma:almost all cycles are nondegenerate} guarantees that at least half of all homomorphic $2k$-cycles in $H_1$ are nondegenerate, and so $\sum_{u, v} P_{u, v}^2\geq \frac{1}{2}\hom(C_{2k}, H_1)$.

The sum $\sum_{u,v} A_{u, v}$ can be bounded by the number of degenerate $2k$-cycles, since each pair of paths which share the endpoints and intersect in some internal vertex corresponds to a degenerate $2k$-cycle. Thus, Lemma~\ref{lemma:almost all cycles are nondegenerate} implies $\sum_{u, v} A_{u, v}\leq \frac{2^{2k+10}}{\sqrt{C_0}}\cdot \hom(C_{2k}, H_1)$.

Finally, if $H$ has no induced nondegenerate $2k$-cycles, then $\sum_{u, v} B_{u, v}\geq \frac{1}{4k} \hom(C_{2k}, H_1)$. To see why, note that every nondegenerate cycle $(x_1, \dots, x_{2k})$ must have a chord, since it is not induced. Then, there exists a way to shift the indices of the vertices, say by setting $y_i=x_{i+t\bmod 2k}$ for some $t$, such that the chord goes between the vertex sets $\{y_2, \dots, y_{k}\}$ and $\{y_{k+2}, \dots, y_{2k}\}$. In this case, the cycle $(y_1, \dots, y_{2k})$ corresponds to a pair of paths between $y_1$ and $y_{k+1}$ with an edge between their internal vertices, which is counted in $B_{y_1, y_{k+1}}$. This argument shows that for any non-degenerate $2k$-cycle, one of its $2k$ cyclic relabeling corresponds to a pair of paths counted in the sum $\sum_{u, v} B_{u, v}$. Hence, 
$2k\sum_{u, v} B_{u, v}$ is at least the number of nondegenerate $2k$-cycles, and so $\sum_{u, v} B_{u, v}\geq \frac{1}{4k}\cdot \hom(C_{2k}, H_1)$.

Combining these observations with the fact that $\hom(C_{2k}, H_1)\geq d(H_1)^{2k}\geq m^2 C_0^{2k}\geq \sum_{u, v}C_0^{2k}$, which comes from Lemma \ref{lemma:sidorenko}, we obtain the following inequality:
\[8k\sum_{u,v}B_{u, v}-\frac{\sqrt{C_0}}{2^{2k+10}} \sum_{u,v} A_{u, v}\geq \hom(C_{2k}, H_1)\geq \frac{1}{2}d(H_1)^{2k}+\frac{1}{2}\sum_{u,v}P_{u, v}^2\geq \frac{1}{2}\sum_{u, v}(P_{u, v}^2+C_0^{2k}).\]
Hence, there exist distinct vertices $u, v$ for which $$8kB_{u, v}\geq \frac{\sqrt{C_0}}{2^{2k+10}} A_{u, v}+\frac{1}{2}P_{u, v}^2+\frac{1}{2}C_0^{2k}.$$ Let us fix these vertices and define $\delta=2^{2k+10}\cdot {8k}/{\sqrt{C_0}}$. Since $B_{u, v}$ counts certain pairs of paths in $\cP_{u, v}$, we always have $B_{u, v}\leq P_{u, v}^2$. Thus, we  deduce that $8k\delta^{-1} A_{u, v}\leq 8kB_{u, v}\leq 8kP_{u, v}^2$ and so $A_{u, v}\leq \delta P_{u, v}^2$. Also, we directly have both $B_{u, v}\geq \frac{1}{16k}P_{u, v}^2$ and $8kP_{u, v}^2\geq 8kB_{u, v}\geq \frac{1}{2}C_0^{2k}$, implying $P_{u, v}\geq C_0^k/4\sqrt{k}$.

We now define two auxiliary graphs, a red and a blue one, on the set of vertices $\cP_{u, v}$. A pair of distinct paths $P_1, P_2\in \cP_{u, v}$ is a red edge if the paths $P_1, P_2$ intersect in some of their internal vertices. Also, $P_1P_2$ is a blue edge if there exists an edge of $H$ between the internal vertices of $P_1$ and $P_2$. Let us denote the red graph by $G_R$ and the blue graph by $G_B$. Note that an ordered pair of paths $(P_1, P_2)$ is counted in $A_{u, v}$ precisely if $P_1P_2$ is a red edge or if $P_1=P_2$. Thus, we have that the number of red edges is precisely $e(G_R)=(A_{u, v}-P_{u, v})/{2}\leq \delta P_{u,v}^2$. Similarly, we have $e(G_B)=B_{u, v}/{2}\geq \frac{1}{32k} P_{u, v}^2$, since $B_{u, v}$ only counts pairs of distinct paths. 

Apply Lemma~\ref{lemma:auxiliary graph lemma} to the graphs $G_R, G_B$ with the parameters $n=P_{u,v}$, $c=\frac{1}{32k}$, and $t=(64k)^{3s}$. The conditions of the Lemma~\ref{lemma:auxiliary graph lemma} hold when $C_0$ is sufficiently large, since $e(G_R)\leq \delta P_{u, v}^2$ and $P_{u, v}>c\delta^{-1}$ as $P_{u, v}\geq C_0^k/4\sqrt{k}$. We conclude that there exist $r\geq t$ paths $P_1, \dots, P_r\in \cP_{u, v}$ with disjoint interiors and with at least $\frac{1}{64k} r^2$ pairs $P_i, P_j$ having an edge of $H$ between their internal vertices.

Let us now consider an induced subgraph $F\subseteq H$ on the union of vertices of all these paths. The number of vertices of $F$ is $h=2+r(k-1)$ and the number of edges in $F$ is at least $\frac{1}{64k} r^2$. However, $F$ is a $K_{s, s}$-free graph and the K\H{o}v\'ari–S\'os–Tur\'an theorem implies that $e(F)\leq (s/h)^{1/s}h^2+hs/2$. Since $r\geq t=(64k)^{3s}$ and $h\leq rk$, we have $\frac{1}{64k}r^2>(s/h)^{1/s}h^2+hs/2$, a contradiction. This completes the proof.
\end{proof}

\subsection{The cube graph}

We finish the section by showing Proposition~\ref{prop:cube}, which states that $\ex^*(n, Q_8, s)\leq O_s(n^{8/5})$, where $Q_8$ denotes the graph of a three-dimensional cube. Before we start the proof, we show that almost all paths of a given length in a $K_{s, s}$-free graph are induced. Although we need this statement only for paths of length three, we prove it in full generality since we believe it might be of independent interest.

\begin{lemma}\label{lemma:many induced paths}
For any integers $k, s\geq 2$ and any $K, \eps>0$, there exists a constant $C=C(k, s, K, \eps)>0$ such that the following statement is true. Let $G$ be a graph on $n$ vertices which does not contain $K_{s, s}$ and let $V(G)=A\cup B$ be a partition of the graph $G$ with at least $e(A, B)\geq Cn^{1+\frac{1}{k}}$ crossing edges. Furthermore, assume that the graph of crossing edges is $K$-almost regular. Then, at least a $(1-\eps)$-fraction of alternating paths of length $k$ in $G$ are induced. 
\end{lemma}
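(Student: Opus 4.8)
\emph{Overview.} My plan is to pass to \emph{homomorphic alternating walks} of length $k$ --- sequences $(v_0,v_1,\dots,v_k)$ in which every edge $v_{i-1}v_i$ is a crossing edge --- and show that all but an $\eps$-fraction of them are both non-degenerate (all $v_i$ distinct) and chordless (no edge of $G$ between $v_i$ and $v_j$ with $|i-j|\ge 2$); such a walk is exactly an induced alternating path. Write $G_1$ for the ($K$-almost-regular) graph of crossing edges; discarding isolated vertices we may assume $\delta:=\delta(G_1)\ge 1$, so every degree lies in $[\delta,\Delta]$ with $\Delta:=\Delta(G_1)\le K\delta$, and $\delta\ge d/K$ where $d:=2e(A,B)/|V(G_1)|\ge 2Cn^{1/k}\ge 2C$. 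The number $N$ of homomorphic alternating walks of length $k$ satisfies $n\delta^k\le N\le n\Delta^k$. Since non-degenerate walks are exactly the alternating paths and chordless non-degenerate walks are exactly the induced alternating paths, it suffices to bound the number $D$ of degenerate walks and the number $R$ of walks containing a chord, each by $\tfrac{\eps}{4}N$. As $\delta$ can be made as large as we like by taking $C$ large in terms of $k,s,K,\eps$, it is enough to prove $D,R=O_{k,s,K}(\delta^{-c})\cdot n\Delta^k$ for some absolute $c>0$.

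\emph{Degenerate walks and same-part chords.} For fixed positions $i<j$, splitting a walk with $v_i=v_j$ into a length-$i$ backward walk at $v_i$, a closed walk of length $j-i$ at $v_i$, and a length-$(k-j)$ forward walk at $v_j$ gives at most $n\Delta^{k-1}$ such walks, so $D\le k^2 n\Delta^{k-1}$, which is an $O_{k,K}(1/\delta)$-fraction of $N$. Now suppose the chord $v_iv_j\in E(G)$ has $v_i,v_j$ in the same class, say both in $A$, and put $\ell=j-i$ (even, $\ge 2$). Choosing the backward and forward extensions costs at most $\Delta^i$ and $\Delta^{k-j}$; grouping the middle sub-walks $v_i=x_0,\dots,x_\ell=v_j$ by the pair $(x_1,x_{\ell-1})\in B\times B$ (both in $B$ since $v_i,v_j\in A$), their number is $\sum_{y,y'\in B}\#\{(a,b)\in N_{G_1}(y)\times N_{G_1}(y'):ab\in E(G)\}\cdot(\#\text{ walks of length }\ell-2\text{ from }y\text{ to }y')$. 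The key point is that $N_{G_1}(y)\cup N_{G_1}(y')\subseteq A$ has size at most $2\Delta=O_K(d)$ and induces a $K_{s,s}$-free subgraph of $G$, so the first factor is $O_{s,K}(d^{2-1/s})$ by the K\H{o}v\'ari--S\'os--Tur\'an theorem. Summing, and using that there are at most $n\Delta^{\ell-2}$ walks of length $\ell-2$, the number of such walks is $O_{s,K}(d^{2-1/s})\cdot n\Delta^{\ell-2}\cdot\Delta^{i+k-j}=O_{k,s,K}(nd^{k-1/s})$, so after summing over positions and over both classes this is an $O_{k,s,K}(\delta^{-1/s})$-fraction of $N$.

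\emph{Cross-part chords (the crux).} If the chord $v_iv_j\in E(G)$ joins different classes, then $v_iv_j\in E(G_1)$ and $\ell=j-i$ is odd, so $v_i,v_{i+1},\dots,v_j,v_i$ is a homomorphic cycle of length $\ell+1$ (even) in $G_1$; bounding the extensions as before, the number of such walks is at most $\Delta^{i+k-j}\hom(C_{\ell+1},G_1)$. Everything thus reduces to the estimate $\hom(C_{2m},G_1)=O_{m,s,K}(nd^{2m-1-c})$ for some $c=c(s)>0$ and all $2\le m\le k$. For $m=2$, $\hom(C_4,G_1)=\sum_{u\ne w}\cod(u,w)^2+O_K(nd^2)$, and the $K_{s,s}$-freeness of $G_1$ together with $\Delta\le Kd$ gives $\sum_{\{u,w\}}\binom{\cod(u,w)}{s}=O_{s,K}(nd^s)$ (for each vertex $u$ and each $s$-subset of $N(u)$ there are at most $s-1$ other vertices containing it); hence at most $O_{s,K}(nd^s/T^s)$ pairs have codegree at least $T$, and cutting the sum at $T=d^{s/(s+1)}$ while using $\sum_{u\ne w}\cod(u,w)=\sum_z\deg(z)^2\le Knd^2$ yields $\hom(C_4,G_1)=O_{s,K}(nd^{3-1/(s+1)})$. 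For larger $m$, the Cauchy--Schwarz inequality $\hom(C_{2m},G_1)=\operatorname{tr}(A^{2m})\le\Delta^2\hom(C_{2m-2},G_1)$ propagates this to $\hom(C_{2m},G_1)=O_{m,s,K}(nd^{2m-1-1/(s+1)})$. Plugging in, the walks with a cross-part chord form an $O_{k,s,K}(\delta^{-1/(s+1)})$-fraction of $N$.

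\emph{Conclusion.} Combining the three estimates, $D+R=O_{k,s,K}(\delta^{-1/(s+1)})\cdot N$, which drops below $\tfrac{\eps}{2}N$ once $C$ (hence $\delta$) is large enough in terms of $k,s,K,\eps$; the lemma follows. I expect the cross-part case to be the main obstacle: it needs $\hom(C_{2m},G_1)=o(nd^{2m-1})$ for $K$-almost-regular $K_{s,s}$-free graphs, and both hypotheses are genuinely used (the complete bipartite graph shows the statement fails without $K_{s,s}$-freeness, and a single high-degree vertex shows almost-regularity cannot be dropped). A cleaner derivation of this cycle-count estimate, perhaps via Lemma~\ref{lemma:janzer}, would streamline the argument.
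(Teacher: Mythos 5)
Your proof is correct, but it takes a genuinely different route from the paper's. The paper counts alternating paths with a chord at a fixed pair of positions $(i,j)$, completes each chorded path to a cycle of length $\ell=j-i+1$ with at most one non-crossing edge, and then pigeonholes a second time: deleting the chord and the two adjacent edges leaves an alternating $(\ell-3)$-path, and some such path $P$ with endpoints $u,v$ is contained in $\Omega(d^2)$ of these cycles; this forces $\Omega(d^2)$ edges of $G$ inside $N_{G_1}(u)\cup N_{G_1}(v)$, a set of size $O_K(d)$, contradicting the K\H{o}v\'ari--S\'os--Tur\'an theorem. This single argument handles same-part and cross-part chords uniformly. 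You instead work with homomorphic alternating walks and split into three cases. Your degenerate-walk count is elementary, and your same-part-chord bound is essentially the paper's endgame in averaged form (K\H{o}v\'ari--S\'os--Tur\'an applied to $N_{G_1}(y)\cup N_{G_1}(y')$ summed over all pairs $(y,y')$ rather than to a single extremal pair produced by pigeonhole). Your cross-part case is the real departure: you reduce to the estimate $\hom(C_{2m},G_1)=O_{m,s,K}\bigl(nd^{2m-1-1/(s+1)}\bigr)$ for $K$-almost-regular $K_{s,s}$-free graphs, which you prove for $m=2$ by a codegree moment bound ($\sum\binom{\cod(u,w)}{s}=O_{s,K}(nd^s)$, then a threshold cut at $T=d^{s/(s+1)}$) and propagate by $\operatorname{tr}(A^{2m})\le\|A\|_{\mathrm{op}}^2\operatorname{tr}(A^{2m-2})\le\Delta^2\hom(C_{2m-2},G_1)$. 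This cycle-count estimate does not appear in the paper's proof of this lemma, though it is in the same circle of ideas as Lemma~\ref{lemma:janzer}, which the paper uses elsewhere (for Theorem~\ref{thm:alternating cycles}) but not here. Overall the paper's route is shorter since one pigeonhole and one K\H{o}v\'ari--S\'os--Tur\'an application handle all chords simultaneously, while yours is more modular and isolates a clean almost-regular $K_{s,s}$-free cycle-count bound of independent interest.
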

\begin{proof}
We denote the graph of edges crossing the partition by $G_1$. We show that if at least an $\eps$-fraction of alternating paths of length $k$ in $G$ are not induced, then $G$ has $\Omega(nd(G_1)^{\ell-1})$ cycles of length $\ell$, for some $\ell\leq k$, with all but at most one edge crossing the partition. This suffices to show that $G$ contains $K_{s, s}$ as a subgraph.

Let us begin by showing that the total number of alternating $k$-paths in $G$ is at least $\frac{1}{2}n (\delta(G_1)/2)^k$. Namely, to specify an alternating path of length $k$, one has $n$ choices for the first vertex and at least $\delta(G_1)-k\geq \delta(G_1)/2$ choices for each of the subsequent vertices. However, since we may count every path twice, depending on the direction it is traversed, we divide by $2$. 

If an $\eps$-fraction of these paths are not induced, we have $\eps n (\delta(G_1)/2)^k/2 \geq \eps\frac{nd^k}{K^k2^{k+1}}=\Omega_{k, K, \eps}(n d^k)$ non-induced alternating paths of length $k$, where $d=d(G_1)$ is the average degree of $G_1$. In other words, there are at least $\Omega_{k, K, \eps}(n d^k)$ $k$-paths with a chord between some two vertices. By the pigeonhole principle, there exist indices $i, j\in \{0, \dots, k\}$ such that $i\leq j-2$ and at least $\frac{1}{k^2}\Omega_{k, K, \eps}(n d^k)$ of these paths have a chord between the vertices $v_i$ and $v_j$. Let us denote this collection of paths by $\cP_{ij}$.

Every path in $\cP_{ij}$ can be completed to a cycle of length $\ell=j-i+1$. Let us denote by $\cC_\ell$ the collection of $\ell$-cycles in $G$ with all but at most one edge crossing the partition. We argue that each cycle $C\in \cC_\ell$ can be obtained from at most $\ell\Delta(G_1)^{k-(j-i)}$ paths $P\in \cP_{ij}$. Given a cycle $C$, there are $\ell$ ways to label its vertices using the labels $v_i, \dots, v_j$. Furthermore, there are at most $\Delta^{i}$ ways to choose vertices $v_{i-1}, v_{i-2}, \dots, v_0$ with the restrictions $v_iv_{i-1}\in E(G_1), v_{i-1}v_{i-2}\in E(G_1)$ etc. Similarly, there are at most $\Delta^{k-j}$ ways to choose the vertices $v_{j+1}, \dots, v_k$ and therefore at most $\ell \Delta^{k-(j-i)}$ paths $P\in \cP_{ij}$ contain the cycle $C$. We conclude that $|\cC_\ell|\geq \frac{|\cP_{ij}|}{\ell\Delta^{k-\ell+1}}=\Omega_{k, K, \eps}(nd^{\ell-1})$.

The last step of the proof is to show that $|\cC_\ell|\geq \Omega_{k, K, \eps}(nd^{\ell-1})$ implies that $G$ contains $K_{s, s}$ as a subgraph. Let $\cP_{\ell-3}$ denote the collection of alternating paths of length $\ell-3$. Since at most one edge of any cycle $C\in \cC_\ell$ does not cross the partition, $C$ contains an alternating path of length $\ell-1$. Let us take this alternating path and eliminate its first and last edge to obtain an alternating path of length $\ell-3$, which we assign to the cycle $C$. Since there are at most $n\Delta^{\ell-3}$ alternating paths of length $\ell-3$, the pigeonhole principle implies that there exists a path $P\in \cP_{\ell-3}$ which is assigned to at least $\frac{|\cC_\ell|}{n\Delta^{\ell-3}}=\Omega_{k, K, \eps}(d^2)$ cycles of $\cC_\ell$.

Let us denote the endpoints of this path by $u$ and $v$. The number of ways to complete the path $P$ to a cycle in $\cC_\ell$ is upper bounded by the number of edges between $N_{G_1}(u)$ and $N_{G_1}(v)$. Both of these sets have size at most $\Delta\leq Kd$ and the number of edges in $N_{G_1}(u)\cup N_{G_1}(v)$ is at least $\Omega_{k, K, \eps}(d^2)$. On the other hand, if $G$ is $K_{s, s}$-free, the K\H{o}v\'ari-S\'os-Tur\'an theorem  implies that $N_{G_1}(u)\cup N_{G_1}(v)$ induces at most $O_s((2Kd)^{2-1/s})$ edges, which is not possible if $d$ is large enough. Hence, we conclude that $G$ contains $K_{s, s}$ as a subgraph.
\end{proof} 

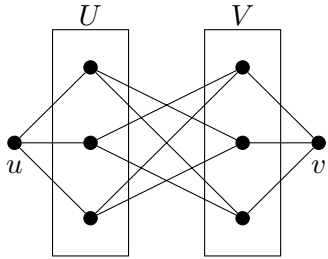
\begin{figure}[ht]
    \begin{center}
	\begin{tikzpicture}
            \node[vertex,label=below:$u$] (u) at (-2,0) {};
            \node[vertex,label=below:$v$] (v) at (2,0) {};
            \node[vertex] (a) at (-1,1) {};
            \node[vertex] (b) at (-1,0) {};
            \node[vertex] (c) at (-1,-1) {};
            \node[vertex] (x) at (1,1) {};
            \node[vertex] (y) at (1,0) {};
            \node[vertex] (z) at (1,-1) {};

            \draw (u) edge (a);
            \draw (u) edge (b);
            \draw (u) edge (c);
            \draw (v) edge (x);
            \draw (v) edge (y);
            \draw (v) edge (z);

            \draw (a) -- (y) -- (c) -- (x) -- (b) -- (z) -- (a);

            \draw (-1.5, -1.5) rectangle (-0.5, 1.5) ; \node at (-1, 1.7) {$U$};
            \draw (0.5, -1.5) rectangle (1.5, 1.5) ; \node at (1, 1.7) {$V$};
        \end{tikzpicture}
    \end{center}
    \caption{Illustration of the proof of Proposition~\ref{prop:cube}.}
    \label{fig1}
\end{figure}

We are now ready to prove the upper bound on the induced Tur\'an number of the cube.

\begin{proof}[Proof of Proposition~\ref{prop:cube}.]
Our goal is to  find a pair of non-adjacent vertices $u, v$ with a large number of induced paths of length three between them. Then, we find an alternating induced cycle of length $6$ between the neighbourhoods of $u$ and $v$, which suffices to find the graph of the cube as depicted in Figure~\ref{fig1}.

Let $G$ be an $n$ vertex graph with average degree at least $Cn^{3/5}$ containing no $K_{s, s}$. Partition the vertex set of $G$ into parts $A, B$ such that at least half of the edges cross the partition, and let $G_1$ be the graph of crossing edges. By Lemma~\ref{lemma:passing to an almost regular subgraph} applied with $\alpha=\frac{3}{5}$, $G_1$ has a $K$-almost-regular induced subgraph $H_1$ with $m$ vertices and average degree $d$ satisfying $d\geq C_0m^{3/5}$, where $C_0=\frac{C}{4}$ and $K=2^{10}$. Let $H$ be the subgraph of $G$ induced on $V(H_1)$.

By Lemma~\ref{lemma:many induced paths}, the graph $H$ contains $\Omega(md^3)$ alternating induced paths of length $3$. By the pigeonhole principle, there exists a pair of vertices $u, v$, which have at least $\Omega(md^3/m^2)=\Omega(d^3/m)$ induced alternating paths of length $3$ between them. In particular, the vertices $u$ and $v$ are non-adjacent.

Let us denote the collection of these paths by $\cP$, the set of all neighbours of $u$ on these paths by $U$ and the set of neighbours of $v$ on these paths by $V$. Since all paths are induced, $U\cap N(v)=\emptyset$ and $V\cap N(u)=\emptyset$. Moreover, the number of induced alternating paths of length $3$ between $u$ and $v$ is equal to the number of edges between $U$ and $V$, meaning that $e(U, V)=\Omega(d^3/m)$. On the other hand, the cardinality of $U$ and $V$ is at most $\Delta(H_1)\leq Kd$. 

We claim that if $C_0$ is sufficiently large, one has $e(U, V)\geq C_0(|U|+|V|)^{4/3}$, which allows us to apply Theorem~\ref{thm:alternating cycles} with $k=3$. To verify that there are sufficiently many edges between $U$ and $V$, we recall that $d\geq C_0 m^{3/5}$, which implies
\[e(U, V)=\Omega(d^3/m)\geq \Omega\left(d^{4/3} C_0^{5/3}\right)\geq C_0\cdot (2Kd)^{4/3}\geq C_0(|U|+|V|)^{4/3}.\]
Thus, one can find an alternating induced cycle of length $6$ between $U$ and $V$. But the vertices $u$ and $v$, together with this cycle, form an induced copy of $Q_8$, which completes the proof.
\end{proof}

\section{Concluding remarks}

In this paper, we proposed a framework which unifies the study of Tur\'an-type problems with the study of induced subgraphs. We proved upper bounds on $\ex^*(n, H, s)$ with the same asymptotic behavior as the best known upper bounds on the usual Tur\'an number $\ex(n, H)$ for several natural classes of bipartite graphs, e.g. when $H$ is a tree, cycle or has degrees on one side bounded by $k$. Let us repeat our conjecture that a similar result should hold for every bipartite graph $H$.

\mainconjecture*

An obvious difficulty in the resolution of this conjecture is that we do not even know the extremal numbers of most bipartite graphs. However, it is plausible that this can be circumvented by some clever argument. It would be also interesting to prove or disprove this conjecture in case $H$ is replaced with a family of bipartite graphs $\mathcal{H}$. As we discussed in the introduction, sharp bounds are known in case $\mathcal{H}$ is the family of subdivisions of a given graph $H$.

A curious problem left open is about the family of graphs of VC-dimension at most $d$, which is of great interest due to its connection to geometry. This family can be defined  as the family of graphs containing no induced member of the following finite collection of graphs $\cH$. The collection $\cH$ contains all graphs $H$ with a partition $A\cup B$ such that $|A|=d+1$, $|B|=2^{d+1}$, and for every $X\subset A$ there is a unique $b\in B$ such that $b$ is connected to all vertices in $X$, but no vertices in $A\setminus X$. For $d\geq 3$, the best known upper bound is $\ex^{*}(n,\cH,s)=o(n^{2-1/d})$ due to Janzer and Pohoata \cite{JP}. However, we believe that $\ex^{*}(n,\cH,s)=O(n^{2-1/d-\delta})$ should also hold for some $\delta=\delta(d)>0$. It seems the main obstacle in proving this is that the best known bound for the ordinary Tur\'an number is also $\ex(n,\cH)=o(n^{2-1/d})$, following from a result of Sudakov and Tomon \cite{ST20} as well.

\medskip
\noindent
\textbf{Acknowledgments.} We thank anonymous referees for numerous thoughtful comments and the observation that the original statement of the Conjecture~\ref{conj:mainconjecture} had a trivial counterexample, if $H$ is allowed to be disconnected. We note that a similar observation was made by Sheffield \cite{S24} too.

\end{document}